\title{Uniform interpolation for interpretability logic}
\author{Sebastijan Horvat,\\
  {\small Department of Mathematics, Faculty of Science, University of Zagreb,
  Croatia}, \and
Borja Sierra Miranda,\\
{\small University of Bern, Switzerland},\footnote{Research supported by the Swiss National Science Foundation project 200021\_214820.}\and
Thomas Studer,\\
{\small University of Bern, Switzerland}.}
\date{}
\newtheorem{theorem}{Theorem}[section]
\newtheorem{lemma}[theorem]{Lemma}
\newtheorem{corollary}[theorem]{Corollary}
\newtheorem{proposition}[theorem]{Proposition}
\theoremstyle{definition}
\newtheorem{definition}[theorem]{Definition}
\newcommand\restricts{\mathord{\upharpoonright}}
\DeclareMathOperator\vocab{Voc}
\newcommand\interp{\mathbin{\rhd}}
\newcommand\nec{\Box}
\newcommand\bnec{\mathop{\rule[0.5pt]{6pt}{6pt}}}
\newcommand\dnec{\boxdot}
\newcommand\pos{\Diamond}
\newcommand\IL{\mathsf{IL}}
\newcommand\IKT{\mathsf{IK4}}
\newcommand\ILP{\mathsf{ILP}}
\newcommand\ILM{\mathsf{ILM}}
\newcommand\GL{\mathsf{GL}}
\newcommand\domain{\mathop{\mathrm{Dom}}}
\newcommand\image{\mathop{\mathrm{Im}}}
\newcommand\fgentzenIL{\mathcal{G}\IL}
\newcommand\gentzenIL{\mathcal{G}^\infty \IL}
\newcommand\auxgentzenIL{\mathcal{G}^\infty \IL'}
\newcommand\cgentzenIL{\mathcal{G}^\circ \IL}
\newcommand\slimgentzenIL{\mathcal{G}^{\mathrm{slim}}\IL}
\newcommand\toWff[1]{{#1}^\sharp}
\newcommand\cut{\mathrm{Cut}}
\DeclareMathOperator\wk{wk}
\DeclareMathOperator\ctr{Ctr}
\newcommand\ax{\mathrm{ax}}
\newcommand\Ax{\mathrm{Ax}}
\newcommand\botL{\bot\mathrm{L}}
\newcommand\botR{\bot\mathrm{R}}
\newcommand\toL{{\to}\mathrm{L}}
\newcommand\toR{{\to}\mathrm{R}}
\newcommand\negL{{\neg}\mathrm{L}}
\newcommand\negR{{\neg}\mathrm{R}}
\newcommand\veeL{{\vee}\mathrm{L}}
\newcommand\veeR{{\vee}\mathrm{R}}
\newcommand\wedgeL{{\wedge}\mathrm{L}}
\newcommand\wedgeR{{\wedge}\mathrm{R}}
\newcommand\interpIKT{\interp_{\IKT}}
\newcommand\interpIL{\interp_{\IL}}
\newcommand\emp{\mathrm{Empty}}
\newcommand\rep{\mathrm{Repeat}}
\newcommand\ninv{\mathrm{\interp^*_{\IKT}}}
\newcommand\Wk{\mathrm{Wk}}
\newcommand\Ctr{\mathrm{Ctr}}
\newcommand\Eq{\mathrm{Eq}}
\DeclareMathOperator\height{hg}
\DeclareMathOperator\lheight{lhg}
\newcommand\sub{\mathop{\mathrm{Sub}}\nolimits_{\interp}}
\DeclareMathOperator\nodes{Node}
\newcommand\an{\mathrm{an}}
\newcommand\su{\mathrm{su}}
\begin{document}

\maketitle

\begin{abstract}
  We present a proof-theoretical study of the interpretability logic \(\IL\), providing a wellfounded and a non-wellfounded sequent calculus for \(\IL\).
  The non-wellfounded calculus is used to establish a cut elimination argument for both calculi.  In addition, we show that the non-wellfounded proof theory of \(\IL\) is well-behaved, i.e., that cyclic proofs suffice.
  This makes it possible to prove uniform interpolation for~\(\IL\).
  As a corollary we also provide a proof of uniform interpolation for the interpretability logic~\(\ILP\).
\end{abstract}

\section{Introduction}
\label{sec:introduction}

This paper is concerned with the proof theory of interpretability logic \(\IL\) (see \cite{Visser1990}), i.e., the extension of provability logic with a binary modality formalizing interpretability. 
We introduce three calculi for \(\IL\): a wellfounded Gentzen calculus \(\fgentzenIL\), a non-wellfounded local progress calculus \(\gentzenIL\) and the regularization of the previous calculus, i.e., a cyclic local progress calculus \(\cgentzenIL\).
We show proof-theoretically the equivalence of these three calculi and also their equivalence to the usual Hilbert-style calculus for \(\IL\). 
%{\color{blue} Further, we develop a general proof theory for local progress calculi, which makes it possible to obtain a simple cut elimination result, which can be transferred from the non-wellfounded to the finitary calculus. }
Our procedure is displayed in Figure~\ref{fig:plan}.

Additionally, we will use the non-wellfounded calculus in order to provide a proof of uniform interpolation for \(\IL\).
We will also show uniform interpolation for the interpretability logic \(\ILP\) by interpreting it inside \(\IL\).
To the best knowledge of the authors, these two results were unknown previous to this work.

%\subsubsection{Contributions.}
%In this paper we will introduce two new calculi for interpretability logic: a wellfounded Gentzen calculus \(\fgentzenIL\) and a local progress (nonwellfounded) calculus \(\gentzenIL\). We will show, proof theoretically, the equivalence of this two calculi and their equivalence to the usual Hilbert calculus \(\IL\).
%The procedure is displayed in Figure~\ref{fig:plan}.

%The figure includes a regularization proof establishing the equivalence between \(\gentzenIL\) and \(\cgentzenIL\).
%Implicitly, the regularization  can be used to establish uniform interpolation for \(\IL\) via cyclic interpolation templates.

\begin{figure}
	\centering
	\begin{tikzpicture}
	\node at (0,2) {\(\IL\)};
	%arrows from that node:
	\draw[-latex] (0.5,2) -- (2.5,2); \node at (1.5,2.3) {\small Thm~\ref{tm:IL-and-fgentzenILcut}};

	\node at (3.5,2) {\(\fgentzenIL+\cut\)};
	\draw[-latex] (4.7,2) -- (7.2,2); \node at (5.9,2.3) {\small Thm~\ref{tm:fgentzenILcut-to-genteznILcut}};

	\node at (8.5,2) {\(\gentzenIL+\cut\)};
	\draw[-latex] (2.3,2) -- (0.5,2);
	\draw[-latex] (8,1.6) -- (8.0,0.4); \node at (7.3,1.1) {\small Cor~\ref{tm:gentzenILcut-to-gentzenIL}};

	\node at (3.5,0) {\(\fgentzenIL\)};
  \draw[-latex] (3.5,0.4) -- (3.5,1.6); \node at (4.2,1) {\small Trivial};

	\node at (8,0) {\(\gentzenIL\)};
	\draw[-latex] (7.2,0) -- (4,0); \node at (5.5,-0.3) {\small Thm~\ref{thm:gentzenIL-to-fgentzenIL}};
	\draw[-latex] (8.6,0) -- (10.2,0);  \node at (9.3,-0.3) {\small Thm~\ref{thm:gentzenIL-to-slimgentzenIL}};

	\node at (11,2) {\(\cgentzenIL\)};
  \draw[-latex] (10.5,1.8) -- (8.5,0.2);  \node [rotate=39] at (9.5,1.3) {\small Unfolding};

	\node at (11,0) {\(\slimgentzenIL\)};
  \draw[-latex] (11,0.4) -- (11,1.6); \node at (12,1) {\small Thm~\ref{thm:slimgentzenIL-to-cgentzenIL}};
	\end{tikzpicture}
	\caption{The plan. Arrows without labels are omitted in this paper.}
  	\label{fig:plan}
\end{figure}
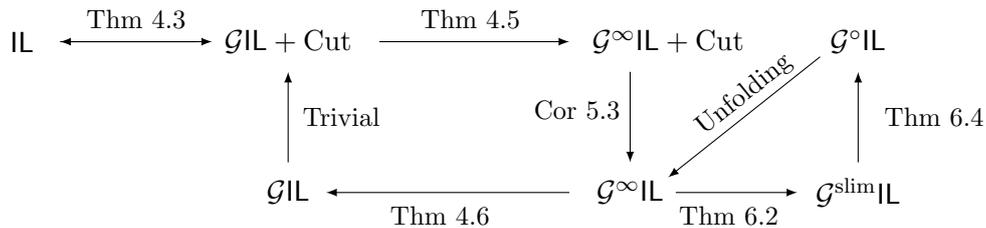

Thus, the contributions of this paper are threefold:
\begin{enumerate}
\item We refine our previous work on developing a general proof theory of non-wellfounded local progress calculi. In particular, we introduce the notions of admissible, locally admissible, eliminable, and locally eliminable rules and study their relationship.
\item We present a simple syntactic cut elimination method for interpretability logic. In particular, the cut reductions will mimic the cut reductions of \(\IKT\), i.e., \(\IL\) without L\"ob's axiom. To do so, we introduce a traditional Gentzen-style sequent calculus for \(\IL\) and a non-wellfounded version of it. 
\item Our non-wellfounded proofs exhibit a regular structure (i.e., they lead to cyclic proofs). This allow us to use them to establish uniform interpolation for \(\IL\).
  The definition of the interpolant will be far from trivial, due to the shape of the rules that are necessary for the calculus.
  This uniform interpolation result also makes it possible  to derive uniform interpolation for the interpretability logic \(\ILP\).
\end{enumerate}
The first two points partly appear in our previous work \cite{tableaux}.
We include them here in order to present their full proofs as well as to make this paper self-contained.
%We decided to also include them here as an opportunity to have a source with the complete proofs and to make this text self-contained.

\textbf{Related Work.} There are three directions of closely related work.
The first one is non-wellfounded and cyclic proof theory.
The structure and methodology of this paper has been inspired by the seminal paper~\cite{ShamkanovGrz}.
We follow the trend started in that paper of defining a non-wellfounded Gentzen calculi from a finite one where cut elimination becomes easier to show.
There are many proposed methods for cut elimination in non-wellfounded and cyclic proofs.
The interested reader may consult \cite{acclavio2024infinitarycuteliminationfiniteapproximations, cut-cyclic, afshari2025demystifyingmu, das_et_al:LIPIcs.CSL.2018.19, miranda2025cuteliminationnonwellfoundedmaster, alexis, ShamkanovGrz, shamkanov2023structuralprooftheorymodal, coalgebraic-translations}, among others.

We use our own method of cut elimination, described in detail in \cite{coalgebraic-translations}, as it simplifies the non-wellfounded cut elimination to the point of making it completely analogue to the finitary case. 

The second one is the proof-theoretical study of interpretability logics.
Sasaki's work~\cite{sasaki2002, sasaki2002p, sasaki2003} has been a fundamental reference for this paper.
Part of our motivation was to simplify his approach with the use of modern tools (e.g. non-wellfounded proof theory) and build from them.
More recently, \cite{iwata} has also studied the proof theory of subsystems of \(\IL\).

Finally, the last direction is the study of uniform interpolation.
Uniform interpolation was first considered by Pitts~\cite{Pitts1992-PITOAI},  who established it  for propositional intuitionistic logic.
Usually,   methods to prove uniform interpolation are divided into semantical and syntactical.
Pitts' method is syntactical and it is based, implicitely, on proof search.
The method we are going to use is also syntactical, it is also based on proof search, but it has two big differences compared  to Pitts'.
Firstly, the proof search will be explicit in the construction, which we call interpolation template.
Secondly, the proof search may contain loops.  This defines a system of  equations  of modal formulas,  which we have to solve to find the interpolant.
This methodology for uniform interpolation, in its modern shape, first appeared on \cite{uniform-interp-mu}.
The reader interested in interpolation for provability logics can also consult \cite{uniform-interp-mu, marta, Visser_2017, glp, kogure2023interpolationpropertiesbimodalprovability, Areces1998}, among others.

\textbf{Summary of Sections.}
In the next section we will introduce the basic concepts of interpretability logic and non-wellfounded proof theory needed for the rest of the paper.
Section~\ref{sec:sequent-calculi} will introduce the Gentzen calculi \(\gentzenIL\) and \(\fgentzenIL\).
Section~\ref{sec:transformations} is devoted to showing different translations between the calculi.
Section~\ref{sec:cut-elim} provides cut elimination for \(\gentzenIL\). This result together with the translations of Section~\ref{sec:transformations} provides a cut elimination method for \(\fgentzenIL\) and the equivalence of \(\IL\), \(\fgentzenIL\) and \(\gentzenIL\).
Section~\ref{sub:cyclic-systems} establishes  the triangle on the right of Figure~\ref{fig:plan},  i.e., we will prove that any non-wellfounded proof can be transformed into a regular proof.
In Section~\ref{sec:interpolation}, we will prove the uniform interpolation property for \(\IL\) using the non-wellfounded proof theory for \(\IL\) we developed in the previous sections.
In addition, we will obtain uniform interpolation for \(\ILP\) from the uniform interpolation of \(\IL\).

\section{Preliminaries}
\label{sec:preliminaries}

In this section we will introduce the basic concepts needed for future sections.

\subsection{Interpretability Logic}

In this subsection we will define the interpretability logic that we will be working with. We will also prove that certain formulas, which will be useful to us in the next sections, are theorems of this logic.

The syntax of interpretability logic is given by 
$$\phi::=p\ |\ \bot\ |\ \phi\to\phi\ |\ \phi\rhd\phi,$$ 
where $p$ ranges over a fixed infinite countable set of propositional variables. 
We call formulas of this language \(\IL\)-formulas.
When it is clear from the context that we are talking about \(\IL\)-formulas, we will just write formula instead of \(\IL\)-formula.
Other Boolean connectives can be defined as abbreviations as usual, i.e., \(\neg \phi = \phi \to \bot\), \(\phi \vee \psi = \neg \phi \to \psi\), \(\phi \wedge \psi = \neg(\phi \to \neg \psi)\).
$\Box\phi$ can be defined as an abbreviation, namely $\Box \phi = \neg \phi\rhd\bot$ and we set $\Diamond\phi = \neg(\phi \interp \bot)$. 
We will also use the abbreviation \(\bnec \phi = (\phi \interp \bot) \wedge \phi\) and
\(\dnec \phi = \phi \wedge \nec \phi\).

A formula of the form $\phi\rhd\psi$ will be called a $\rhd$-formula.
Given a \(\interp\)-formula \(\phi \interp \psi\), we wil say that \(\phi\) is its \emph{antecedent}, also denoted as \(\an(\phi \interp \psi)\), and \(\psi\) is its \emph{succedent}, also denoted as \(\su(\phi \interp \psi)\).
Given a multiset \(\Sigma\) of \(\interp\)-formulas, we will write \(\an(\Sigma)\) to mean the \emph{multiset} of antecedents in \(\Sigma\) and \(\su(\Sigma)\) to mean the \emph{multiset} of succedents in \(\Sigma\).\label{antecedent-succedent}.

We use lower case Latin letters $p$, $q$, ..., possibly with subscripts, for propositional variables
and lower case Greek letters $\phi$, $\psi$, ..., possibly with subscripts,
for \(\IL\)-formulas.
To avoid too many parentheses in longer formulas, 
we treat $\rhd$ as having higher priority than $\rightarrow$, but lower than other Boolean connectives.
Unary operators \(\nec\), \(\pos\) and \(\neg\) have the highest priority.

The idea of interpretability logics originates by extending the usual interpretation of modal logic inside arithmatic \(T\) by adding a binary modality \(\interp\).
Then \(\phi \interp \psi\) is understood as \(T + \psi\) is relative interpretable in \(T + \phi\).
Some interpretability logics are sound and complete with respect to this semantics, e.g., \(\ILM\) when we choose \(T\) to be Peano Arithmetic.
\(\IL\), the logic we are going to study, is sound with respect to many arithmetical theories, but incomplete.
However, it contains a good portion of the rest of interpretability logics and it is an appealing logic from the modal point of view.
For details the reader is encouraged to read \cite{Visser1990}.

In some proofs we will use the following auxiliary definition of a size of an \(\IL\)-formula.
\begin{definition}
  The \emph{size} \(|\phi|\) of an \(\IL\)-formula \(\phi\) is defined recursively as follows:
  \[ |\bot| = 1,\qquad |p| = 1,\qquad |\phi \to \psi| = |\phi \interp \psi| = |\phi| + |\psi| + 1.
  \]
\end{definition}

We define the interpretability logic we will consider in this paper.
\begin{definition}
Interpretability logic \(\IL\) is the smallest set of \(\IL\)-formulas that contains all the classical propositional tautologies and axioms
\begin{align*}
  &(\mathrm{K})\quad \Box(\phi\rightarrow\psi)\rightarrow(\Box\phi\rightarrow\Box\psi),
  && (\mathrm{4})\quad \Box\phi\rightarrow\Box\Box\phi,\\
  &(\mathrm{L})\quad \nec(\nec\phi \to \phi) \to \nec \phi,
  &&(\mathrm{J1})\quad \Box(\phi\rightarrow\psi)\rightarrow(\phi\rhd\psi), \\
  &(\mathrm{J2})\quad (\phi\rhd\chi)\wedge(\chi\rhd\psi)\rightarrow(\phi\rhd\psi),
  &&(\mathrm{J3})\quad (\phi\rhd\psi)\wedge(\chi\rhd\psi)\rightarrow(\phi\vee\chi)\rhd\psi,\\
  &(\mathrm{J4})\quad \phi\rhd\psi\rightarrow(\Diamond\phi\rightarrow\Diamond\psi).
  &&(\mathrm{J5})\quad \Diamond\phi\rhd\phi
\end{align*}
and is closed under modus ponens and necessitation:
\[
        \AxiomC{\(\phi \to \psi\)}
        \AxiomC{\(\phi\)}
        \BinaryInfC{\(\psi\)}
        \DisplayProof
        \ ,\qquad
        \AxiomC{\(\phi\)}
        \UnaryInfC{\(\nec \phi\)}
        \DisplayProof\ .
\]
Sometimes we will be referring to axiom (L) as \emph{Löb axiom}.
Note that the definition of \(\IL\) is not minimal.
In particular \((\mathrm{J4})\) is derivable from \((\mathrm{J2})\).
\end{definition}

In the following lemma we will put together some basic properties of \(\IL\).
These results will be used in some proofs in the remainder of this paper.

\begin{lemma}\label{lm:basics-of-IL}
  Let \(\phi,\psi\) be formulas and \(\Sigma\) be a non-empty finite multiset of formulas.
  Then
  \begin{enumerate}
    \item \(\IL \vdash \phi \to \psi\) implies \(\IL \vdash \phi \interp \psi\).
    \item (L\"ob's rule in \(\IL\)) \(\IL \vdash \psi \wedge \bigwedge (\Sigma \interp \bot) \to \bigvee \Sigma\) implies \(\IL \vdash \psi \interp \bigvee \Sigma\).
    \item \(\IL \vdash \phi \interp \bnec \phi\).
  \end{enumerate}
\end{lemma}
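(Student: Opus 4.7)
My plan is to handle the three items in order, since each reuses the previous.

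For part (1), the proof is immediate: apply necessitation to $\phi \to \psi$ to obtain $\Box(\phi \to \psi)$, then use (J1) together with modus ponens.

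For part (2), writing $\chi = \bigvee \Sigma$, I would first verify the auxiliary $\IL$-equivalence $\bigwedge (\Sigma \interp \bot) \leftrightarrow \chi \interp \bot$. The $\to$-direction follows by iterating (J3); the $\leftarrow$-direction uses part (1) on each tautology $\sigma \to \chi$ (giving $\sigma \interp \chi$) combined with (J2) and $\chi \interp \bot$. Under this equivalence the hypothesis becomes $\psi \wedge (\chi \interp \bot) \to \chi$, which is classically equivalent to $\psi \to \chi \vee \Diamond \chi$ (recall $\Diamond \chi = \neg(\chi \interp \bot)$). Applying part (1) lifts this to $\psi \interp (\chi \vee \Diamond \chi)$. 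Finally, (J5) gives $\Diamond \chi \interp \chi$, part (1) applied to $\chi \to \chi$ gives $\chi \interp \chi$, (J3) combines them into $(\chi \vee \Diamond \chi) \interp \chi$, and (J2) yields $\psi \interp \chi$.

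For part (3), I would apply part (2) with $\psi = \phi$ and $\Sigma = \{\bnec \phi\}$, so it suffices to establish $\IL \vdash \phi \wedge (\bnec \phi \interp \bot) \to \bnec \phi$. Since $\bnec \phi = (\phi \interp \bot) \wedge \phi$ and $\phi$ is already a conjunct on the left, it is enough to derive $\phi \interp \bot$ from $\bnec \phi \interp \bot$. Unfolding $\Box \alpha \equiv \neg \alpha \interp \bot$ and using classical equivalences inside $\interp$ (which are admissible thanks to part (1) together with (J2)), we have $\bnec \phi \interp \bot \equiv \Box(\Box \neg \phi \to \neg \phi)$, and L\"ob's axiom (L) applied to $\neg \phi$ gives $\Box \neg \phi \equiv \phi \interp \bot$.

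The main obstacle is bookkeeping: carefully justifying the classical propositional manipulations under the $\interp$-modality (both on antecedents and succedents) and unfolding the various abbreviations $\Box$, $\Diamond$, $\bnec$. It is worth noting that only part (3) actually uses L\"ob's axiom (L); parts (1) and (2) go through already in the subsystem without it, relying only on (J1)--(J5), classical logic, and necessitation.
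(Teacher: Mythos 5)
Your proposal is correct. Items 1 and 3 follow essentially the same route as the paper (item 1 via necessitation and (J1); item 3 via L\"ob's axiom for $\neg\phi$, unfolding $\nec(\nec\neg\phi\to\neg\phi)$ into $\bnec\phi\interp\bot$ with the double negation pushed under $\interp$ by item 1 and (J2), and then closing via item 2 with $\Sigma=\{\bnec\phi\}$). Item 2, however, takes a genuinely different decomposition. The paper keeps the multiset $\Sigma=\{\phi_0,\ldots,\phi_m\}$ explicit throughout: from the hypothesis it derives $\psi\interp\big(\bigvee\Sigma\vee\bigvee\pos\Sigma\big)$ by item 1 and then chains (J5), item 1, (J3) and (J2) indexwise to bring the succedent down to $\bigvee\Sigma$. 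You instead prove the auxiliary equivalence $\IL\vdash\bigwedge(\Sigma\interp\bot)\leftrightarrow(\chi\interp\bot)$ for $\chi=\bigvee\Sigma$ (one direction by iterating (J3), the other by item 1 plus (J2)), collapse the hypothesis to the singleton form $\psi\wedge(\chi\interp\bot)\to\chi$, and run the same (J5)/(J3)/(J2) argument for one formula. Both derivations rest on exactly the same axioms; yours factors the multiset bookkeeping into a small reusable lemma and reads more modularly, whereas the paper's is a self-contained chain of interpretations. Your closing remark that only item 3 requires axiom (L), while items 1 and 2 already hold in $\IL$ without (L), is correct, agrees with what the paper's proof actually uses, and is a worthwhile observation not made explicit there. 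One minor caution on your phrasing in item 3: L\"ob's axiom for $\neg\phi$ yields the implication $\nec(\nec\neg\phi\to\neg\phi)\to\nec\neg\phi$, while $\nec\neg\phi\equiv\phi\interp\bot$ is pure unfolding of abbreviations plus double negation, not a consequence of (L); you use both facts, but the sentence as written could be read as attributing the latter to (L).
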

\begin{proof}
  The proof of 1.\ is trivial using necessitation and (J1).
  Let us prove 2.\ and 3.\ in detail.

  Assume \(\IL \vdash\psi \wedge \bigwedge (\Sigma \interp \bot) \to \bigvee \Sigma\). 
  From classical propositional reasoning we obtain 
  \(\IL \vdash \psi \to \bigvee \Sigma \vee \bigvee (\pos \Sigma)\), so by using 1.\ we obtain
  \begin{equation}\label{IL-theorem:1}
    \tag{i}
    \IL \vdash \psi \interp \left(\bigvee \Sigma \vee \bigvee \pos \Sigma\right)
  \end{equation}
  Let \(\Sigma = \{\phi_0,\ldots, \phi_m\}\) and notice that \(\IL \vdash \pos \phi_j \interp \phi_j\) for each \(j \leq m\) thanks to (J5).
  For each \(j \leq m\) we also have by 1.\ that \(\IL \vdash \phi_j \interp \phi_j\), 
  so we get by (J3) that \(\IL \vdash (\phi_j \vee \pos \phi_j) \interp \phi_j\).
  Also from \(\IL \vdash \phi_j \to \bigvee_{i \leq m} \phi_i\) 
  by 1.\ we get \(\IL \vdash \phi_j \interp \bigvee_{i \leq m} \phi_i\).
  Then, using (J2) we obtain \(\IL \vdash (\phi_j \vee \pos \phi_j) \interp \bigvee_{i \leq m} \phi_i\) for each \(j \leq m\) and, by (J3), 
  	\begin{equation} \label{IL-theorem:2}
      \tag{ii}
  	\IL \vdash \left(\bigvee_{i \leq m}(\phi_i \vee \pos \phi_i) \right) \interp \bigvee_{i \leq m}\phi_i.
	\end{equation}
  Since the formulas \(\left(\bigvee_{i \leq m} \phi_i\right) \vee \left(\bigvee_{i \leq m} \pos \phi_i\right)\) and \(\bigvee_{i \leq m} (\phi_i \vee \pos \phi_i)\) are equivalent in classical propositional logic by 1.\ we obtain 
  \begin{equation} \label{IL-theorem:3}
    \tag{iii}
  	\IL \vdash \left(\bigvee \Sigma \vee \bigvee \pos \Sigma\right) \interp \left(\bigvee_{i \leq m} (\phi_i \vee \pos \phi_i)\right)
  \end{equation}
  where we used that \(\left(\bigvee_{i \leq m} \phi_i\right) \vee \left(\bigvee_{i \leq m} \pos \phi_i\right)\) is just the same formula as \(\bigvee \Sigma \vee \bigvee \pos \Sigma\).
  So using (\ref{IL-theorem:1}), (\ref{IL-theorem:2}), (\ref{IL-theorem:3}) and the (J2) axiom gives us
  \[
    \IL \vdash \psi \interp \bigvee_{i \leq m}\phi_i,
  \]
  as desired.

  Proof of 3.
  By L\"ob's axiom we obtain that \(\IL \vdash \nec(\nec \neg \phi \to \neg \phi) \to \nec \neg \phi\).
  Unfolding some definitions of \(\nec\) we get \(\IL \vdash \neg(\nec \neg \phi \to \neg \phi) \interp \bot \to \neg\neg\phi \interp \bot\).
  Since \(\IL \vdash \neg\neg \phi \leftrightarrow \phi\) and \(\IL \vdash \neg(\nec \neg \phi \to \neg \phi) \leftrightarrow (\nec \neg \phi \wedge \phi)\), using 1.\ and (J2) we get \(\IL \vdash (\nec \neg \phi \wedge \phi) \interp \bot \to \phi \interp \bot\), or in other words \(\IL \vdash ((\neg\neg\phi \interp \bot) \wedge \phi) \interp \bot \to \phi \interp \bot\).
  Using again that \(\IL \vdash \neg\neg\phi \leftrightarrow \phi\) with 1.\ and (J2) (multiple times) we obtain \(\IL \vdash ((\phi \interp \bot) \wedge \phi) \interp \bot \to \phi \interp \bot\), i.e., \(\IL \vdash \bnec \phi \interp \bot \to \phi \interp \bot\).
  Adding \(\phi\) on both sides we have \(\IL \vdash \phi \wedge (\bnec \phi \interp \bot) \to \phi \wedge \phi \interp \bot\), or analogously, \(\IL \vdash \phi \wedge (\bnec \phi \interp \bot) \to \bnec \phi\).
  Using 2.\ we conclude the desired \(\IL \vdash \phi \interp \bnec \phi\).
\end{proof}

\subsection{Non-wellfounded Proof Theory}

We introduce the basic concepts of (non-wellfounded) proof theory that we are going to use. % in the paper. 
The details can be found in \cite{coalgebraic-translations}.
We start with the definition of non-wellfounded finitely branching trees, from now own simply called \emph{trees}.

\begin{definition}
  A tree with labels in \(A\) is a function \(T\) such that
\begin{enumerate}
  \item \(\domain(T) \subseteq \mathbb{N}^{<\omega}\) is closed under prefixes and \(\image(T) \subseteq A\).
  \item For each \(w \in \domain(T)\) there is an unique \(k\), called the \emph{arity of \(w\)}, such that \(wi \in \domain(T)\) if and only if \(i < k\).
\end{enumerate}
The elements of \(\domain(T)\) are called \emph{nodes of \(T\)}.
Given two nodes \(w,v\) of \(T\) we will write \(w \leq v\) to mean that \(v = wu\) for some \(u \in \mathbb{N}^{<\omega}\) (\(u\) may be the empty sequence).
Also, we will writem \(w < v\) to mean that \(w \leq v\) and \(w \neq v\).
We will say that \(w\) and \(v\) are \emph{incomparable} if \(w \not\leq v\) and \(v \not\leq w\).

Given a tree \(T\) an \emph{(infinite) branch} is an infinite sequence \(b \in \mathbb{N}^\omega \) such that for each  \(i \in \mathbb{N}\), \(b \restricts i \in \domain(T)\), where \(b \restricts i = b_0 \cdots b_{i-1}\).
\end{definition}

\subsubsection{Basics of Local Progress Calculi.}\label{subsub:local-progress-calculi}

We use upper case Greek letters $\Gamma$, $\Delta$, $\Sigma$, $\Gamma'$, $\Delta'$, ...,
possibly with subscripts, for finite multisets of formulas.
The expression $\Gamma\rhd\bot$ denotes the multiset $\{\phi\rhd\bot\ |\ \phi\in\Gamma\}$.
By a sequent, we mean an ordered pair \((\Gamma,\Delta)\) usually denoted as \(\Gamma\Rightarrow\Delta\).
We use upper case Latin letters $S$, $S'$, ..., possibly with subscripts, for sequents.
We will write \(\Gamma, \Delta\) to mean \(\Gamma \cup \Delta\) and \(\phi,\Gamma\) or \(\Gamma,\phi\) to mean \(\{\phi\} \cup \Gamma\), as usual.\footnote{Note that in particular \(\Gamma_0,\ldots,\Gamma_n\) could be either \(\Gamma_0 \cup \cdots \cup \Gamma_n\) or a sequence of multisets of formulas with first element \(\Gamma_0\) and last element \(\Gamma_n\). The meaning of this expression should be clear by context.}
Also, we will write expressions like \((\Gamma,\phi,\Delta) \interp \bot\) to mean \((\Gamma \interp \bot) \cup \{\phi \interp \bot\} \cup (\Delta \interp \bot)\).
Sequences of sequents like \(S_{n}, \ldots, S_{0}\) will be denoted as \([S_i]_{n \ldots i \ldots 0}\).
The \emph{size} of a sequent \(\Gamma \Rightarrow \Delta\), denoted as \(|\Gamma \Rightarrow \Delta|\) will be the sum of the sizes of the formulas ocurring in it, taking into account repetitions.
For example, \(|\phi,\phi, \psi \Rightarrow \psi, \chi| = 2|\phi| + 2|\psi| + |\chi|\).

\begin{definition}
  An \emph{\(n\)-ary rule} is a set of \(n+1\)-tuples \((S_0,\ldots,S_n)\) where each \(S_i\) is a sequent.
  The elements of a rule are called its \emph{instances}.

  A \emph{local progress sequent calculus} is a pair \(G = (\mathcal{R}, L)\) where
  \begin{enumerate}
    \item \(\mathcal{R}\) is a set of rules.
    \item \(L\) is a function such that given a \(n\)-ary rule \(R\) and a rule instance \((S_0,\ldots, S_n)\) of \(R\) returns a subset of \(\{0,\ldots, n-1\}\), called \emph{progressing premises}.
      \(L\) is called the \emph{progressing function}.
  \end{enumerate}
\end{definition}

\begin{definition}
  Let \(\mathcal{G}\) be a local progress sequent calculus.
  A \emph{prederivation} \(\pi\) in \(\mathcal{G}\) is a non-wellfounded tree, whose internal nodes are annotated by a sequent and a rule of \(\mathcal{G}\) and the leafs are annotated by a sequent and a rule of \(\mathcal{G}\) or by a sequent only.
  The leafs which are annotated simply by a sequent are called \emph{assumptions} of the prederivation.
  In addition, for any \(n\)-ary node \(w\) of \(\pi\) annotated with a sequent \(S\) and a rule \(R\), we have that \((S_0,\ldots,S_{n-1},S) \in R\), where each \(S_i\) is the sequent at \(wi\) (the \(i\)-th successor of \(w\)).

  Given a prederivation \(\pi\) in \(\mathcal{G}\) and an infinite branch \(b\) in \(\pi\) we will say that \emph{\(b\) progresses at \(i\)} iff \(b_{i+1} \in L_R(S_0,\ldots,S_{n-1},S)\) where the node \(b \restricts i\) is \(n\)-ary, \(R\) is the rule at node \(b \restricts i\), \(S\) is the sequent at node \(b \restricts i\) and \(S_j\) is the sequent at node \((b\restricts i)j\) for \(j < n\).
  A prederivation \(\pi\) in \(\mathcal{G}\) is said to be a \emph{derivation} in \(\mathcal{G}\) iff for any infinite branch \(b\) of \(\pi\) the set \(\{i \in \mathbb{N} \mid b \text{ progresses at }i\}\) is infinite.

  A (pre)proof is a (pre)derivation without assumptions.
  We will write \(\mathcal{G} \vdash S\) to mean that there is a proof in \(\mathcal{G}\) whose conclusion is \(S\) and \(\pi \vdash_{\mathcal{G}} S\) to mean that \(\pi\) is a proof in \(\mathcal{G}\) with conclusion \(S\) (we will omit \(\mathcal{G}\) when it is clear from the context).

  A local progress calculus is said to be \emph{wellfounded} if its local progress function is the constant function always returning \(\varnothing\).\footnote{Derivations and proofs in a wellfounded calculus must be wellfounded, i.e., no infinite branches would be allowed so we recover the usual definitions of proof and derivation.}
  Given a local progress calculus \(\mathcal{G}\) and a rule \(R\) not in \(\mathcal{R}\) we will define the local progress calculus \(G + R\) by adding the rule \(R\) to the calculus and extending the local progress function such that no premise of an instance of \(R\) is a progressing premise.
\end{definition}

Given prederivations \(\pi_0,\ldots,\pi_{n-1}\) whose roots are annotated with the sequent \(S_i\), respectively, and a \(n\)-ary rule \(R\) such that \((S_0,\ldots,S_{n-1},S) \in R\) it will be common to write
\[
  \AxiomC{\(\pi_0\)}
  \noLine
  \UnaryInfC{\(S_0\)}
  \AxiomC{\(\cdots\)}
  \AxiomC{\(\pi_{n-1}\)}
  \noLine
  \UnaryInfC{\(S_{n-1}\)}
  \RightLabel{\(R\)}
  \TrinaryInfC{\(S\)}
  \DisplayProof
\]
to mean the prederivation whose root is annotated with sequent \(S\) and rule \(R\) and whose subtree at the \(i\)-th successor of the root is \(\pi_i\).
In case the prederivation is an assumption we will write it without the line.
If we write
\[
  \AxiomC{\(\pi_0\)}
  \noLine
  \UnaryInfC{\(S_0\)}
  \AxiomC{\(\cdots\)}
  \AxiomC{\(\pi_{n-1}\)}
  \noLine
  \UnaryInfC{\(S_{n-1}\)}
  \RightLabel{\(R_0, \ldots, R_{m}\)}
  \doubleLine
  \TrinaryInfC{\(S\)}
  \DisplayProof
\]
we mean the prederivation with conclusion \(S\) obtained from \(\pi_0, \ldots, \pi_{n-1}\) via multiple applications of the rules \(R_0\) to \(R_{m}\).
And in case we write
\[
  \AxiomC{\(\pi\)}
  \noLine
  \UnaryInfC{\(S\)}
  \dashedLine
  \UnaryInfC{\(\hspace{.5cm}S'\hspace{.5cm}\)}
  \DisplayProof
\]
we just mean the prederivation \(\pi\) where the sequent \(S\) is equal to the sequent \(S'\) but has been rewritten to ease the reading.
For example if \(\phi = \phi'\) and \(\pi\) is a derivation of \(\phi, \Gamma \Rightarrow \Delta\) then we may write
\[
  \AxiomC{\(\pi\)}
  \noLine
  \UnaryInfC{\(\phi, \Gamma \Rightarrow \Delta\)}
  \dashedLine
  \UnaryInfC{\(\psi, \Gamma \Rightarrow \Delta\)}
  \DisplayProof
\]
to make explicit that \(\pi\) is a prederivation of \(\psi, \Gamma \Rightarrow \Delta\).

\subsubsection{The Method of Translations.}\label{subsub:translations}

In \cite{coalgebraic-translations} we developed a method to construct translations between local progress calculi, i.e., to provide functions transforming proofs of one calculus into proofs (not necessarily of the same sequent) in another calculus.
Here, we will introduce informally the concepts and methods, the interested reader should consult \cite{coalgebraic-translations} for more details.

The idea goes as follows.
Given a proof \(\pi\) in a local progress calculus \(G\) we can define a partition of its nodes, the elements of the partition will be called \emph{local fragments}.
Two nodes will belong to the same local fragment if the smallest path between them does not go through progress.
Here, with passing through progress we mean going from the premise to the conclusion of a rule instance, or from conclusion to premise, such that the premise is progressing in the rule instance.
Thanks to the condition that any infinite branch progresses infinitely often, it is easy to see that each local fragment will be a finite tree, in other words, this slices the non-wellfounded tree into (possibly infinitely many) finite trees.
Figure \ref{fig:fragments} shows how the slicing can look in this setting, where each triangle represents a local fragment.

\begin{figure}
	\centering
	\begin{tikzpicture}[scale=0.4]
		\filldraw[gray!50] (0,-0.5) -- (3.5,2.5) -- (-3.5,2.5) -- cycle;
		\filldraw (0,0) circle (2pt);

		\filldraw (0,1) circle (2pt); \draw (0,0) -- (0,1);

		\filldraw[gray!50] (-3,2.5) -- (-4,4.5) -- (-2,4.5) -- cycle;
		\filldraw[gray!50] (-1,2.5) -- (-2,3.5) -- (0,3.5) -- cycle;
		\filldraw[gray!50] (2,2.5) -- (0,4.5) -- (4,4.5) -- cycle;
		\filldraw (-2,2) circle (2pt); \draw (0,1) -- (-2,2);
		\filldraw (2,2) circle (2pt); \draw (0,1) -- (2,2);

		\filldraw (-3,3) circle (2pt); \draw (-2,2) -- (-3,3);
		\filldraw (-1,3) circle (2pt); \draw (-2,2) -- (-1,3);
		\filldraw (2,3) circle (2pt); \draw (2,2) -- (2,3);

		\filldraw (-3,4) circle (2pt); \draw (-3,3) -- (-3,4); 
		\node at (-3,5) {\large\(\vdots\)};
		\filldraw (1,4) circle (2pt); \draw (2,3) -- (1,4); 
		\filldraw (3,4) circle (2pt); \draw (2,3) -- (3,4); 
	\end{tikzpicture}
	\caption{Structure of proofs in local progress calculi}
  	\label{fig:fragments}
\end{figure}
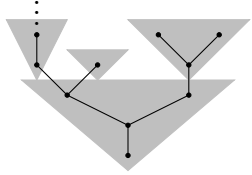

The bottom-most local fragment, i.e., the one to which the root belongs to, is called the \emph{main local fragment}.
We define the \emph{local height} of a proof \(\pi\), denoted as \(\lheight(\pi)\), as the height of its main local fragment (which is a finite tree, so indeed it has a height).

Finally, the translation method goes as follows.
To define a function from local progress Gentzen calculus \(G\) to local progress Gentzen calculus \(G'\), it suffices to provide another function (called \emph{corecursive step}) that, given a proof \(\pi\) in \(G\), returns:
\begin{enumerate}
  \item a local fragment in \(G'\), i.e., a finite tree generated by the rules of \(G'\) where every leaf is either axiomatic or a progressing premise and every progressing premise is a leaf;
  \item for each non-axiomatic leaf (of the local fragment) with sequent \(S\), a proof of \(S\) in \(G\).
\end{enumerate}
Then, the desired translation function is obtained by extending this corecursive step via corecursion.
The procedure is displayed in Figure \ref{fig:corecursion}.

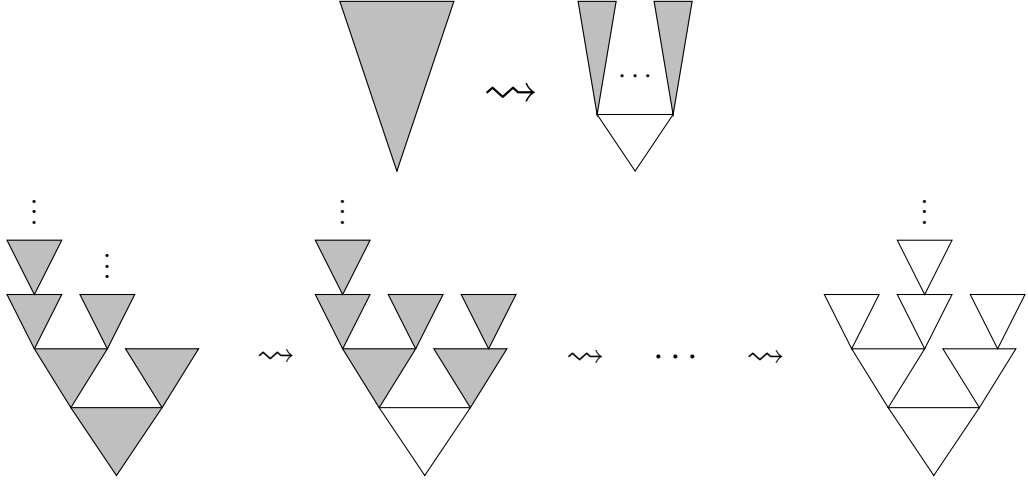
\begin{figure}
	\centering
	% upper part of the picture 
	\begin{tikzpicture}[scale=0.5]
	\filldraw[gray!50] (-2.5,4.5) -- (-1,0) -- (0.5,4.5) -- cycle;
        \draw (-2.5,4.5) -- (-1,0) -- (0.5,4.5) -- cycle;
        \node at (2,2) {\huge\(\rightsquigarrow\)};
    \end{tikzpicture}\quad
    \begin{tikzpicture}[scale=0.5]
        \filldraw[gray!50] (-1.5,4.5) -- (-1,1.5) -- (-0.5,4.5) -- cycle;
        \draw (-1.5,4.5) -- (-1,1.5) -- (-0.5,4.5) -- cycle;
        \node at (0,2.5) {\(\cdots\)};
        \filldraw[gray!50] (0.5,4.5) -- (1,1.5) -- (1.5,4.5) -- cycle;
        \draw (0.5,4.5) -- (1,1.5) -- (1.5,4.5) -- cycle;
        \draw (-1,1.5) -- (0,0) -- (1,1.5) -- cycle;
	\end{tikzpicture}
	
	% lower part of the picture
	\begin{tikzpicture}[scale=0.6]
        \filldraw[gray!50] (2,1.5) -- (3,0) -- (4,1.5) -- cycle;
        \draw (2,1.5) -- (3,0) -- (4,1.5) -- cycle;

        \filldraw[gray!50] (1.2,2.8) -- (2,1.5) -- (2.8,2.8) -- cycle;
        \draw (1.2,2.8) -- (2,1.5) -- (2.8,2.8) -- cycle;
        \filldraw[gray!50] (3.2,2.8) -- (4,1.5) -- (4.8,2.8) -- cycle;
        \draw (3.2,2.8) -- (4,1.5) -- (4.8,2.8) -- cycle;

        \filldraw[gray!50] (0.6,4) -- (1.2,2.8) -- (1.8,4) -- cycle;
        \draw (0.6,4) -- (1.2,2.8) -- (1.8,4) -- cycle;
        \filldraw[gray!50] (2.2,4) -- (2.8,2.8) -- (3.4,4) -- cycle;
        \draw (2.2,4) -- (2.8,2.8) -- (3.4,4) -- cycle;

        \filldraw[gray!50] (0.6,5.2) -- (1.2,4) -- (1.8,5.2) -- cycle;
        \draw (0.6,5.2) -- (1.2,4) -- (1.8,5.2) -- cycle;
        \node at (2.8,4.8) {\(\vdots\)};
        
        \node at (1.2,6) {\(\vdots\)};

        \node at (6.5,2.6) {\Large\(\rightsquigarrow\)};
    \end{tikzpicture}
    \begin{tikzpicture}[scale=0.6]
        \draw (2,1.5) -- (3,0) -- (4,1.5) -- cycle;

        \filldraw[gray!50] (1.2,2.8) -- (2,1.5) -- (2.8,2.8) -- cycle;
        \draw (1.2,2.8) -- (2,1.5) -- (2.8,2.8) -- cycle;
        \filldraw[gray!50] (3.2,2.8) -- (4,1.5) -- (4.8,2.8) -- cycle;
        \draw (3.2,2.8) -- (4,1.5) -- (4.8,2.8) -- cycle;

        \filldraw[gray!50] (0.6,4) -- (1.2,2.8) -- (1.8,4) -- cycle;
        \draw (0.6,4) -- (1.2,2.8) -- (1.8,4) -- cycle;
        \filldraw[gray!50] (2.2,4) -- (2.8,2.8) -- (3.4,4) -- cycle;
        \draw (2.2,4) -- (2.8,2.8) -- (3.4,4) -- cycle;
        \filldraw[gray!50] (3.8,4) -- (4.4,2.8) -- (5,4) -- cycle;
        \draw (3.8,4) -- (4.4,2.8) -- (5,4) -- cycle;

        \filldraw[gray!50] (0.6,5.2) -- (1.2,4) -- (1.8,5.2) -- cycle;
        \draw (0.6,5.2) -- (1.2,4) -- (1.8,5.2) -- cycle;
        
        \node at (1.2,6) {\(\vdots\)};

        \node at (8.5,2.6) {\Large\(\rightsquigarrow\quad\cdots\quad\rightsquigarrow\)};
    \end{tikzpicture}\quad
    \begin{tikzpicture}[scale=0.6]
        \draw (2,1.5) -- (3,0) -- (4,1.5) -- cycle;

        \draw (1.2,2.8) -- (2,1.5) -- (2.8,2.8) -- cycle;
        \draw (3.2,2.8) -- (4,1.5) -- (4.8,2.8) -- cycle;

        \draw (0.6,4) -- (1.2,2.8) -- (1.8,4) -- cycle;
        \draw (2.2,4) -- (2.8,2.8) -- (3.4,4) -- cycle;
        \draw (3.8,4) -- (4.4,2.8) -- (5,4) -- cycle;

        \draw (2.2,5.2) -- (2.8,4) -- (3.4,5.2) -- cycle;
        
        \node at (2.8,6) {\(\vdots\)};
    \end{tikzpicture}
	\caption{Corecursive step function (top) and its extension from proofs to proofs (bottom). Tall gray (white) triangles represent proofs in \(G\) (\(G'\)) and short gray (white) triangles represent local fragments in \(G\) (\(G'\)). }
  	\label{fig:corecursion}
\end{figure}

%\subsubsection{Properties of Rules.}
\noindent
\textbf{Properties of Rules.}
Finally we introduce some properties of rules and proofs that will be fundamental to show cut elimination.

\begin{definition}
  Let \(R\) be an \(n\)-ary rule, \(\mathcal{G}\) be a local progress Gentzen calculus and \(\pi\) a proof in \(\mathcal{G} + R\).
  We say that
  \begin{enumerate}
    \item \(R\) is \emph{derivable} in \(\mathcal{G}\) if for any \((S_0,\ldots,S_{n-1}, S) \in R\) there is a derivation in \(\mathcal{G}\) with assumptions \(\{S_0,\ldots, S_{n-1}\}\) and conclusion \(S\).
    \item \(R\) is \emph{admissible} in \(\mathcal{G}\) if for any instance \((S_0,\ldots,S_{n-1},S)\) of the rule \(R\), \(\mathcal{G} \vdash S_0, \ldots, \mathcal{G} \vdash S_{n-1}\) implies that \(\mathcal{G} \vdash S\).
    \item \(R\) is \emph{invertible} if for each \(i < n\), the rule
      \[
        R^{-1}_i = \{(S_n, S_i) \mid \text{Exists } S_0,\ldots,S_{i-1},S_{i+1}, \ldots S_{n-1}.\ (S_0,\ldots,S_n) \in R\}
      \]
      is admissible.
      In words, if each of the rules which says that from the conclusion you can infer the premises is admissible.
    \item \(R\) is \emph{eliminable} in \(\mathcal{G}\) if for any sequent \(S\) if \(\mathcal{G} + R\vdash S\) then \(\mathcal{G} \vdash S\).
    \item \(\pi\) is \emph{locally \(R\)-free} if it contains no instances of \(R\) in its main local fragment.
    \item \(R\) is \emph{locally admissible} in \(\mathcal{G}\) if for any instance \((S_0,\ldots,S_{n-1},S)\) of the rule if \(\mathcal{G} \vdash S_0, \ldots, \mathcal{G} \vdash S_{n-1}\) with locally \(R\)-free proofs, then there is a locally \(R\)-free proof of \(\mathcal{G} \vdash S\).
    \item \(R\) is \emph{locally eliminable} if for any \(S\), if \(\mathcal{G} + R\vdash S\) then there is a locally \(R\)-free proof in \(\mathcal{G} + R\) of \(S\).
  \end{enumerate}
  The (local) admissibility/eliminability properties can be understood as asserting the existence of a proof \(\pi\) from the assumption that some proofs \(\pi_0,\ldots, \pi_{n-1}\) exist.
  Let \(P\) be a property of proofs, we say that any of the properties above holds \emph{preserving \(P\)} if, adding the extra assumption that \(\tau_0,\ldots, \tau_{n-1}\) fulfill \(P\), \(\pi\) also fulfills \(P\).
  In particular, we will say that we have (local) admissibility/eliminability of a rule \(R\) preserving height if \(\height(\pi) \leq \max(\height(\pi_0), \ldots, \height(\pi_{n-1}))\) and similarly for local height.
\end{definition}

Note that derivability implies eliminability which implies admissibility.
The fundamental lemma to show cut elimination is the following.

\begin{lemma}\label{lm:admissibility-and-eliminability}
  For any local progress sequent calculi, the following holds
  \[
    R \text{ eliminable iff }R \text{ locally eliminable iff } R \text{ locally admissible}.
  \]
\end{lemma}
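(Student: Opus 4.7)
The plan is to prove the chain of implications: eliminable $\Rightarrow$ locally eliminable $\Rightarrow$ locally admissible $\Rightarrow$ eliminable. The first two are essentially direct. For eliminable $\Rightarrow$ locally eliminable, observe that a proof in $\mathcal{G}$ contains no instance of $R$ at all, so it is already locally $R$-free when regarded as a proof in $\mathcal{G}+R$. For locally eliminable $\Rightarrow$ locally admissible, given an instance $(S_0,\ldots,S_{n-1},S)$ of $R$ together with locally $R$-free proofs of the $S_i$ in $\mathcal{G}+R$, I would compose them by a single application of $R$ at the root to obtain a proof of $S$ in $\mathcal{G}+R$, and then invoke local eliminability on that proof.

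The real content lies in the implication locally admissible $\Rightarrow$ eliminable. I would first prove an auxiliary \emph{local cleanup} lemma: assuming local admissibility of $R$, every proof $\pi$ in $\mathcal{G}+R$ can be transformed into a locally $R$-free proof of the same sequent. The argument proceeds by induction on $\lheight(\pi)$. Because the definition of $\mathcal{G}+R$ declares that no premise of an $R$-instance is progressing, whenever the root rule of $\pi$ is $R$ all its premises lie in the \emph{same} local fragment and therefore have strictly smaller local height; the induction hypothesis converts them into locally $R$-free proofs, and local admissibility then produces a locally $R$-free proof of the conclusion. If the root rule is some other rule $R'$, I apply the induction hypothesis to the non-progressing premises only and leave the progressing premises intact; recomposing with $R'$ yields a proof whose main local fragment is $R$-free by construction.

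With local cleanup in hand, I would invoke the translation method of Section~\ref{subsub:translations} to conclude eliminability. The corecursive step takes a proof $\pi$ in $\mathcal{G}+R$, applies local cleanup to obtain a locally $R$-free proof $\pi'$, and outputs the main local fragment of $\pi'$ as a local fragment in $\mathcal{G}$ (this is legitimate precisely because that main local fragment contains no instances of $R$) together with the subproofs attached at its progressing leaves, which are again proofs in $\mathcal{G}+R$ and will be fed back into the corecursion. The corecursive extension of this step is the desired proof of the original sequent in $\mathcal{G}$.

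The main obstacle is the cleanup induction. Its correctness depends crucially on the convention that in $\mathcal{G}+R$ no premise of $R$ is progressing, which prevents any $R$-instance from straddling the boundary between two local fragments and ensures that the induction on local height reaches every $R$-instance inside the main local fragment with its premises already transformed. A secondary care point is checking that, after the corecursive unfolding, every infinite branch of the output still progresses infinitely often; this is exactly what the machinery of~\cite{coalgebraic-translations} is designed to deliver once the corecursive step is given.
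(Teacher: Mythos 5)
Your proposal is correct and rests on the same two technical pillars as the paper's proof: an induction on local height to show that a locally admissible rule yields locally $R$-free proofs (your ``local cleanup'' lemma), and the corecursive translation method to bootstrap local eliminability into full eliminability. The only difference is cosmetic, namely the orientation of the cycle of implications: the paper proves eliminable $\Rightarrow$ locally admissible $\Rightarrow$ locally eliminable $\Rightarrow$ eliminable, whereas you close the cycle the other way around and then re-derive locally admissible $\Rightarrow$ locally eliminable inside your final step, which makes your explicit locally eliminable $\Rightarrow$ locally admissible implication redundant for the equivalence.
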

\begin{proof}
  That \(R\) is eliminable trivially implies that \(R\) is locally admissible.
  To show that \(R\) locally admissible implies \(R\) locally eliminable it suffices to do an induction in the local height. 
  Finally, to show that \(R\) locally eliminable implies that \(R\) is eliminable it suffices to apply the method of translations using local eliminability to define a corecursive step.
\end{proof}

In addition we notice the following facts.
\begin{enumerate}
  \item In a local-progress calculus \(\mathcal{G}\), \(R\) is locally admissibile preserving local height implies that \(R\) is eliminable preserving height.
  \item In a wellfounded calculus \(\mathcal{G}\), \(R\) is admissibile preserving local height implies that \(R\) is eliminable preserving height.
  \item In a local-progress calculus \(\mathcal{G}\), \(R\) is locally admissibile preserving local \(R'\)-freeness implies that \(R\) is eliminable preserving local \(R'\)-freeness.
  \item In a wellfounded calculus \(\mathcal{G}\), \(R\) is admissibile preserving \(R'\)-freeness implies that \(R\) is eliminable preserving \(R'\)-freeness.
\end{enumerate}

\section{Sequent Calculi for \(\IL\)}
\label{sec:sequent-calculi}

In this section we introduce two sequent calculi for \(\IL\).
Let us introduce a useful convention for describing the rules of these calculi.
In case \(X \subseteq \mathbb{N}\) we will define the sets
\[
  \Phi_X := \{\phi_i \mid i \in X\}\qquad \text{and}\qquad \Psi_X := \{\psi_i \mid i \in X\}.
\]
In particular \(X\) will always be an interval like \((i,j)\), \([i,j]\) or \([i, j)\).

\begin{figure}
  \[
    \AxiomC{}
    \RightLabel{\(\text{ax}\)}
    \UnaryInfC{\(p,\Gamma \Rightarrow p, \Delta\)}
    \DisplayProof
    \qquad
    \AxiomC{}
    \RightLabel{\(\bot\text{L}\)}
    \UnaryInfC{\(\bot,\Gamma\Rightarrow \Delta\)}
    \DisplayProof
    \qquad
    \AxiomC{\(\Gamma\Rightarrow \Delta\)}
    \RightLabel{\(\bot\text{R}\)}
    \UnaryInfC{\(\Gamma\Rightarrow \bot,\Delta\)}
    \DisplayProof
  \]

  \[
    \AxiomC{\(\Gamma \Rightarrow \Delta, \phi\)}
    \AxiomC{\(\psi, \Gamma \Rightarrow \Delta\)}
    \RightLabel{\(\to\text{L}\)}
    \BinaryInfC{\(\phi \to \psi, \Gamma \Rightarrow \Delta\)}
    \DisplayProof
    \qquad
    \AxiomC{\(\phi, \Gamma \Rightarrow \Delta, \psi\)}
    \RightLabel{\(\to\text{R}\)}
    \UnaryInfC{\(\Gamma\Rightarrow \Delta, \phi \to \psi\)}
    \DisplayProof
  \]

  \[
    \AxiomC{\([\psi_i, (\psi_i,\Phi_{[0,i)}, \phi) \interp \bot \Rightarrow \Phi_{[0,i)}, \phi]_{m\ldots i\ldots 0}\)}
    \RightLabel{\(\interp_{\IL}\)}
    \UnaryInfC{\(\{\phi_i \interp \psi_i\}_{i < m}, \Gamma \Rightarrow \psi_m \interp \phi, \Delta\)}
    \DisplayProof
    \qquad
    \AxiomC{\([\psi_i, (\Phi_{[0,i)}, \phi) \interp \bot \Rightarrow \Phi_{[0,i)}, \phi]_{m...i...0}\)}
    \RightLabel{\(\interp_{\IKT}\)}
    \UnaryInfC{\(\{\phi_i \interp \psi_i\}_{i < m}, \Gamma \Rightarrow \psi_m \interp \phi, \Delta\)}
    \DisplayProof
  \]

  \[
    \AxiomC{\(\Gamma \Rightarrow \Delta, \chi\)}
    \AxiomC{\(\chi, \Gamma \Rightarrow \Delta\)}
    \RightLabel{Cut}
    \BinaryInfC{\(\Gamma \Rightarrow \Delta\)}
    \DisplayProof
  \]
  \caption{Sequent rules}
  \label{fig:rule}
\end{figure}

\begin{definition}
  We define the sequent calculus \(\fgentzenIL\) as the wellfounded calculus given by the rules of Figure~\ref{fig:rule} without rules \(\interp_{\IKT}\) and \(\cut\).

  We define the sequent calculus \(\gentzenIL\) as the local progress sequent calculus given by the rules of Figure~\ref{fig:rule} without rules \(\interp_{\IL}\) and \(\cut\).
  Progress only occurs at the premises of \(\interp_{\IKT}\).
\end{definition}

In the rules ax, \(\bot L\), \({\rightarrow} L\) and \({\rightarrow} R\) of Figure~\ref{fig:rule}  the explicitly displayed formula in the conclusion is called the \emph{principal formula}.
In \(\interp_{\IL}\) and \(\interp_{\IKT}\) the formula \(\psi_m\rhd\phi\) is called \emph{principal},
and multisets of formulas \(\Gamma\) and \(\Delta\) are called the \emph{weakening part} of these rules.
In \(\interp_{\IL}\) the formula \(\psi_m \interp \bot\) appearing at the left hand side of the first premise is called \emph{diagonal formula}.
The absence of diagonal formula at \(\interp_{\IKT}\) is what simplifies the treatment of \(\cut\) elimination.
The explicitly displayed formula in the \(\cut\) rule is called the cut formula.

The calculus \(\fgentzenIL\) is inspired from the calculus for \(\IKT\) in \cite{sasaki2003}.
It provides a simplification of the calculus defined there, as we are capable of give a much more concrete shape to the modal rule.
However, we notice a peculiar property of our calculus: the premises depend on an ordering of the \(\interp\)-formulas of the conclusion.
This implies that the same conclusion could have been obtained in multiple ways, depending on the ordering chosen.
The necessity of an order comes from the axiom (J2) of \(\IL\).

We want to notice that the rule \(\bot\mathrm{R}\) is a particular instance of weakening, which below we show to be eliminable.
Some readers may wonder why we add it to our calculus.
The reason is the following result:
\begin{lemma}
  Let \(\mathcal{C}\) be a local progress calculus with the rules \(\bot\mathrm{L}, \bot\mathrm{R}, {\to}\mathrm{L}, {\to}\mathrm{R}\).
  The rules
  \[
    \AxiomC{\(\Gamma \Rightarrow \phi, \Delta\)}
    \RightLabel{\(\neg\mathrm{L}\)}
    \UnaryInfC{\(\neg \phi, \Gamma \Rightarrow \Delta\)}
    \DisplayProof
    \qquad
    \AxiomC{\(\phi, \Gamma \Rightarrow \Delta\)}
    \RightLabel{\(\neg\mathrm{R}\)}
    \UnaryInfC{\(\Gamma \Rightarrow \neg \phi,\Delta\)}
    \DisplayProof
  \]
  \[
    \AxiomC{\(\phi, \Gamma \Rightarrow \Delta\)}
    \AxiomC{\(\psi, \Gamma \Rightarrow \Delta\)}
    \RightLabel{\(\vee\mathrm{L}\)}
    \BinaryInfC{\(\phi \vee \psi, \Gamma \Rightarrow \Delta\)}
    \DisplayProof
    \qquad
    \AxiomC{\(\Gamma \Rightarrow \phi,\psi,\Delta\)}
    \RightLabel{\(\vee\mathrm{R}\)}
    \UnaryInfC{\(\Gamma \Rightarrow \phi \vee \psi, \Delta\)}
    \DisplayProof
  \]
  \[
    \AxiomC{\(\phi,\psi,\Gamma \Rightarrow \Delta\)}
    \RightLabel{\(\wedge\mathrm{L}\)}
    \UnaryInfC{\(\phi \wedge \psi, \Gamma \Rightarrow \Delta\)}
    \DisplayProof
    \qquad
    \AxiomC{\(\Gamma \Rightarrow \phi, \Delta\)}
    \AxiomC{\(\Gamma \Rightarrow \psi, \Delta\)}
    \RightLabel{\(\wedge\mathrm{R}\)}
    \BinaryInfC{\(\Gamma \Rightarrow \phi \wedge \psi, \Delta\)}
    \DisplayProof
  \]
  are derivable.
\end{lemma}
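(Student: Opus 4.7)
The plan is to derive each of the six rules by unfolding the defined connective according to its abbreviation and then combining $\bot\mathrm{L}$, $\bot\mathrm{R}$, ${\to}\mathrm{L}$, ${\to}\mathrm{R}$. Recall that $\neg \phi = \phi \to \bot$, $\phi \vee \psi = (\phi \to \bot) \to \psi$, and $\phi \wedge \psi = (\phi \to (\psi \to \bot)) \to \bot$. So each rule becomes a question about deriving a particular implication-only sequent, and every case is a short tree.

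The simplest cases are $\neg\mathrm{L}$ and $\neg\mathrm{R}$: for $\neg\mathrm{L}$ apply $\to\mathrm{L}$ to the premise $\Gamma\Rightarrow\phi,\Delta$ together with the axiom $\bot,\Gamma\Rightarrow\Delta$ coming from $\bot\mathrm{L}$; for $\neg\mathrm{R}$ apply $\bot\mathrm{R}$ to the premise $\phi,\Gamma\Rightarrow\Delta$ to obtain $\phi,\Gamma\Rightarrow\bot,\Delta$ and then close with $\to\mathrm{R}$. The $\vee$-rules use the same two moves wrapped around $\to\mathrm{L}/\to\mathrm{R}$: for $\vee\mathrm{L}$, turn $\phi,\Gamma\Rightarrow\Delta$ into $\Gamma\Rightarrow\phi\to\bot,\Delta$ via $\bot\mathrm{R}$ followed by $\to\mathrm{R}$, and combine with the given premise $\psi,\Gamma\Rightarrow\Delta$ using $\to\mathrm{L}$; for $\vee\mathrm{R}$, combine the given premise $\Gamma\Rightarrow\phi,\psi,\Delta$ with $\bot,\Gamma\Rightarrow\psi,\Delta$ (from $\bot\mathrm{L}$) via $\to\mathrm{L}$, then apply $\to\mathrm{R}$.

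The $\wedge$-rules are slightly deeper because the unfolding has two nested implications. For $\wedge\mathrm{L}$ start from $\phi,\psi,\Gamma\Rightarrow\Delta$, apply $\bot\mathrm{R}$ and then $\to\mathrm{R}$ twice to obtain $\Gamma\Rightarrow\phi\to(\psi\to\bot),\Delta$, and finish with $\to\mathrm{L}$ against an instance of $\bot\mathrm{L}$. For $\wedge\mathrm{R}$ start from the two given premises, apply $\bot\mathrm{R}$ to each to produce a $\bot$ on the right, then use $\to\mathrm{L}$ twice (the inner step uses $\bot\mathrm{L}$ to obtain $\bot,\Gamma\Rightarrow\bot,\Delta$) to derive $\phi\to(\psi\to\bot),\Gamma\Rightarrow\bot,\Delta$, and conclude with $\to\mathrm{R}$.

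The only conceptual point worth highlighting is the role of $\bot\mathrm{R}$: without it one cannot place $\bot$ on the succedent, which is precisely what is needed in order to build negations there via $\to\mathrm{R}$. This is why $\bot\mathrm{R}$ is kept in the calculus even though it is a special case of weakening. There is no real obstacle in the proof; the work consists of exhibiting six small derivations, each at most five rule applications tall.
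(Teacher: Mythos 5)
Your proposal is correct and follows essentially the same route as the paper: unfold each defined connective according to its abbreviation and assemble the derivation from $\bot\mathrm{L}$, $\bot\mathrm{R}$, $\to\mathrm{L}$, $\to\mathrm{R}$. The only cosmetic difference is that the paper first derives $\neg\mathrm{L}$ and $\neg\mathrm{R}$ and then reuses them as intermediate steps in the $\vee$- and $\wedge$-cases, whereas you inline those two steps into their primitive-rule form; the resulting trees are the same.
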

\begin{proof}
  We have the following derivations
  \[
    \AxiomC{\(\Gamma \Rightarrow \Delta, \phi\)}
    \AxiomC{}
    \RightLabel{\(\bot\mathrm{L}\)}
    \UnaryInfC{\(\bot, \Gamma \Rightarrow \Delta\)}
    \RightLabel{\(\to\mathrm{L}\)}
    \BinaryInfC{\(\phi \to \bot, \Gamma \Rightarrow \Delta\)}
    \dashedLine
    \UnaryInfC{\(\neg \phi, \Gamma \Rightarrow \Delta\)}
    \DisplayProof
    \qquad
    \AxiomC{\(\phi, \Gamma \Rightarrow \Delta\)}
    \RightLabel{\(\bot\mathrm{R}\)}
    \UnaryInfC{\(\phi, \Gamma \Rightarrow \Delta, \bot\)}
    \RightLabel{\({\to}\mathrm{R}\)}
    \UnaryInfC{\(\Gamma \Rightarrow \Delta, \phi \to \bot\)}
    \dashedLine
    \UnaryInfC{\(\Gamma \Rightarrow \Delta, \neg \phi\)}
    \DisplayProof
  \]
  \[
    \AxiomC{\(\phi, \Gamma \Rightarrow \Delta\)}
    \RightLabel{\(\neg\mathrm{R}\)}
    \UnaryInfC{\(\Gamma \Rightarrow \neg \phi, \Delta\)}
    \AxiomC{\(\psi, \Gamma \Rightarrow  \Delta\)}
    \RightLabel{\({\to}\mathrm{L}\)}
    \BinaryInfC{\(\neg \phi \to \psi, \Gamma \Rightarrow \Delta\)}
    \dashedLine
    \UnaryInfC{\(\phi \vee \psi, \Gamma \Rightarrow \Delta\)}
    \DisplayProof
    \qquad
    \AxiomC{\(\Gamma \Rightarrow \phi, \psi, \Delta\)}
    \RightLabel{\(\neg \mathrm{L}\)}
    \UnaryInfC{\(\neg \phi, \Gamma \Rightarrow \psi, \Delta\)}
    \RightLabel{\({\to}\mathrm{R}\)}
    \UnaryInfC{\(\Gamma \Rightarrow \neg \phi \to \psi, \Delta\)}
    \dashedLine
    \UnaryInfC{\(\Gamma \Rightarrow \phi \vee \psi, \Delta\)}
    \DisplayProof
  \]
  \[
    \AxiomC{\(\phi, \psi, \Gamma \Rightarrow  \Delta\)}
    \RightLabel{\(\neg\mathrm{R}\)}
    \UnaryInfC{\(\phi, \Gamma \Rightarrow \neg\psi, \Delta\)}
    \RightLabel{\({\to}\mathrm{R}\)}
    \UnaryInfC{\(\Gamma \Rightarrow \phi \to \neg\psi, \Delta\)}
    \RightLabel{\({\neg}\mathrm{L}\)}
    \UnaryInfC{\(\neg(\phi \to \neg\psi), \Gamma \Rightarrow \Delta\)}
    \dashedLine
    \UnaryInfC{\(\phi \wedge \psi, \Gamma \Rightarrow \Delta\)}
    \DisplayProof
    \qquad
    \AxiomC{\(\Gamma \Rightarrow \phi, \Delta\)}
    \AxiomC{\(\Gamma \Rightarrow \psi, \Delta\)}
    \RightLabel{\(\neg\mathrm{L}\)}
    \UnaryInfC{\(\neg\psi, \Gamma \Rightarrow \Delta\)}
    \RightLabel{\({\to}\mathrm{L}\)}
    \BinaryInfC{\(\phi \to \neg\psi, \Gamma \Rightarrow \Delta\)}
    \RightLabel{\(\neg\mathrm{R}\)}
    \UnaryInfC{\(\Gamma \Rightarrow \neg(\phi \to \neg\psi), \Delta\)}
    \dashedLine
    \UnaryInfC{\(\Gamma \Rightarrow \phi \wedge \psi, \Delta\)}
    \DisplayProof
  \]
\end{proof}

The following lemma will be used in many proofs in the rest of this paper, as usual it is proven by induction on the size of \(\phi\).
When we use this lemma in a proof we will simply write Ax just as we write ax for the rule in Figure~\ref{fig:rule}.
\begin{lemma}\label{lm:axiom-sequent}
  Let \(\phi\) be a formula. Then in \(\fgentzenIL\) and in \(\gentzenIL\) we have that
  \[
    \vdash \phi, \Gamma \Rightarrow \phi, \Delta.
  \]
\end{lemma}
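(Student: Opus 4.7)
The plan is to proceed by induction on the size $|\phi|$, proving the claim uniformly for all multisets $\Gamma, \Delta$. Because $\fgentzenIL$ and $\gentzenIL$ differ only in their modal rule, every non-modal case produces the same derivation in both calculi, and only the $\interp$ case requires a separate treatment for the two systems.

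For the base cases, $\phi = p$ is literally an instance of $\ax$, and $\phi = \bot$ is an instance of $\botL$, whose succedent is arbitrary and so can be taken to contain $\bot$. For $\phi = \psi \to \chi$, I would apply $\toR$ followed by $\toL$, reducing the goal to the two sequents $\psi, \Gamma \Rightarrow \psi, \chi, \Delta$ and $\chi, \psi, \Gamma \Rightarrow \chi, \Delta$, each of which is an instance of the lemma for a proper subformula and hence available from the induction hypothesis.

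For the modal case $\phi = \psi \interp \chi$, I would invoke the corresponding modal rule with $m = 1$, $\phi_0 = \psi$, $\psi_0 = \chi$, $\psi_1 = \psi$, and successor formula $\chi$, so that the conclusion of the rule coincides with the target sequent and the weakening part absorbs $\Gamma$ and $\Delta$. Unfolding the indexed schema, the two premises have the shape $\psi, \Sigma \interp \bot \Rightarrow \psi, \chi$ and $\chi, \Sigma' \interp \bot \Rightarrow \chi$ for explicit multisets $\Sigma, \Sigma'$, which differ slightly between $\fgentzenIL$ and $\gentzenIL$ depending on the presence or absence of the diagonal formula $\psi_i \interp \bot$. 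In every variant the principal antecedent appears in the succedent, so each premise is an instance of the lemma at a strict subformula and is closed by the induction hypothesis.

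I do not expect any real obstacle; the only genuine bookkeeping is unfolding the indexed schema of the modal rule carefully enough to recognise its premises as smaller instances of the very same statement. The progress condition of $\gentzenIL$ is immaterial here, since all derivations produced are finite and hence vacuously satisfy the infinite-branch requirement.
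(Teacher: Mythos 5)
Your proof is correct and follows essentially the same route as the paper: induction on $|\phi|$, the usual $\toR$/$\toL$ unfolding for implications, and for $\phi=\psi\interp\chi$ an application of $\interpIL$ (resp. $\interpIKT$) with $m=1$, the single left formula and the principal right formula both taken to be $\psi\interp\chi$, so that the two premises are axiom instances for the proper subformulas $\psi$ and $\chi$. The only difference is cosmetic: the paper does not spell out the implication case, and you additionally note explicitly why the progress condition is vacuous for $\gentzenIL$.
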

\begin{proof}
  This is a simple proof by induction on \(|\phi|\).
  Cases where \(\phi\) is \(\bot\) or an atomic variable are trivial, and the case where \(\phi\) is an implication is as usual.

  Assume \(\phi = \phi_0 \interp \phi_1\). Then we provide the following proof for \(\fgentzenIL\)
  \[
    \AxiomC{}
    \RightLabel{I.H.}
    \UnaryInfC{\(\phi_0 \interp \bot, \phi_0, \phi_0 \interp \bot, \phi_1 \interp \bot \Rightarrow \phi_0, \phi_1\)}
    \AxiomC{}
    \RightLabel{I.H.}
    \UnaryInfC{\(\phi_1 \interp \bot, \phi_1, \phi_1 \interp \bot \Rightarrow \phi_1\)}
    \RightLabel{\(\interp_{\IL}\)}
    \BinaryInfC{\(\Gamma, \phi_0 \interp \phi_1 \Rightarrow \phi_0 \interp \phi_1, \Delta\)}
    \DisplayProof
  \]
  and this other proof for \(\gentzenIL\)
  \[
    \AxiomC{}
    \RightLabel{I.H.}
    \UnaryInfC{\(\phi_0, \phi_0 \interp \bot, \phi_1 \interp \bot \Rightarrow \phi_0, \phi_1\)}
    \AxiomC{}
    \RightLabel{I.H.}
    \UnaryInfC{\(\phi_1, \phi_1 \interp \bot \Rightarrow \phi_1\)}
    \RightLabel{\(\interp_{\IKT}\)}
    \BinaryInfC{\(\Gamma, \phi_0 \interp \phi_1 \Rightarrow \phi_0 \interp \phi_1, \Delta\)}
    \DisplayProof
  \]
  where we applied the rule \(\interp_{\IL}\) and the rule \(\interp_{\IKT}\), respectively, with ordering \(\phi_0 \interp \phi_1\) and principal formula \(\phi_0 \interp \phi_1\).
\end{proof}

We state the eliminability of some rules that will be useful, they are proved by showing admissibility or local admissibility (depending on the calculus) which is shown by induction on the height or local height, respectively.

\begin{lemma}\label{lm:weakening}
  Let us define the weakening rule as
  \[
    \AxiomC{\(\Gamma \Rightarrow \Delta\)}
    \RightLabel{\(\mathrm{Wk}\)}
    \UnaryInfC{\(\Gamma, \Gamma' \Rightarrow \Delta, \Delta'\)}
    \DisplayProof
  \]
  Then we have that
  \begin{enumerate}
    \item \(\Wk\) is admissible in \(\fgentzenIL (+\cut)\) preserving height.
    \item \(\Wk\) is eliminable in \(\fgentzenIL (+\cut)\).
    \item \(\Wk\) is admissible in \(\gentzenIL (+\cut)\) preserving local height and local \(\cut\)-freeness.
    \item \(\Wk\) is eliminable in \(\fgentzenIL (+\cut)\).
  \end{enumerate}
\end{lemma}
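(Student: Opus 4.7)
The four items split naturally into two groups: items 1 and 3 are (local) admissibility statements, while items 2 and 4 are eliminability statements. By the facts recorded after Lemma~\ref{lm:admissibility-and-eliminability} (fact 2 for the wellfounded case, fact 1 for the local-progress case), eliminability follows at once from (local) admissibility preserving (local) height. So the real work lies in establishing items 1 and 3, and I will address those by a standard induction.

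For item 1, I would proceed by induction on the height of the derivation $\pi$ of $\Gamma \Rightarrow \Delta$ in $\fgentzenIL(+\cut)$. In the axiomatic base cases (ax and $\botL$), the weakened sequent $\Gamma,\Gamma' \Rightarrow \Delta,\Delta'$ remains an instance of the same axiom because the principal atomic formula or $\bot$ is still present. For the propositional rules $\toL$, $\toR$, $\botR$ and for $\cut$, I apply the induction hypothesis to the premises, threading $\Gamma'$ on the left and $\Delta'$ on the right, and then reapply the same rule; the height bound is preserved because the IH does not increase height. The crucial observation is that the modal rule $\interp_{\IL}$ is nearly trivial for weakening: its premises $\psi_i, (\psi_i, \Phi_{[0,i)}, \phi)\interp\bot \Rightarrow \Phi_{[0,i)}, \phi$ do not mention $\Gamma$ or $\Delta$ at all, so the weakening formulas simply enter the weakening part of the rule. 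No induction hypothesis is needed on the premises; I just reapply the rule with $\Gamma,\Gamma'$ and $\Delta,\Delta'$ as the new weakening part. If $\Gamma'$ happens to contain $\interp$-formulas, they are placed inside the weakening component $\Gamma$ rather than among the active formulas $\phi_i \interp \psi_i$, which is permitted by the shape of the rule.

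For item 3, the argument is entirely analogous but organised by induction on the \emph{local} height in $\gentzenIL(+\cut)$. The axiomatic and propositional cases are identical. The modal rule is now $\interp_{\IKT}$, and its premises are \emph{progressing} — they live in different local fragments from the conclusion. Accordingly, the given locally $\cut$-free proofs of the premises can be kept unchanged, and the weakening again disappears into the weakening part of the rule instance at the root; local height does not grow. Local $\cut$-freeness is preserved throughout because weakening is handled by reapplying existing rules without introducing any new cut: in particular, in the cut case one merely passes the weakening to both premises and reapplies $\cut$, which does not affect whether the main local fragment contains a cut (it already did in the input).

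The main obstacle is less a technical difficulty than a bookkeeping check: one must verify that in both modal rules the schematic multisets $\Gamma$ and $\Delta$ appear only in the conclusion, so that arbitrary weakening material — including fresh $\interp$-formulas and diagonal-shaped formulas — can be absorbed into $\Gamma,\Delta$ without needing to touch the premises. Once this is observed, the induction is routine, and items 2 and 4 (the latter understood to refer to $\gentzenIL(+\cut)$) drop out as immediate corollaries of the general admissibility-implies-eliminability facts.
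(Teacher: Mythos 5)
Your proposal is correct and follows essentially the same route as the paper: item~1 by induction on height, item~3 by induction on local height, with the key observation that the modal rules \(\interp_{\IL}\) and \(\interp_{\IKT}\) carry the weakening context entirely in their conclusion (the multisets \(\Gamma,\Delta\)) so that the weakening material can be absorbed without touching the premises, and items~2 and~4 obtained from the admissibility-to-eliminability transfer (the paper phrases item~4 via local admissibility directly, but since \(\Wk\) is not a rule of \(\gentzenIL(+\cut)\), that is the same thing). You also rightly flag that item~4's ``\(\fgentzenIL\)'' is a typo for \(\gentzenIL\).
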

\begin{proof}
  The proof of 1.\ is as usual by induction on the height of the proof, and then 2.\ follows straightforwardly.
  The proof of 3.\ is by induction on the local height of the proof, since when the local height is \(0\) we have either an axiomatic sequent or an application \(\interp_{\IKT}\) rule and in both cases we can weaken straightforwardly.
  The proof of 4.\ can be done by showing local admissibility of \(\Wk\), which can be proven again by induction on the local height.
\end{proof}

\begin{lemma}\label{lm:invertibility-of-implication}
  The rules \({\to}\text{L}\), \({\to}\text{R}\) and \(\bot\mathrm{R}\) are invertible in \(\fgentzenIL (+ \cut)\), preserving height; and in \(\gentzenIL (+\cut)\), preserving local height and local \(\cut\)-freeness.
\end{lemma}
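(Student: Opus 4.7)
The plan is to prove each invertibility by induction on the height of the proof in $\fgentzenIL(+\cut)$ and on the local height in $\gentzenIL(+\cut)$, invoking the observations listed after Lemma~\ref{lm:admissibility-and-eliminability} (items~2 and~3) to reduce invertibility to admissibility of each inverse rule preserving the appropriate measure together with local $\cut$-freeness in the non-wellfounded case. Concretely, for $\toR$ we derive $\phi,\Gamma\Rightarrow\Delta,\psi$ from a proof of $\Gamma\Rightarrow\Delta,\phi\to\psi$; for $\toL$ we derive both $\Gamma\Rightarrow\Delta,\phi$ and $\psi,\Gamma\Rightarrow\Delta$ from a proof of $\phi\to\psi,\Gamma\Rightarrow\Delta$; and for $\botR$ we derive $\Gamma\Rightarrow\Delta$ from a proof of $\Gamma\Rightarrow\Delta,\bot$.

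For the base cases, the target formulas ($\phi\to\psi$, or $\bot$ on the right) are neither atomic nor equal to $\bot$ on the left, so the principal formula of any $\ax$ or $\botL$ instance must occur in the surviving part of the sequent; hence the inverted sequent is itself an instance of the same axiom. For the inductive step, if the last rule acts on the target formula as its principal formula, we simply read off the required sequent from its premises ($\phi,\Gamma\Rightarrow\Delta,\psi$ for $\toR$; the two premises of $\toL$; the premise of $\botR$). Otherwise, the target formula lies in the side context, and we apply the induction hypothesis to each premise containing it and reapply the same rule. The $\cut$ case is similar: apply the induction hypothesis to both premises and reapply $\cut$. In the non-wellfounded setting, this trivially preserves local $\cut$-freeness, since premises of a locally $\cut$-free proof that are themselves proofs are locally $\cut$-free, and no new cuts are introduced.

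The only delicate case, and the one I would identify as the potential obstacle, is the modal rule. Its resolution, however, is clean: neither $\phi\to\psi$ nor $\bot$ is a $\interp$-formula, so neither can be the principal formula $\psi_m\interp\phi'$ of an instance of $\interpIL$ or $\interpIKT$; they must therefore appear in the weakening part $\Gamma,\Delta$. One reapplies the same modal rule instance with the weakening part updated (dropping the inverted formula and, where applicable, adding $\phi$ to the antecedent and $\psi$ to the succedent). Since the premises of the rule are left untouched, no induction hypothesis is invoked in this case, (local) height is preserved, progress at the last premise of $\interpIKT$ is preserved automatically, and local $\cut$-freeness is preserved as well.
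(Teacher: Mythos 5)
Your proposal is correct and takes essentially the same approach as the paper, which simply states "induction on the height or local height." The only small quibble is that the invocation of items~2 and~3 after Lemma~\ref{lm:admissibility-and-eliminability} is not actually needed: invertibility is by definition admissibility of the inverse rules, and your direct induction on (local) height already establishes that admissibility with the stated preservation properties; the key case you correctly identify is the modal rule, where the inverted formula cannot be principal (it is not a \(\interp\)-formula) and so lives in the weakening part, which can be adjusted without touching the premises.
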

\begin{proof}
  The proof is just by induction on the height or on the local height, depending on if we are working with \(\fgentzenIL (+\cut)\) or with \(\gentzenIL (+\cut)\).
\end{proof}

\begin{lemma}\label{lm:nec-rule}
  The rule
  \[
    \AxiomC{\(\phi, \Sigma \interp \bot \Rightarrow \Sigma\)}
    \RightLabel{\(\mathrm{Nec}\)}
    \UnaryInfC{\(\Sigma \interp \bot, \Gamma \Rightarrow \phi \interp \bot, \Delta\)}
    \DisplayProof
  \]
  is admissible in \(\fgentzenIL (+ \cut)\) and in \(\gentzenIL (+\cut)\).
\end{lemma}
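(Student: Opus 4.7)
The plan is to derive the conclusion of \(\mathrm{Nec}\) by a single application of the modal rule (\(\interp_{\IL}\) in the case of \(\fgentzenIL(+\cut)\), \(\interp_{\IKT}\) in the case of \(\gentzenIL(+\cut)\)) with \(\phi \interp \bot\) as principal formula, reducing the side premises to instances of \(\bot\mathrm{L}\) and the top premise to the hypothesis of \(\mathrm{Nec}\) via weakening.

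Concretely, fix an enumeration \(\sigma_0, \ldots, \sigma_{m-1}\) of \(\Sigma\) and instantiate the rule schema with \(\phi_i \interp \psi_i = \sigma_i \interp \bot\) for \(i < m\), with \(\psi_m = \phi\), and with the succedent of the principal formula (written \(\phi\) in the schema of Figure~\ref{fig:rule}) equal to \(\bot\). The conclusion of the rule instance is then exactly \(\Sigma \interp \bot, \Gamma \Rightarrow \phi \interp \bot, \Delta\), as desired. The key observation is that, because \(\psi_i = \bot\) for every \(i < m\), each side premise (\(i < m\)) contains \(\bot\) in its antecedent and is therefore an immediate instance of \(\bot\mathrm{L}\), regardless of whether we work with \(\interp_{\IL}\) or \(\interp_{\IKT}\).

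Only the top premise (\(i = m\)) is non-trivial. For \(\interp_{\IL}\) it takes the shape
\[
\phi,\ \phi \interp \bot,\ \Sigma \interp \bot,\ \bot \interp \bot \Rightarrow \Sigma,\ \bot,
\]
and for \(\interp_{\IKT}\) it is the same sequent without the diagonal \(\phi \interp \bot\). In both cases it is obtained from the hypothesis \(\phi, \Sigma \interp \bot \Rightarrow \Sigma\) of \(\mathrm{Nec}\) by adding the extra formulas on either side, which is legitimate by the admissibility of weakening (Lemma~\ref{lm:weakening}).

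There is no real obstacle here once the indexing in the schemas of \(\interp_{\IL}\) and \(\interp_{\IKT}\) is unpacked; the construction uses no cut, so it applies uniformly to both cut-free and cut variants of the calculi. The only point that deserves care is the bookkeeping of the multisets \(\Phi_{[0,i)}\) against the chosen enumeration of \(\Sigma\), and the observation that the absence of the diagonal formula in \(\interp_{\IKT}\) makes no difference for the argument.
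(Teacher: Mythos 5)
Your proof is correct and follows essentially the same construction as the paper's: a single application of the modal rule with principal formula $\phi \interp \bot$, the side premises closed by $\bot\mathrm{L}$ because each $\psi_i$ is $\bot$, and the top premise obtained from the hypothesis by admissible weakening. The paper writes out only the $\fgentzenIL(+\cut)$ case and says the other is similar; your handling of the $\interp_{\IKT}$ case and your explicit disambiguation of the variable clash in the schema of Figure~\ref{fig:rule} match the intended argument.
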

\begin{proof}
  We show it for \(\fgentzenIL (+\cut)\), the other proof being similar.
  Assume \(\pi \vdash \phi, \Sigma \interp \bot \Rightarrow \Sigma\) in \(\fgentzenIL (+ \cut)\) and let us enumerate \(\Sigma\) as \(\{\phi_0,\ldots, \phi_{m-1}\}\) (note that then \(\Sigma = \Phi_{[0,m)}\)).
  Then, the desired proof for \(\fgentzenIL\) is
  \[
    \AxiomC{\(\pi\)}
    \noLine
    \UnaryInfC{\(\phi, \Phi_{[0,m)} \interp \bot \Rightarrow \Phi_{[0,m)}\)}
    \RightLabel{Wk}
    \UnaryInfC{\(\phi, (\phi,\Phi_{[0,m)}, \bot) \interp \bot \Rightarrow \Phi_{[0,m)},\bot\)}
    \AxiomC{}
    \RightLabel{\(\bot\)L}
    \UnaryInfC{\(\bot, \ldots\Rightarrow \ldots\)}
    \AxiomC{\(\cdots\)}
    \RightLabel{\(\interp_{\IL}\)}
    \TrinaryInfC{\(\Sigma \interp \bot, \Gamma \Rightarrow \phi \interp \bot, \Delta\)}
    \DisplayProof
  \]
  where in the right-most dots we are omitting some proofs by \(\bot\mathrm{L}\)
  and we applied \(\interp_{\IL}\) with ordering \(\phi_0 \interp \bot, \ldots, \phi_{m-1} \interp \bot\) and prinicipal formula \(\phi \interp \bot\).
\end{proof}
%\begin{proof}
%  Assume \(\pi \vdash \phi, \Sigma \interp \bot \Rightarrow \Sigma\) in \(\fgentzenIL (+ \cut)\) and \(\tau\) proves the same sequent in \(\gentzenIL (+ \cut)\), and let us enumerate \(\Sigma = \{\phi_0,\ldots, \phi_{m-1}\}\).
%  Then, the desired proof for \(\fgentzenIL\) is
%  \[
%    \AxiomC{\(\pi\)}
%    \noLine
%    \UnaryInfC{\(\phi, \Sigma_m \interp \bot \Rightarrow \Sigma_m\)}
%    \RightLabel{Wk}
%    \UnaryInfC{\(\phi, (\phi,\Sigma_m, \bot) \interp \bot \Rightarrow \Sigma_m,\bot\)}
%    \AxiomC{}
%    \RightLabel{\(\bot\)L}
%    \UnaryInfC{\(\bot, (\bot,\Sigma_{m-1}, \bot) \interp\bot \Rightarrow \Sigma_{m-1}, \bot\)}
%    \AxiomC{\(\cdots\)}
%    \RightLabel{\(\interp_{\mathrm{IL}}\)}
%    \TrinaryInfC{\(\Sigma \interp \bot, \Gamma \Rightarrow \phi \interp \bot, \Delta\)}
%    \DisplayProof
%  \]
%  where the in the dots we are omitting some proofs by \(\bot\mathrm{L}\).
%  The desired proof for \(\gentzenIL\) is
%  \[
%    \AxiomC{\(\pi\)}
%    \noLine
%    \UnaryInfC{\(\phi, \Sigma_m \interp \bot \Rightarrow \Sigma_m\)}
%    \RightLabel{Wk}
%    \UnaryInfC{\(\phi, (\Sigma_m, \bot) \interp \bot \Rightarrow \Sigma_m,\bot\)}
%    \AxiomC{}
%    \RightLabel{\(\bot\)L}
%    \UnaryInfC{\(\bot, (\Sigma_{m-1}, \bot) \interp\bot \Rightarrow \Sigma_{m-1}, \bot\)}
%    \AxiomC{\(\cdots\)}
%    \RightLabel{\(\interp_{\mathrm{IK4}}\)}
%    \TrinaryInfC{\(\Sigma \interp \bot, \Gamma \Rightarrow \phi \interp \bot, \Delta\)}
%    \DisplayProof
%  \]
%  where the in the dots we are omitting some proofs by \(\bot\mathrm{L}\).
%\end{proof}

Finally, we note some nice properties of the \(\cut\)-free calculi.

\begin{proposition}\label{lm:no-branch-condition}
  Any preproof of \(\gentzenIL\) is a proof of \(\gentzenIL\).
\end{proposition}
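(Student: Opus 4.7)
The plan is to show that in every preproof of $\gentzenIL$, every infinite branch must cross infinitely many applications of $\interp_{\IKT}$; since progress occurs at every premise of $\interp_{\IKT}$, this immediately gives that every infinite branch progresses infinitely often, so every preproof is a proof.

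The key observation is that the only non-propositional rule of $\gentzenIL$ is $\interp_{\IKT}$, and all the propositional rules (together with $\bot\mathrm{R}$) have premises that are strictly smaller than their conclusion with respect to a suitable measure. Concretely, I would take as measure the sequent size $\mu(\Gamma\Rightarrow \Delta):=|\Gamma\Rightarrow\Delta|$ already defined in the preliminaries. A direct inspection of the rules in Figure~\ref{fig:rule} other than $\interp_{\IKT}$ (and the axioms ax, $\bot\mathrm{L}$, which have no premises at all) shows that $\mu$ strictly decreases from conclusion to premise: for $\bot\mathrm{R}$ the premise drops the formula $\bot$, while for $\to\mathrm{L}$ and $\to\mathrm{R}$ the principal formula $\phi\to\psi$ of size $|\phi|+|\psi|+1$ is replaced in each premise by strictly smaller subformulas.

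Now suppose, for contradiction, that some preproof $\pi$ of $\gentzenIL$ has an infinite branch $b$. If $b$ contains only finitely many applications of $\interp_{\IKT}$, then from some index $n$ onwards $b$ uses only the rules ax, $\bot\mathrm{L}$, $\bot\mathrm{R}$, $\to\mathrm{L}$, $\to\mathrm{R}$. But the axiomatic rules ax and $\bot\mathrm{L}$ have no premises (so the branch would stop there), and the remaining rules strictly decrease $\mu$. Hence along $b$ from position $n$ we would obtain an infinite strictly decreasing sequence of natural numbers $\mu(b\restricts n)>\mu(b\restricts(n+1))>\cdots$, which is impossible. Therefore $b$ contains infinitely many applications of $\interp_{\IKT}$.

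Since progress in $\gentzenIL$ occurs exactly at the premises of $\interp_{\IKT}$, and every premise of $\interp_{\IKT}$ is a progressing premise, each such application contributes a progress point along $b$. Thus $\{i\in\mathbb{N}\mid b\text{ progresses at }i\}$ is infinite, which is precisely the condition required for $\pi$ to be a proof. There is no real obstacle here beyond selecting the correct measure and verifying its decrease; the only item worth double-checking is that in the definition of $\gentzenIL$ all premises of $\interp_{\IKT}$ (and not just some distinguished one) are marked as progressing, which is already stipulated in the definition of the calculus.
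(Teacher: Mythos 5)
Your argument is correct and follows essentially the same route as the paper: observe that the non-modal rules strictly decrease a sequent-size measure, so any infinite branch must pass through $\interp_{\IKT}$ infinitely often, and every such crossing is a progress point. The only cosmetic difference is that you use the numerical sum $|\Gamma\Rightarrow\Delta|$ where the paper invokes the multiset of formula sizes, but both measures are well-founded and decrease in exactly the same way.
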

\begin{proof}
  The rules \(\to \mathrm{L}\), \(\to \mathrm{R}\) and \(\bot\mathrm{R}\) reduce the size of the sequent (which is just the multiset of the sizes of each formula ocurrence in it).
  So any infinite branch in a preproof must have infinitely many instances of \(\interp_{\IKT}\).
\end{proof}

Due to the shape of the rules we need to slightly change the usual definition of subformula set.
This definition allows to establish the subformula property for \(\fgentzenIL\) and \(\gentzenIL\).

\begin{definition}
  Let \(\phi\) be a formula. We define the \emph{set} \(\sub(\phi)\) as follows:
  \begin{align*}
    &\sub(p) = \{p\}, \qquad \sub(\bot) = \{\bot\}, \\
    &\sub(\phi \to \psi) = \{\phi \to \psi\} \cup \sub(\phi) \cup \sub(\psi), \\
    &\sub(\phi \interp \psi) = \{\phi \interp \psi, \phi \interp \bot, \psi \interp \bot, \bot\} \cup \sub(\phi) \cup \sub(\psi).
  \end{align*}
  If \(\Gamma\) is a multiset, \(\sub(\Gamma)= \bigcup\{\sub(\phi) \mid \phi \in \Gamma\}\); and if \(S = (\Gamma \Rightarrow \Delta)\) is a sequent, then \(\sub(S) = \sub(\Gamma \cup \Delta)\).
\end{definition}

\begin{proposition}[Subformula property]\label{lem:sub}
  Let \(\pi \vdash S\) in \(\gentzenIL\) or \(\fgentzenIL\) and \(\phi\) be a formula occurring in \(\pi\).
  Then \(\phi \in \sub(S)\).
\end{proposition}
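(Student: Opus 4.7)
The plan is to first establish a local version of the claim at the level of single rule instances, and then lift it to the whole (possibly non-wellfounded) proof by an induction on the depth of the node at which $\phi$ occurs. More precisely, I would prove the following local subformula property: for every rule instance $(S_0,\ldots,S_{n-1},S)$ of a rule of $\gentzenIL$ or $\fgentzenIL$, every formula appearing in some premise $S_i$ already belongs to $\sub(S)$. Given this, the statement follows by induction on $|w|$, where $w$ is the node where $\phi$ occurs: if $w$ is the root then $\phi$ appears in $S$ directly, and otherwise $w$ sits above some premise $S_i$ of a rule whose conclusion is $S'$, so by the induction hypothesis $\phi \in \sub(S')$ and by the local property $\sub(S') \subseteq \sub(S)$. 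Since $|w|$ is a finite natural number even when $\pi$ is non-wellfounded, this argument applies uniformly to both calculi.

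For the local step, the propositional rules (ax, $\bot\mathrm{L}$, $\bot\mathrm{R}$, ${\to}\mathrm{L}$, ${\to}\mathrm{R}$) are routine: every formula in a premise is either an immediate subformula of the principal formula shown in the conclusion, or it lies in the weakening contexts $\Gamma,\Delta$, which already sit inside the conclusion. The real work is in the two modal rules.

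For $\interp_{\IL}$ with conclusion $\{\phi_j \interp \psi_j\}_{j<m}, \Gamma \Rightarrow \psi_m \interp \phi, \Delta$ and $i$-th premise
\[
  \psi_i, (\psi_i,\Phi_{[0,i)}, \phi) \interp \bot \Rightarrow \Phi_{[0,i)}, \phi,
\]
I would check each formula in the premise against the clauses defining $\sub$. The antecedent-side formula $\psi_i$ with $i<m$ lies in $\sub(\phi_i \interp \psi_i)$, while $\psi_m$ and $\phi$ both lie in $\sub(\psi_m \interp \phi)$. For the block $(\psi_i,\Phi_{[0,i)}, \phi) \interp \bot$: the diagonal formula $\psi_i \interp \bot$ lies in $\sub(\phi_i \interp \psi_i)$ when $i<m$ and in $\sub(\psi_m \interp \phi)$ when $i=m$, using the clause $\psi \interp \bot \in \sub(\phi \interp \psi)$; each $\phi_j \interp \bot$ with $j<i$ lies in $\sub(\phi_j \interp \psi_j)$; and $\phi \interp \bot$ lies in $\sub(\psi_m \interp \phi)$. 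The rule $\interp_{\IKT}$ has strictly fewer formulas in its premises (the diagonal $\psi_i \interp \bot$ is not present), so the same checks apply \emph{a fortiori}.

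The only real obstacle is the bookkeeping attached to the modal rule: one has to make sure that the enriched subformula operator $\sub$, which adjoins the auxiliary formulas $\phi \interp \bot$, $\psi \interp \bot$ and $\bot$ to the ordinary subformulas of a $\interp$-formula, is exactly rich enough to absorb every formula that $\interp_{\IL}$ and $\interp_{\IKT}$ introduce in their premises, including those hidden inside the antecedent block $(\psi_i, \Phi_{[0,i)}, \phi) \interp \bot$. This is precisely why the definition of $\sub(\phi \interp \psi)$ was tailored the way it was, so once the case analysis above is carried out the property falls out immediately.
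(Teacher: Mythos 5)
Your overall plan --- a local subformula property for each rule instance, lifted by induction on $|w|$ --- is the same as the paper's one-line sketch, and your verification of the local property for the modal rules is carried out correctly. The gap is in the lifting step. What you actually prove locally is that each formula occurring in a premise $S_i$ of a rule instance with conclusion $S'$ belongs to $\sub(S')$; call this $S_i \subseteq \sub(S')$. But your induction quietly uses the strictly stronger containment $\sub(S_i) \subseteq \sub(S')$, and that stronger statement is false in general, because the paper's operator $\sub$ is not closed under taking $\sub$ again: $\psi \interp \bot \in \sub(\phi \interp \psi)$, yet $\sub(\psi \interp \bot)$ contains $\bot \interp \bot$, which is not an element of $\sub(\phi \interp \psi)$ for generic $\phi$ and $\psi$. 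So the weak local property does not iterate along the path from the root to $w$, and the induction does not close as written.

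The repair is to change the inductive invariant to ``every formula of the sequent at $w$ lies in $\sub(S)$'' and to prove a relativized local lemma: if every formula of a rule instance's conclusion lies in $\sub(S)$, then so does every formula of its premises. That lemma is not a formal consequence of your per-instance check; it forces one to distinguish, for a $\interp$-formula $\chi \in \sub(S)$, whether $\chi$ is a genuine subformula of some formula of $S$ or one of the auxiliary formulas $\sigma \interp \bot$ adjoined by $\sub$, because in the latter case the succedent of $\chi$ is $\bot$ and the companion $\bot \interp \bot$ that the modal rules generate from it is not obviously in $\sub(S)$. For $\interpIKT$ this can be discharged by noting that, reading the rule bottom-up, the right-hand side of any reachable sequent only ever carries genuine subformulas of $S$, so the problematic case never arises; for $\interpIL$, whose diagonal formula places $\psi_i \interp \bot$ on the left, the bookkeeping is genuinely more delicate. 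Your write-up correctly identifies the modal rules as ``the real work'' but stops one step short of where the subtlety actually lies.
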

\begin{proof}
  The proof is trivial by observing the shape of the rules and using an induction in the length of the node where \(\phi\) is taken from.
\end{proof}

\section{Transformations between calculi}
\label{sec:transformations}

In this section we will show how to transform proofs between the Hilbert calculus, the wellfounded sequent calculus and the non-wellfounded sequent calculus.
In order to show the equivalence among all the system we will need to also show \(\cut\) elimination.
This last step will be done in the next section.

\subsection{Equivalence of Hilbert calculus and \(\fgentzenIL + \cut\)}
We show the equivalence of Hilbert style proofs in \(\IL\) and sequent proofs in the calculus \(\fgentzenIL+\cut\).
First we remember the interpretation of sequents as formulas.

\begin{definition}
  Given a sequent \(S = (\Gamma \Rightarrow \Delta)\), 
  we define \(\toWff{S} = \left(\bigwedge \Gamma \to \bigvee \Delta\right)\).
\end{definition}

\begin{lemma}\label{lm:from-hilbert-to-gentzen}
  Let \(\IL \vdash \phi\), then \(\fgentzenIL + \cut \vdash {\Rightarrow \phi}\).
\end{lemma}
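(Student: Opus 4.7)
The plan is to proceed by induction on the length of the Hilbert-style derivation of $\phi$ in $\IL$, with the base cases consisting of one sequent derivation per axiom scheme and the inductive cases treating modus ponens and necessitation.

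For modus ponens, if by induction I have $\fgentzenIL + \cut$-proofs of $\Rightarrow \phi \to \psi$ and of $\Rightarrow \phi$, then invertibility of $\to\mathrm{R}$ (Lemma~\ref{lm:invertibility-of-implication}) turns the first into a proof of $\phi \Rightarrow \psi$, and a single application of $\cut$ with the second proof yields $\Rightarrow \psi$. For necessitation, given $\Rightarrow \phi$, I first derive $\neg \phi \Rightarrow$ by applying $\to\mathrm{L}$ to $\Rightarrow \phi$ and the axiomatic sequent $\bot \Rightarrow$, and then apply the admissible Nec rule of Lemma~\ref{lm:nec-rule} with $\Sigma = \varnothing$ to obtain $\Rightarrow \neg \phi \interp \bot$, that is, $\Rightarrow \nec \phi$.

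For the axioms I handle the propositional tautologies by observing that $\fgentzenIL$ contains a complete classical propositional fragment: root-first proof search using the invertible rules $\to\mathrm{L}$, $\to\mathrm{R}$, $\bot\mathrm{R}$ terminates with axiomatic leaves whenever the starting sequent is a tautology, so each classical tautology $\tau$ yields a proof of $\Rightarrow \tau$. The modal axioms K, 4, J1--J5 and L are each derived directly as sequents using a single carefully chosen instance of $\interp_{\IL}$ (with a suitable ordering of the antecedent $\interp$-formulas to trigger the right diagonal formula), combined with Lemma~\ref{lm:axiom-sequent} for the resulting propositional leaves, admissible weakening (Lemma~\ref{lm:weakening}) to pad premises, and Lemma~\ref{lm:nec-rule} where necessary to produce boxed conclusions. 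For instance, for (J2) one takes the ordering that places $\phi \interp \chi$ before $\chi \interp \psi$ so that the premise corresponding to $\chi \interp \psi$ has the right diagonal formula to close by Ax, while the other premise uses the diagonal $\chi \interp \bot$ together with Ax.

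I expect the main obstacle to be the derivation of L\"ob's axiom $\nec(\nec \phi \to \phi) \to \nec \phi$. Here the goal sequent, after unfolding $\nec$ via $\nec \psi = \neg \psi \interp \bot$ and inverting the outer implications, becomes a sequent whose only $\interp$-formulas on the left are of the form $\chi \interp \bot$, and whose right-hand side contains $\neg \phi \interp \bot$. Applying $\interp_{\IL}$ to this conclusion produces premises whose left side contains both $\neg \phi$ and the diagonal $\neg \phi \interp \bot$; the delicate step is choosing the ordering so that each premise can be closed using Ax together with the invertibility of the propositional rules to break down the occurrence of $\neg \nec \phi \vee \phi$ that the diagonal formula effectively introduces. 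The bookkeeping of orderings in $\interp_{\IL}$, which must encode the transitivity pattern of (J2) already inside a single rule instance, is what makes these derivations more intricate than in the standard $\GL$ case.
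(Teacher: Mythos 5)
Your plan matches the paper's proof exactly: induction on derivation length, one sequent derivation per axiom (the paper gives these explicitly in Section~\ref{sec:completeness-of-gil}), invertibility of $\to\mathrm{R}$ plus $\cut$ for modus ponens, and the admissible Nec rule of Lemma~\ref{lm:nec-rule} (with $\Sigma=\varnothing$) for necessitation. One concrete slip: for (J2), reading the goal sequent as $\phi\interp\chi,\chi\interp\psi\Rightarrow\phi\interp\psi$, the ordering in $\interp_{\IL}$ must place $\chi\interp\psi$ \emph{first}, not $\phi\interp\chi$. The $i=0$ premise has the form $\psi_0,(\psi_0,\phi')\interp\bot\Rightarrow\phi'$ and only closes by Ax when $\psi_0$ coincides with the principal formula's succedent $\psi$, which forces $\phi_0\interp\psi_0=\chi\interp\psi$; with your ordering that premise becomes $\chi,(\chi,\psi)\interp\bot\Rightarrow\psi$, which is not derivable. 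Also, L\"ob's axiom closes more cleanly than your sketch suggests: after applying $\interp_{\IL}$ to $\neg(\nec\phi\to\phi)\interp\bot\Rightarrow\neg\phi\interp\bot$, the $i=1$ premise carries the diagonal $\neg\phi\interp\bot$ (that is, $\nec\phi$) on its left, and after $\neg\mathrm{L}$, $\neg\mathrm{R}$, $\to\mathrm{L}$ the sub-branch requesting $\nec\phi$ on the right closes against that diagonal by Ax --- there is no disjunction to unravel.
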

\begin{proof}
  By induction on the length of the Hilbert-style proof of \(\phi\).
  The case of classical propositional tautologies is trivial, the proofs in \(\fgentzenIL\) of the modal axioms are easy to construct (the interested reader can consult Section~\ref{sec:completeness-of-gil} in the Appendix).
  For modus ponens case it suffices to use Lemma~\ref{lm:invertibility-of-implication} and \(\cut\).
  For necessitation case it suffices to use Lemma~\ref{lm:nec-rule}.
\end{proof}

The converse of the previous lemma is a simple consequence of the following.

\begin{theorem}\label{tm:IL-and-fgentzenILcut}
  For any sequent \(S\), \( \IL \vdash \toWff{S} \text{ if and only if } \fgentzenIL + \cut \vdash S \).
\end{theorem}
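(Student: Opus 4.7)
The plan is to handle the two directions separately. For the forward direction, suppose $\IL \vdash \toWff{S}$ with $S = (\Gamma \Rightarrow \Delta)$, i.e., $\IL \vdash \bigwedge \Gamma \to \bigvee \Delta$. Lemma~\ref{lm:from-hilbert-to-gentzen} gives $\fgentzenIL + \cut \vdash {}\Rightarrow \bigwedge \Gamma \to \bigvee \Delta$. I would then construct a derivation of $\bigwedge \Gamma \to \bigvee \Delta, \Gamma \Rightarrow \Delta$ by applying $\to\mathrm{L}$ to derivations of $\Gamma \Rightarrow \bigwedge \Gamma, \Delta$ and $\bigvee \Delta, \Gamma \Rightarrow \Delta$ (both routine using Lemma~\ref{lm:axiom-sequent}, the derivable $\wedge,\vee$-rules, and Lemma~\ref{lm:weakening}). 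A single application of $\cut$ with the formula $\bigwedge \Gamma \to \bigvee \Delta$ then yields $\Gamma \Rightarrow \Delta$.

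For the reverse direction I would proceed by induction on the height of a $\fgentzenIL + \cut$-proof $\pi$ of $S$. The axioms $\ax$, $\bot\mathrm{L}$ translate to classical tautologies, and the rules $\bot\mathrm{R}$, $\to\mathrm{L}$, $\to\mathrm{R}$ and $\cut$ translate to classical tautologies relating the translations of the premises to the translation of the conclusion, so these cases follow by propositional reasoning inside $\IL$. The only nontrivial case is the modal rule $\interp_{\IL}$.

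For that case, fix an instance of $\interp_{\IL}$ and write $\Sigma_i = \Phi_{[0,i)} \cup \{\phi\}$ for $i \in [0,m]$. The translation of the $i$-th premise is
\[
\IL \vdash \psi_i \wedge (\psi_i \interp \bot) \wedge \bigwedge(\Sigma_i \interp \bot) \to \bigvee \Sigma_i.
\]
Rewriting $\psi_i \wedge (\psi_i \interp \bot)$ as $\bnec \psi_i$, Löb's rule (Lemma~\ref{lm:basics-of-IL}.2) yields $\IL \vdash \bnec \psi_i \interp \bigvee \Sigma_i$. Combining this with $\IL \vdash \psi_i \interp \bnec \psi_i$ from Lemma~\ref{lm:basics-of-IL}.3 and axiom (J2) gives
\[
\IL \vdash \psi_i \interp (\phi_0 \vee \cdots \vee \phi_{i-1} \vee \phi) \qquad \text{for every } i \in [0,m].
\]
From here I would show by induction on $i$ that $\IL \vdash \phi_i \interp \phi$ for each $i < m$: once $\phi_0 \interp \phi, \ldots, \phi_{i-1} \interp \phi$ are available, repeated use of (J3) (together with Lemma~\ref{lm:basics-of-IL}.1 for $\phi \interp \phi$) turns the disjunction $\phi_0 \vee \cdots \vee \phi_{i-1} \vee \phi$ into $\phi$, then (J2) gives $\psi_i \interp \phi$, and a final (J2) with the hypothesis $\phi_i \interp \psi_i$ delivers $\phi_i \interp \phi$. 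Applying the same disjunction-folding step at $i = m$ yields $\psi_m \interp \phi$, and the outer weakening by $\Gamma$ and $\Delta$ is propositional, giving the translation of the conclusion.

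The main obstacle is precisely this inductive chain in the $\interp_{\IL}$ case: the shape of the premises is tailored so that the diagonal formula $\psi_i \interp \bot$ combines with $\psi_i$ on the left into $\bnec \psi_i$ and makes Löb's rule in $\IL$ directly applicable, while the ordering of the $\interp$-formulas in the conclusion is exactly what makes the (J2)/(J3) bookkeeping go through. Once this case is in place, the induction closes immediately.
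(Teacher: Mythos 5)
Your proposal is correct and takes essentially the same approach as the paper. The only cosmetic difference is in the forward direction: you unwrap $\toWff{S}$ by an explicit cut against a derivation of $\bigwedge\Gamma\to\bigvee\Delta,\Gamma\Rightarrow\Delta$, whereas the paper invokes invertibility of $\to\mathrm{L}$, $\to\mathrm{R}$, $\bot\mathrm{R}$ to the same effect; your treatment of the $\interp_{\IL}$ case (Löb's rule via Lemma~\ref{lm:basics-of-IL}.2, $\psi\interp\bnec\psi$ from Lemma~\ref{lm:basics-of-IL}.3, and the nested (J2)/(J3) bookkeeping on the ordered $\interp$-formulas, merely tracking $\phi_i\interp\phi$ where the paper tracks $\psi_i\interp\phi$) is precisely the paper's argument.
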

\begin{proof}
  Let \(S = (\Gamma\Rightarrow\Delta)\).
  Using Lemma~\ref{lm:from-hilbert-to-gentzen}, we have that \(\IL \vdash \toWff{S}\) implies \(\fgentzenIL + \cut \vdash {\Rightarrow \bigwedge \Gamma \to \bigvee \Delta}\). 
  Then, using invertibility of \(\to\mathrm{L}\), \(\to\mathrm{R}\) and \(\bot\mathrm{R}\), we obtain \(\fgentzenIL + \cut \vdash \Gamma \Rightarrow \Delta\).
  
  For the other direction, let \(\pi \vdash S\) in \(\fgentzenIL + \cut\). 
  We proceed by induction on the height of \(\pi\) and cases in the last rule of \(\pi\).
  The cases where the last rule of \(\pi\) is either \(\mathrm{ax}, \bot\mathrm{L}, \bot\mathrm{R}, {\to}\mathrm{L}, {\to}\mathrm{R}, \cut\) follow from simple propositional tautologies.
  So we focus on the \(\interp_{\mathrm{IL}}\) case.
  Then \(\pi\) is of shape
  \[
    \AxiomC{\(\begin{bmatrix}
    		\pi_i\\
        \psi_i, (\psi_i, \Phi_{[0,i)}, \phi) \interp \bot \Rightarrow \Phi_{[0,i)}, \phi
    		\end{bmatrix}_{m...i...0}\)}
    \RightLabel{\(\interp_{\mathrm{IL}}.\)}
    \UnaryInfC{\(\{\phi_i \interp \psi_i\}_{i < m}, \Gamma \Rightarrow \psi_m \interp \phi, \Delta\)}
    \DisplayProof
  \]
  By the induction hypothesis we get
  \[
    \IL \vdash (\psi_i \interp \bot) \wedge \psi_i \wedge \bigwedge (\Phi_{[0,i)} \interp \bot) \wedge (\phi \interp \bot) \to \bigvee \Phi_{[0,i)} \vee \phi,\qquad \text{for \(i \leq m\)},
  \]
  so by L\"ob's rule we have
  \( \IL \vdash ((\psi_i \interp \bot) \wedge \psi_i) \interp \left(\bigvee \Phi_{[0,i)} \vee \phi\right),\) or equivalently \(\IL \vdash \bnec \psi_i \interp \left(\bigvee \Phi_{[0,i)} \vee \phi\right)\) \text{for \(i \leq m\)}.
  Using Lemma~\ref{lm:basics-of-IL} with (J2) we have
  \( \IL \vdash \psi_i \interp \left(\bigvee \Phi_{[0,i)} \vee \phi\right) \), for \(i \leq m\).
  By induction on \(i \leq m\) we show that \(\IL \vdash (\bigwedge_{k < m} \phi_k \interp \psi_k) \to \psi_i \interp \phi\), 
  so assume \(\IL \vdash (\bigwedge_{k < m} \phi_k \interp \psi_k) \to \psi_j \interp \phi\), for \(j < i\).
  Using (J3) we get \(\IL \vdash (\bigwedge_{k < m} \phi_k \interp \psi_k) \to (\bigvee_{j < i} \psi_j) \interp \phi\) and by (J2) \(\IL \vdash (\bigwedge_{k < m} \phi_k \interp \psi_k) \to (\bigvee \Phi_{[0,i)}) \interp \phi\).
  Also \(\IL \vdash \phi \interp \phi\),
  so we get \(\IL \vdash (\bigwedge_{k < m} \phi_k \interp \psi_k) \to (\bigvee \Phi_{[0,i)} \vee \phi) \interp \phi\).
  But \(\IL \vdash \psi_i \interp \left(\bigvee \Phi_{[0,i)}\vee \phi\right)\) so by the use of (J2) we conclude the desired \(\IL \vdash (\bigwedge_{k < m} \phi_k \interp \psi_k) \to \psi_i \interp \phi\).
\end{proof}

\subsection{From \(\fgentzenIL + \cut\) to \(\gentzenIL + \cut\)}

We show that using L\"ob's rule (formulated in the language of \(\IL\)) we can go from wellfounded proofs to non-wellfounded proofs (assuming \(\cut\)).

\begin{lemma}
  We have that the rule
  \[
    \AxiomC{\(\psi, (\psi, \Sigma) \interp \bot \Rightarrow \Sigma\)}
    \RightLabel{L\"ob}
    \UnaryInfC{\(\psi, \Sigma \interp \bot \Rightarrow \Sigma\)}
    \DisplayProof
  \]
  is admissible in \(\fgentzenIL + \cut\).
\end{lemma}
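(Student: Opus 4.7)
The plan is to derive the L\"ob conclusion by combining a single application of $\interp_{\IL}$ with a single $\cut$ on the formula $\psi \interp \bot$, reusing the given proof $\pi$ of $\psi, (\psi, \Sigma) \interp \bot \Rightarrow \Sigma$ as the right cut-premise.

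Concretely, write $\Sigma = \{\sigma_0, \ldots, \sigma_{m-1}\}$ and apply $\interp_{\IL}$ to produce the sequent $\psi, \Sigma \interp \bot \Rightarrow \psi \interp \bot, \Sigma$, choosing the ordering $(\phi_i \interp \psi_i) = (\sigma_i \interp \bot)$ for $i < m$, the principal right formula $(\psi_m \interp \phi) = (\psi \interp \bot)$ (so $\psi_m = \psi$, $\phi = \bot$), and weakening parts $\Gamma = \{\psi\}$, $\Delta = \Sigma$. Of the $m+1$ premises generated, the first $m$ (where $\psi_i = \bot$) have the form $\bot, (\bot, \sigma_0, \ldots, \sigma_{i-1}, \bot) \interp \bot \Rightarrow \sigma_0, \ldots, \sigma_{i-1}, \bot$ and are immediate via $\bot\mathrm{L}$. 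The last premise (for $i = m$) is $\psi, (\psi, \Sigma, \bot) \interp \bot \Rightarrow \Sigma, \bot$, which is obtained from $\pi$ by admissible weakening (Lemma~\ref{lm:weakening}) with $\bot \interp \bot$ added on the left and $\bot$ added on the right.

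A single cut against $\pi$ on the cut formula $\psi \interp \bot$,
\[
\AxiomC{\(\psi, \Sigma \interp \bot \Rightarrow \psi \interp \bot, \Sigma\)}
\AxiomC{\(\pi\)}
\noLine
\UnaryInfC{\(\psi, \psi \interp \bot, \Sigma \interp \bot \Rightarrow \Sigma\)}
\RightLabel{\(\cut\)}
\BinaryInfC{\(\psi, \Sigma \interp \bot \Rightarrow \Sigma\)}
\DisplayProof
\]
then delivers the desired sequent. I do not expect a genuine obstacle; the only real insight is recognizing that the diagonal slot of $\interp_{\IL}$, which forces $\psi_m \interp \bot$ onto the left of the last premise alongside $\psi_m$, reproduces exactly the shape of $\pi$'s conclusion, so that the auxiliary $\interp_{\IL}$-derivation and $\pi$ fit together across one cut while all other premises are trivialised by having $\bot$ on the left.
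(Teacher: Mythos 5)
Your proposal is correct and follows exactly the paper's proof: apply $\interp_{\IL}$ with the $\sigma_i \interp \bot$ as the ordered context, $\psi \interp \bot$ as the principal formula (so its diagonal-premise matches the weakened $\pi$), close the remaining premises by $\bot\mathrm{L}$, and cut the resulting $\psi, \Sigma \interp \bot \Rightarrow \psi \interp \bot, \Sigma$ against $\pi$ on the cut formula $\psi \interp \bot$. The only difference is that you spell out the ordering and weakening parts explicitly, whereas the paper leaves the ordering arbitrary.
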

\begin{proof}
  Let \(\pi \vdash \psi, (\psi, \Sigma) \interp \bot \Rightarrow \Sigma\) in \(\fgentzenIL + \cut\).
  By admissibility of weakening we obtain a proof \(\pi' \vdash \psi, (\psi, \Sigma,\bot) \interp \bot \Rightarrow \Sigma, \bot\).
  The desired proof is
  \[
    \AxiomC{\(\pi'\)}
    \noLine
    \UnaryInfC{\(\psi, (\psi, \Sigma,\bot) \interp \bot \Rightarrow \Sigma, \bot\)}
    \AxiomC{}
    \RightLabel{\(\botL\)}
    \UnaryInfC{\(\bot, \ldots \Rightarrow \ldots\)}
    \AxiomC{\(\ldots\)}
    \RightLabel{\(\interpIL\)}
    \TrinaryInfC{\(\psi, \Sigma \interp \bot \Rightarrow \Sigma, \psi \interp \bot\)}
    \AxiomC{\(\pi\)}
    \noLine
    \UnaryInfC{\(\psi, (\psi, \Sigma) \interp \bot \Rightarrow \Sigma\)}
    \RightLabel{\(\cut\)}
    \BinaryInfC{\(\psi, \Sigma \interp \bot \Rightarrow \Sigma\)}
    \DisplayProof
  \]
  where \(\interpIL\) has been applied with any order of the formulas \(\Sigma \interp \bot\) and \(\psi \interp \bot\) as the main formula.
  The premises of that rule instance hidden in the ellipsis are proven using the \(\bot\mathrm{L}\) rule.
\end{proof}

\begin{theorem}\label{tm:fgentzenILcut-to-genteznILcut}
  Let \(S\) be a sequent.
  If \(\fgentzenIL + \cut \vdash S\), then \(\gentzenIL +\cut \vdash S\).
\end{theorem}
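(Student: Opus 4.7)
My plan is to invoke the method of translations from Section~\ref{subsub:translations} and extend a single corecursive step into a full translation from $\fgentzenIL + \cut$ to $\gentzenIL + \cut$. Given a proof $\pi \vdash S$ in $\fgentzenIL + \cut$, the corecursive step would walk down from the root of $\pi$, building a local fragment of $\gentzenIL + \cut$ with the same conclusion $S$. The rules $\mathrm{ax}$, $\bot\mathrm{L}$, $\bot\mathrm{R}$, ${\to}\mathrm{L}$, ${\to}\mathrm{R}$ and $\cut$ coincide in both calculi and would simply be copied over. The walk continues upward until it reaches either an axiomatic leaf or an instance of $\interp_{\IL}$, at which point the rule is replaced by the corresponding $\interp_{\IKT}$ instance with the same principal formula and ordering. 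Since each premise of $\interp_{\IKT}$ is progressing in $\gentzenIL$, it must be a leaf of the local fragment. Wellfoundedness of $\pi$ guarantees that the walk terminates, so the local fragment is a finite tree as required.

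The crucial step is to supply, for each non-axiomatic leaf $\psi_i, (\Phi_{[0,i)}, \phi) \interp \bot \Rightarrow \Phi_{[0,i)}, \phi$ of the fragment, a continuation proof in $\fgentzenIL + \cut$. Such a leaf arises from the $i$-th premise of some $\interp_{\IL}$ rule in $\pi$, whose original premise $\pi_i$ derives the richer sequent $\psi_i, (\psi_i, \Phi_{[0,i)}, \phi) \interp \bot \Rightarrow \Phi_{[0,i)}, \phi$, which carries an additional diagonal formula $\psi_i \interp \bot$ on the left. This is exactly where the previously established admissibility of Löb's rule enters: applying it to $\pi_i$ with $\psi := \psi_i$ and $\Sigma := (\Phi_{[0,i)}, \phi)$ converts $\pi_i$ into a $\fgentzenIL + \cut$-proof of $\psi_i, (\Phi_{[0,i)}, \phi) \interp \bot \Rightarrow \Phi_{[0,i)}, \phi$, which is precisely the continuation we need.

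Extending this corecursive step to a total translation via the corecursion schema of Section~\ref{subsub:translations} then yields a $\gentzenIL + \cut$-preproof of $S$; it is in fact a proof, because any infinite branch must cross infinitely many local fragments (each being finite), and every such crossing passes through a progressing premise of some $\interp_{\IKT}$ instance, so the local-progress condition is met. The hard part is really the conceptual observation that the mismatch between $\interp_{\IL}$ and $\interp_{\IKT}$ is exactly the diagonal formula $\psi_i \interp \bot$, and that this mismatch is bridged precisely by Löb's rule; once this matching is identified, the rest of the argument is essentially bookkeeping, since all remaining rules agree between the two calculi.
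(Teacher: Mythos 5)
Your proposal is correct and uses the same key idea as the paper: the corecursive translation $\alpha$ that commutes with every rule except $\interp_{\IL}$, and that, at an $\interp_{\IL}$ instance, first applies the admissible Löb rule to each premise $\pi_i$ to strip the diagonal formula $\psi_i \interp \bot$ before replacing $\interp_{\IL}$ by $\interp_{\IKT}$ and recursing. Your framing in terms of explicitly constructing one local fragment at a time with continuation proofs at its non-axiomatic leaves is just the unfolding of the same corecursive definition through the method-of-translations machinery, and your productivity argument (finite local fragments thanks to wellfoundedness of the source proof, progress supplied at every $\interp_{\IKT}$ premise) matches the paper's.
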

\begin{proof}
  We define a function \(\alpha\) from proofs in \(\fgentzenIL + \cut\) to proofs in \(\gentzenIL + \cut\) that preserves the conclusion of the proof.
  The definition is done via corecursion and case analysis on the last rule of the input proof.
  \(\alpha\) will commute with all the rules except for \(\interpIL\), i.e., if \(R\) is a rule different from \(\interpIL\) we will have that
  \[
    \AxiomC{\(\pi_0\)}
    \noLine
    \UnaryInfC{\(S_0\)}
    \AxiomC{\(\cdots\)}
    \AxiomC{\(\pi_{n-1}\)}
    \noLine
    \UnaryInfC{\(S_{n-1}\)}
    \RightLabel{\(R\)}
    \TrinaryInfC{\(S\)}
    \DisplayProof
    {\qquad \overset{\alpha}{\longmapsto} \quad}
    \AxiomC{\(\alpha(\pi_0)\)}
    \noLine
    \UnaryInfC{\(S_0\)}
    \AxiomC{\(\cdots\)}
    \AxiomC{\(\alpha(\pi_{n-1})\)}
    \noLine
    \UnaryInfC{\(S_{n-1}\)}
    \RightLabel{\(R\)}
    \TrinaryInfC{\(S\)}
    \DisplayProof
  \]
  And in case the last rule of the input is \(\interpIL\) then
  \[
    \AxiomC{\(\begin{bmatrix}\pi_i\\
      \psi_i, (\psi_i, \Phi_{[0,i)}, \phi) \interp \bot \Rightarrow \Phi_{[0,i)}, \phi
		  \end{bmatrix}_{m...i...0}\)}
    \RightLabel{\(\interpIL\)}
    \UnaryInfC{\(\{\phi_i \interp \psi_i\}_{i < m}, \Gamma \Rightarrow \Delta, \psi_m \interp \phi\)}
    \DisplayProof
    {\quad \overset{\alpha}{\longmapsto} \quad}
    \AxiomC{\(\begin{bmatrix}\alpha(\textsf{l\"ob}(\pi_i))\\
      \psi_i, (\Phi_{[0,i)}, \phi) \interp \bot \Rightarrow \Phi_{[0,i)}, \phi
		  \end{bmatrix}_{m...i...0}\)}
    \RightLabel{\(\interpIKT\)}
    \UnaryInfC{\(\{\phi_i \interp \psi_i\}_{i < m}, \Gamma \Rightarrow \Delta, \psi_m \interp \phi\)}
    \DisplayProof
  \]
  It is clear that if \(\pi\) is a proof in \(\fgentzenIL + \cut\) then \(\alpha(\pi)\) is a preproof in \(\gentzenIL + \cut\).
  We notice that it is a proof, since in each corecursive call either the height of the input tree is smaller (in case the last rule of the input proof is not \(\interpIL\)) or we introduce progress via the application of the rule \(\interpIKT\) (in case the last rule of the input proof is \(\interpIL\)).
\end{proof}

\subsection{From \(\gentzenIL\) to \(\fgentzenIL\)}
Using the subformula property we can also transform non-wellfounded proofs into wellfounded proofs.
For this step the absence of the \(\cut\) rule is fundamental, as otherwise we would lack the necessary subformula property.

\begin{theorem}\label{thm:gentzenIL-to-fgentzenIL}
  For any \(\Lambda\) finite set of formulas, 
  \( \gentzenIL \vdash \Gamma \Rightarrow \Delta\) implies \(\fgentzenIL \vdash \Lambda \interp \bot, \Gamma \Rightarrow \Delta\).
\end{theorem}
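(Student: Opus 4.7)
We construct the wellfounded $\fgentzenIL$-proof by simulating, rule by rule, the non-wellfounded proof $\pi$, with the multiset $\Lambda$ serving as an accumulator recording which diagonal formulas have already been witnessed along the current branch. The subformula property (Proposition~\ref{lem:sub}) is indispensable: it ensures that every formula occurring in $\pi$ belongs to $\sub(\Gamma \cup \Delta)$, so in particular the set $\Theta$ of candidate formulas $\psi$ that can appear as the antecedent of the principal $\interp$-formula of some $\interp_{\IKT}$-application in $\pi$ is a finite subset of $\sub(\Gamma \cup \Delta)$. This finiteness is what enables the recursive construction to terminate.

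The construction proceeds by lexicographic induction on the pair $(|\Theta \setminus \Lambda|,\ \text{local height of the main local fragment of } \pi)$, with a case analysis on the last rule of $\pi$. For the propositional rules $\mathrm{ax}, \bot\mathrm{L}, \bot\mathrm{R}, {\to}\mathrm{L}, {\to}\mathrm{R}$ we apply the same rule in $\fgentzenIL$ and invoke the IH on each premise of the source with the same $\Lambda$; since the source's premise sits in the same local fragment with strictly smaller local height, the secondary measure decreases and the IH applies. The interesting case is $\interp_{\IKT}$ with principal $\psi_m \interp \phi$ and ordering $[\phi_i \interp \psi_i]_{i<m}$.

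If $\psi_m \in \Lambda$, we close the branch without recursion: $\psi_m \interp \bot$ is already available in the antecedent via $\Lambda \interp \bot$, so applying $\interp_{\IL}$ with $\psi_m \interp \bot$ alone as processed formula (and the remaining antecedent/succedent as weakening context) yields two premises, $\bot,(\bot,\phi)\interp\bot \Rightarrow \phi$ and $\psi_m,(\psi_m,\psi_m,\phi)\interp\bot \Rightarrow \psi_m,\phi$, closed respectively by $\bot\mathrm{L}$ and by Lemma~\ref{lm:axiom-sequent}. If $\psi_m \notin \Lambda$, we apply $\interp_{\IL}$ at the target treating the entire $\Lambda \interp \bot$ as additional processed formulas placed first in the ordering, followed by the original $\phi_i \interp \psi_i$. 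The premises arising from the $\Lambda \interp \bot$ entries are immediately closed by $\bot\mathrm{L}$. The remaining $m+1$ premises have shape $\psi_i,(\psi_i,\Lambda,\Phi_{[0,i)},\phi)\interp\bot \Rightarrow \Lambda,\Phi_{[0,i)},\phi$ for $i \leq m$. If $\psi_i \in \Lambda$ (possible only for $i < m$), the premise is axiomatic by Lemma~\ref{lm:axiom-sequent} since $\psi_i$ occurs on both sides. Otherwise we weaken the corresponding $\interp_{\IKT}$-premise of $\pi$ by $\Lambda$ (Lemma~\ref{lm:weakening}) and invoke the IH with accumulator $\Lambda \cup \{\psi_i\}$; using admissibility of contraction (proved analogously to Lemma~\ref{lm:weakening}) the duplicated $\Lambda \interp \bot$ merges and we recover exactly the desired premise. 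Since $\psi_i \in \Theta \setminus \Lambda$ in this recursive branch, the primary measure strictly decreases.

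The main obstacle is that the modal rule $\interp_{\IL}$ discards its weakening context in the premises, so $\Lambda$ cannot simply be forwarded through the rule as ordinary context. The key idea is therefore to embed $\Lambda \interp \bot$ into the \emph{processed} formulas of $\interp_{\IL}$, which surfaces $\Lambda$ inside each premise's antecedent (as part of $(\Lambda,\Phi_{[0,i)},\phi)\interp\bot$) and succedent (as part of $\Lambda,\Phi_{[0,i)},\phi$). Termination then hinges on the finiteness of $\Theta$ together with the monotone growth of $\Lambda$ across recursive $\interp_{\IKT}$-steps: once $\Theta \subseteq \Lambda$ every further $\interp_{\IKT}$-application falls into the closing subcase and the recursion halts.
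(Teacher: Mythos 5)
Your construction mirrors the paper's: a lexicographic induction on an accumulator-sensitive measure, prepending \(\Lambda \interp \bot\) as extra processed formulas in the \(\interp_{\IL}\) instance, discharging the \(\bot\)-premises by \(\bot\mathrm{L}\), closing the \(\psi_i \in \Lambda\) premises axiomatically, and recursing with \(\Lambda \cup \{\psi_i\}\) otherwise. The proof-building part is sound, but the termination argument has a genuine gap. You take \(\Theta\) to be the set of antecedents of \emph{principal} \(\interp\)-formulas of \(\interp_{\IKT}\)-applications in \(\pi\), and then assert \(\psi_i \in \Theta\) to make the primary measure drop. But for \(i < m\), \(\psi_i\) is the \emph{succedent} of the non-principal processed formula \(\phi_i \interp \psi_i\); nothing forces it to occur as a principal antecedent anywhere in \(\pi\), so \(|\Theta \setminus (\Lambda \cup \{\psi_i\})|\) may equal \(|\Theta \setminus \Lambda|\), and the IH is not available. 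The paper's choice, \(|\sub(\Gamma \Rightarrow \Delta) \setminus \Lambda|\) with the extended \(\sub\)-operator, is exactly what closes this hole: \(\psi_i\) occurs on the left of the premise sequent \(S_i\), and by the definition of \(\sub\) for \(\interp\)-formulas one has \(\psi_i \in \sub(S_i) \subseteq \sub(S)\), so with \(\psi_i \notin \Lambda\) the measure strictly decreases. Replacing your \(\Theta\) by \(\sub\) of the current sequent repairs the induction.

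Two smaller deviations are worth flagging. First, your separate \(\psi_m \in \Lambda\) case is unnecessary: once \(\Lambda \interp \bot\) is prepended in full, the top premise \(\psi_m, (\psi_m, \Lambda, \Phi_{[0,m)}, \phi) \interp \bot \Rightarrow \Lambda, \Phi_{[0,m)}, \phi\) is already axiomatic whenever \(\psi_m \in \Lambda\), which is exactly how the paper handles all \(\psi_i \in \Lambda\) uniformly. Second, the paper applies the IH to the \emph{unweakened} \(\pi_i\) with accumulator \(\Lambda \cup \{\psi_i\}\), obtaining \(\fgentzenIL \vdash \psi_i, (\psi_i, \Lambda, \Phi_{[0,i)}, \phi) \interp \bot \Rightarrow \Phi_{[0,i)}, \phi\), and only then weakens the succedent by \(\Lambda\); this produces \(\tau_i\) directly with no contraction. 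Your order (weaken the \(\gentzenIL\)-premise, apply IH, then contract) both invokes contraction admissibility in \(\fgentzenIL\) — which the paper never establishes, Lemma~\ref{lm:contraction} being for \(\gentzenIL\) — and, if one uses the \(\sub\)-based measure, enlarges \(\sub\) of the premise's conclusion by \(\sub(\Lambda)\), threatening the very termination argument you are trying to mend.
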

\begin{proof}
  Let \(\pi \vdash \Gamma \Rightarrow \Delta\) in \(\gentzenIL\).
  By induction on the lexicographical order \(\left(|\sub(\Gamma \Rightarrow \Delta) \setminus \Lambda |, \lheight(\pi)\right)\) and the case analysis in the last rule of \(\pi\).\footnote{Note that in the presence of \(\cut\) this measure would not work.}
%  If the last rule of \(\pi\) is \(\mathrm{ax}\) or \(\bot\mathrm{L}\) then the proof is trivial.
%  If the last rule of \(\pi\) is \(\to\mathrm{L}\) or \(\to\mathrm{R}\) then the result is a simple application of the induction hypothesis.
  The only interesting case is when the
  last rule of \(\pi\) is \(\interp_{\mathrm{IK4}}\).
  So \(\pi\) is of shape
  \[
    \AxiomC{\(\begin{bmatrix}\pi_i\\
        \psi_i, (\Phi_{[0,i)}, \phi) \interp \bot \Rightarrow \Phi_{[0,i)}, \phi
    		\end{bmatrix}_{m...i...0}\)}
    \RightLabel{\(\interp_{\mathrm{IK4}}\)}
    \UnaryInfC{\(\{\phi_i \interp \psi_i\}_{i < m}, \Gamma \Rightarrow \psi_m \interp \phi, \Delta\)}
    \DisplayProof
  \]
  and let us denote the conclusion of \(\pi_i\) as \(S_i\) and the conclusion of \(\pi\) as \(S\).
  We want to show that \( \fgentzenIL \vdash \Lambda \interp \bot, \{\phi_i \interp \psi_i\}_{i < m}, \Gamma \Rightarrow \psi_m \interp \phi, \Delta \).
  For \(i \leq m\) we define proofs \(\tau_i \vdash \psi_i, (\psi_i, \Phi_{[0,i)}, \Lambda, \phi) \interp \bot \Rightarrow \Phi_{[0,i)}, \Lambda, \phi\)
  so the desired proof is
  \[
    \AxiomC{\(\tau_m \quad \cdots \quad \tau_0 \quad \rho_{n-1} \quad \cdots \quad \rho_0\)}
    \RightLabel{\(\interp_{\mathrm{IL}}\)}
    \UnaryInfC{\(\Lambda \interp \bot, \{\phi_i \interp \psi_i\}_{i < m}, \Gamma \Rightarrow \psi_m \interp \phi, \Delta\)}
    \DisplayProof
  \]
  where \(\Lambda = \{\chi_0,\ldots,\chi_{n-1}\}\) and \(\interp_{\IL}\) was applied with the ordering
  \[
    \chi_0 \interp \bot, \ldots, \chi_{n-1} \interp \bot, \phi_0 \interp \psi_0, \ldots, \phi_{m-1} \interp \psi_{m-1}
  \] 
  and principal formula \(\psi_m \interp \phi\).
  Let us define the \(\tau_i\)'s and \(\rho_j\)'s.

  First, we define the \(\tau_i\)'s by cases.
  Case 1.
  If \(\psi_i \in \Lambda\) then we define \(\tau_i\) as
      \[
        \AxiomC{}
        \RightLabel{\(\mathrm{Ax}\)}
        \UnaryInfC{\(\psi_i, (\psi_i, \Phi_{[0,i)}, \Lambda, \phi) \interp \bot \Rightarrow \Phi_{[0,i)}, \Lambda, \phi\)}
        \DisplayProof
      \]
      since the formula \(\psi_i\) appears on both sides of this sequent.

  Case 2.
	If \(\psi_i \not \in \Lambda\) then, since \(\psi_i \in \sub(S_i)\) and \(\sub(S_i) \subseteq \sub(S)\), we have
      \( |\sub(S_i) \setminus (\Lambda \cup \{\psi_i\})| < |\sub(S_i) \setminus \Lambda| \leq |\sub(S) \setminus \Lambda| \).
      So by induction hypothesis applied to \(\pi_i\) with set \(\Lambda \cup \{\psi_i\}\) we obtain a proof \(\pi'_i\) in \(\fgentzenIL\) such that \( \pi'_i \vdash \psi_i, (\psi_i, \Phi_{[0,i)}, \Lambda, \phi) \interp \bot\Rightarrow \Phi_{[0,i)}, \phi \).
      We define \(\tau_i\) applying Wk to \(\pi'_i\) so \(\tau_i \vdash \psi_i, (\psi_i,\Phi_{[0,i)}, \Lambda, \phi) \interp \bot \Rightarrow \Phi_{[0,i)}, \Lambda, \phi\) in \(\fgentzenIL\).
  Finally, we define \(\rho_j\) for \(j < n\) as the following proof in \(\fgentzenIL\)
  \[
    \AxiomC{}
    \RightLabel{\(\bot \text{L.}\)}
    \UnaryInfC{\(\bot, (\bot, \Lambda_{[0,j)},\phi) \interp \bot \Rightarrow \Lambda_{[0,j)}, \phi\)}
    \DisplayProof
  \]
\end{proof}

As a trivial corollary setting \(\Lambda = \varnothing\) we get

\begin{corollary}\label{cr:from-gentzen-to-fgentzen}
  If \(\gentzenIL \vdash S\) then \(\fgentzenIL \vdash S\).
\end{corollary}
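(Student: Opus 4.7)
The plan is immediate: instantiate Theorem~\ref{thm:gentzenIL-to-fgentzenIL} with the finite set \(\Lambda = \varnothing\). By the convention that \(\Lambda \interp \bot\) denotes the multiset \(\{\phi \interp \bot \mid \phi \in \Lambda\}\), taking \(\Lambda = \varnothing\) makes \(\Lambda \interp \bot\) the empty multiset, so the conclusion \(\fgentzenIL \vdash \Lambda \interp \bot, \Gamma \Rightarrow \Delta\) collapses to exactly \(\fgentzenIL \vdash \Gamma \Rightarrow \Delta\), which is the statement of the corollary.

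There is no nontrivial step to carry out and no real obstacle: all the inductive work — in particular, the lexicographic induction on \(\bigl(|\sub(\Gamma \Rightarrow \Delta) \setminus \Lambda|, \lheight(\pi)\bigr)\) and the careful reorganization of the \(\interp_{\IKT}\) case into a single \(\interp_{\IL}\) inference with the enlarged ordering \(\chi_0 \interp \bot, \ldots, \chi_{n-1} \interp \bot, \phi_0 \interp \psi_0, \ldots, \phi_{m-1} \interp \psi_{m-1}\) — has already been absorbed into the proof of Theorem~\ref{thm:gentzenIL-to-fgentzenIL}. The role of the parameter \(\Lambda\) in that theorem is precisely to supply the induction with enough slack in the \(\interp_{\IKT}\) case to accommodate the diagonal formulas \(\psi_i \interp \bot\) that appear on the left of the premises of \(\interp_{\IL}\) but not of \(\interp_{\IKT}\); once the induction has been set up in that generality, the \(\Lambda = \varnothing\) instance is automatic and is exactly the translation we ultimately want. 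Accordingly, the proof of the corollary consists of nothing more than recording this specialization.
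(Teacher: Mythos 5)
Your proposal is correct and coincides exactly with the paper's own argument: the corollary is obtained by instantiating Theorem~\ref{thm:gentzenIL-to-fgentzenIL} with \(\Lambda = \varnothing\), so that \(\Lambda \interp \bot\) is empty and the conclusion specializes to \(\fgentzenIL \vdash \Gamma \Rightarrow \Delta\). Your additional remarks about the role of \(\Lambda\) as inductive slack in the modal case accurately describe why the theorem was phrased in that generality, though they are not needed for this derivation.
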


\section{Cut elimination}
\label{sec:cut-elim}

Finally, to show the equivalence of all the system defined up to this stage, we need to show \(\cut\) elimination.
We will prove \(\cut\) elimination for \(\gentzenIL\), as the shape of the rule \(\interpIKT\) makes it easier to eliminate \(\cut\) than with the rule \(\interpIL\).
However, thanks to the transformations defined in the previous section \(\cut\) elimination for \(\fgentzenIL\) will just simply be a corollary.

To make the proof simpler, we will show eliminability of contraction.

\begin{lemma}\label{lm:contraction}
  Define the rule \(\ctr\) as
  \[
    \AxiomC{\(\Gamma, \Gamma', \Gamma' \Rightarrow \Delta, \Delta', \Delta'\)}
    \RightLabel{\(\ctr\)}
    \UnaryInfC{\(\Gamma, \Gamma' \Rightarrow \Delta, \Delta'\)}
    \DisplayProof
  \]
  Then \(\ctr\) is eliminable in \(\gentzenIL (+ \cut)\) preserving local \(\cut\)-freeness.
\end{lemma}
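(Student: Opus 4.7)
The plan is to establish local admissibility of \(\ctr\) preserving local \(\cut\)-freeness; then eliminability preserving local \(\cut\)-freeness follows by Lemma~\ref{lm:admissibility-and-eliminability} and item 3 in the list following it.

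We proceed by induction on the local height of the locally \(\ctr\)-free, locally \(\cut\)-free proof \(\pi\) of \(\Gamma, \Gamma', \Gamma' \Rightarrow \Delta, \Delta', \Delta'\), with case analysis on its last rule; it suffices to handle the contraction of a single formula at a time, as the general case then follows by iteration. Since \(\pi\) is locally \(\cut\)-free, the last rule of \(\pi\) is never \(\cut\). When the contracted formula is not principal in the last rule, we apply the induction hypothesis to the premises and reapply the rule. When the contracted formula is principal for a propositional rule (such as \({\to}\mathrm{L}\) on a copy of \(\phi \to \psi\) whose duplicate sits in the context), we use the appropriate invertibility from Lemma~\ref{lm:invertibility-of-implication} to expose the subformulas in both premises, apply the induction hypothesis at smaller local height to contract the resulting duplications, and reapply the rule.

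The main obstacle is the case where the last rule of \(\pi\) is \(\interpIKT\): because its premises are progressing, this provides the only nontrivial base case (local height \(0\)) besides axioms. If the contracted formula lies in the weakening parts \(\Gamma\) or \(\Delta\), or if one copy is the principal \(\psi_m \interp \phi\) and the other sits in \(\Delta\), we simply reapply \(\interpIKT\) to the same premises after adjusting the weakening part. The delicate subcase is the contraction of two indexed formulas \(\phi_j \interp \psi_j\) and \(\phi_k \interp \psi_k\) from \(\{\phi_i \interp \psi_i\}_{i < m}\) that happen to be syntactically equal (with \(j < k\)): we reapply \(\interpIKT\) with the \(k\)-th index dropped, but the new premises coincide with the old ones only after contracting one copy of \(\phi_k = \phi_j\) on both sides of each of the top \(m - k\) premises. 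These required contractions act on subproofs living in separate local fragments, which remain locally \(\ctr\)-free and locally \(\cut\)-free; they are therefore obtained by a corecursive application of the result itself, formally justified by the method of translations of Section~\ref{subsub:translations} (the corecursive step at a local fragment outputs the single \(\interpIKT\) inference above, with the contracted old premises supplied by further corecursive calls).
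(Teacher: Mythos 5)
Your overall strategy---local admissibility first, then eliminability via Lemma~\ref{lm:admissibility-and-eliminability} and item~3 in the list following it---is the right one, and your treatment of the propositional rules and the easy $\interpIKT$ subcases is fine. The gap is in the delicate $\interpIKT$ subcase where both contracted occurrences lie in $\{\phi_i \interp \psi_i\}_{i < m}$. You correctly observe that the new premises, after dropping the $k$-th index, differ from the old $\pi_i$ (for $i > k$) only by a contraction of $\phi_k = \phi_j$ on both sides, and that these subproofs live in separate local fragments. But you then claim the contracted premises are obtained by ``a corecursive application of the result itself, formally justified by the method of translations.'' That step is circular inside a proof of local admissibility: local admissibility is a finitary statement established by induction on local height, so it cannot appeal to the very eliminability it is meant to produce; the corecursive machinery of Subsection~\ref{subsub:translations} is exactly what Lemma~\ref{lm:admissibility-and-eliminability} uses \emph{afterwards} to pass from local admissibility to eliminability, not something to be invoked in the base case of the local argument.

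The repair is simpler than what you attempt and removes any need for corecursion here. Do not try to eliminate contraction from the subproofs; instead, stack two $\ctr$ inferences directly on top of each $\pi_i$ with $i > k$ (contracting $\phi_k \interp \bot$ on the left and $\phi_k$ on the right, using that $\phi_j = \phi_k$ already occurs in $\Phi_{[0,k)}$), and then apply $\interpIKT$ with the ordering that skips index $k$. Since every premise of $\interpIKT$ is progressing, these $\ctr$ applications sit strictly above the main local fragment of the resulting proof, which is therefore already locally $\ctr$-free; and no $\cut$ was introduced, so local $\cut$-freeness is preserved. That is exactly what local admissibility demands, as a purely finitary construction. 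The residual $\ctr$ occurrences, now safely outside the main fragment, are then eliminated automatically by the standard corecursion in the proof of Lemma~\ref{lm:admissibility-and-eliminability}.
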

\begin{proof}
  We are going to show that \(\ctr\) is locally admissible without introducing any new cuts, obtaining the preservativity condition.
  To make it simpler we will assume that we want to contract only one formula one time, the general case can be treated similarly.
  We proceed by induction on the local height of the proof and cases in the last rule applied.
  The only interesting case\footnote{The rest of the cases are managed as usual, applying inversion if necessary.} is when \(\pi\) is of shape
  \[
    \AxiomC{\(\begin{bmatrix}\pi_i\\
      \psi_i, (\Phi_{[0,i)}, \phi) \interp \bot \Rightarrow \Phi_{[0,i)}, \phi
		  \end{bmatrix}_{m...i...0}\)}
    \RightLabel{\(\interp_{\mathrm{IK4}}\)}
    \UnaryInfC{\(\{\phi_i \interp \psi_i\}_{i < m}, \Gamma \Rightarrow \Delta, \psi_m \interp \phi\)}
    \DisplayProof
  \]
  and both formulas we desire to contract occur in the conclusion at \(\{\phi_i \interp \psi_i\}_{i < m}\).
  So there are \(j < k < m\) such that \(\phi_j \interp \psi_j = \phi_k \interp \psi_k\) and we want to show that the sequent
  \(
    \{\phi_i \interp \psi_i\}_{i < m, i \neq k}, \Gamma \Rightarrow \Delta, \psi_m \interp \phi
  \)
  is provable.
  For each \(i > k\) define the proof \(\rho_i\) in \(\gentzenIL (+\cut) + \ctr\) as
  \footnote{Remember that we will use dashed lines to reexpress sequents. This is just a notation and does not affect the structure of the proof.}
  \[
    \AxiomC{\(\pi_i\)}
    \noLine
    \UnaryInfC{\(\psi_i, (\Phi_{[0,i)}, \phi) \interp \bot \Rightarrow \Phi_{[0,i)}, \phi\)}
    \dashedLine
    \UnaryInfC{\(\psi_i, (\Phi_{[k+1,i)}, \phi_k, \Phi_{[0,k)}, \phi) \interp \bot \Rightarrow \Phi_{[k+1,i)}, \phi_k, \Phi_{[0,k)}, \phi\)}
    \RightLabel{\(\ctr\)}
    \UnaryInfC{\(\psi_i, (\Phi_{[k+1,i)}, \Phi_{[0,k)}, \phi) \interp \bot \Rightarrow \Phi_{[k+1,i)}, \phi_k, \Phi_{[0,k)}, \phi\)}
    \RightLabel{\(\ctr\)}
    \UnaryInfC{\(\psi_i, (\Phi_{[k+1,i)},\Phi_{[0,k)}, \phi) \interp \bot \Rightarrow \Phi_{[k+1,i)}, \Phi_{[0,k)}, \phi\)}
    \DisplayProof
  \]
  where in order to apply \(\ctr\) we used that \(\phi_k = \phi_j \in \Phi_{[0,k)}\).
  Then, the desired proof (which is trivially locally \(\ctr\)-free), is
  \[
    \AxiomC{\(\rho_m \quad \cdots \quad \rho_{k+1} \quad \pi_{k-1} \quad \cdots \quad \pi_0\)}
    \RightLabel{\(\interp_{\mathrm{IK4}}\)}
    \UnaryInfC{\(\{\phi_i \interp \psi_i\}_{i < m, i \neq k}, \Gamma \Rightarrow \psi_m \interp \phi, \Delta\)}
    \DisplayProof
  \]
  where \(\interp_{\mathrm{IK4}}\) has been applied with ordering \(\phi_0 \interp \psi_0, \ldots, \phi_{k-1} \interp \psi_{k-1}, \phi_{k+1} \interp \psi_{k+1}, \ldots, \phi_{m-1} \interp \psi_{m-1}\) and principal formula \(\psi_m \interp \phi\).
\end{proof}

\begin{theorem}[Local \(\cut\)-admissibility]\label{th:local-cut-admissibility}
  Assume we have proofs \(\pi \vdash \Gamma \Rightarrow \Delta, \chi\) and \(\tau \vdash \chi, \Gamma \Rightarrow \Delta\) in \(\gentzenIL + \cut\)  which are locally \(\cut\)-free.
  Then there is \(\rho \vdash \Gamma \Rightarrow \Delta\) in \(\gentzenIL + \cut\) which is locally \(\cut\)-free.
\end{theorem}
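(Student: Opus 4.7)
We prove the theorem by a double induction: primary on \(\size{\chi}\) and secondary on \(\lheight(\pi) + \lheight(\tau)\), by case analysis on the last rules of \(\pi\) and \(\tau\). The invariant maintained is that any newly introduced cut is either on a formula strictly smaller than \(\chi\) (handled by the primary IH) or at strictly smaller total local height (handled by the secondary IH), and any contractions arising from merging multisets are eliminated using Lemma~\ref{lm:contraction}, which preserves local \(\cut\)-freeness.

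The axiomatic cases (\(\mathrm{ax}\), \(\bot\mathrm{L}\)) with \(\chi\) principal on the relevant side close immediately, using admissibility of weakening from Lemma~\ref{lm:weakening} when needed. When \(\chi\) is non-principal in the last rule of \(\pi\) (symmetrically for \(\tau\)) and that rule is propositional (\({\to}\mathrm{L}\), \({\to}\mathrm{R}\), \(\bot\mathrm{R}\)), we push the cut into its premises, whose local heights are strictly smaller, and invoke the secondary IH. If the non-principal last rule is \(\interpIKT\), then \(\chi\) must lie in the right-hand weakening part \(\Delta_\pi\); since the premises of \(\interpIKT\) never mention the weakening multisets, we simply re-apply the rule with the same premises, dropping \(\chi\) from the right, which yields a derivation of \(\Gamma \Rightarrow \Delta\) without any cut at all.

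The principal case has both last rules equal to \(\interpIKT\) with \(\chi = \psi \interp \phi\) principal in both: namely, it is \(\pi\)'s principal right \(\psi_m \interp \phi\) (so \(\psi_m = \psi\) and the \(\phi\) of the rule matches ours), and it appears among \(\tau\)'s active left formulas as \(\phi_{j_0} \interp \psi_{j_0}\) for some \(j_0\), so \(\phi_{j_0} = \psi\) and \(\psi_{j_0} = \phi\). We build a merged \(\interpIKT\) derivation of \(\Gamma \Rightarrow \Delta\) whose principal right is inherited from \(\tau\) and whose active left formulas combine those of \(\pi\) with those of \(\tau\) (excluding \(\chi\)), in a suitable order. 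Each premise of the merged rule is assembled by combining a premise of \(\pi\) (which supplies \(\psi\) on the left and \(\phi\) on the right) with a suitable premise of \(\tau\) (which dually supplies \(\phi\) on the left or \(\psi\) on the right) via cuts on the strictly smaller formulas \(\psi\) and \(\phi\), applying the primary IH to ensure these intermediate cuts are eliminable. Admissible weakening (Lemma~\ref{lm:weakening}) and contraction (Lemma~\ref{lm:contraction}) then align the resulting sequents with the exact shape demanded by \(\interpIKT\).

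The main obstacle is precisely this principal case: the \(\interpIKT\) rule's premises have a rigid shape, listing precisely the \(\interp\bot\)-versions of the antecedents of the active formulas together with the succedent of the principal right on the left, with no slack for extraneous formulas. The merging must therefore choose the ordering of the merged active antecedents carefully so that each required premise can be assembled from premises of \(\pi\) and \(\tau\) using only cuts on \(\psi\) and \(\phi\)—never on intermediate \(\interp\)-formulas, which could be as large as \(\chi\) and so unavailable to the primary IH. Verifying this alignment premise by premise, while keeping the whole construction locally \(\cut\)-free, is the most delicate step of the argument.
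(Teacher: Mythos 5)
The overall structure you propose (lexicographic induction on $(\size\chi, \lheight\pi + \lheight\tau)$, case analysis on last rules, weakening/contraction admissibility, the observation that a cut formula sitting in the weakening part of $\interpIKT$ can just be dropped) matches the paper, and those cases are fine. The principal modal case, however, has a genuine gap.

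You insist that the merged $\interpIKT$ premises be assembled ``using only cuts on $\psi$ and $\phi$ --- never on intermediate $\interp$-formulas, \ldots so unavailable to the primary IH.'' That constraint cannot be met, and moreover it is unnecessary. Each premise of $\interpIKT$ has the rigid form $\psi_i, (\Phi_{[0,i)},\phi)\interp\bot \Rightarrow \Phi_{[0,i)},\phi$, so when you splice a premise of $\pi$ into a premise of $\tau$ to replace an occurrence of $\psi$ by the multiset $\Phi_{[0,m)}$ (and dually), you inevitably have to discharge the extra formulas $\psi\interp\bot$ and $\phi\interp\bot$ that get dragged in on the left. The paper does this by first applying the admissible rule $\mathrm{Nec}$ (Lemma~\ref{lm:nec-rule}) to $\pi_m$ and $\tau_k$ to manufacture proofs of $\cdots\Rightarrow\psi\interp\bot$ and $\cdots\Rightarrow\phi\interp\bot$, and then cutting on those formulas. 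But $\size{\psi\interp\bot}$ and $\size{\phi\interp\bot}$ need not be smaller than $\size{\chi}=\size{\psi\interp\phi}$ (take $\phi=\bot$, or take $\psi,\phi$ both atomic), so these cuts are \emph{not} covered by your primary IH, and your plan of ``applying the primary IH to ensure these intermediate cuts are eliminable'' breaks down.

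The observation you are missing is that those cuts do not need to be eliminated at all. They sit \emph{above} the new $\interpIKT$ instance, which is a progressing rule, hence outside the main local fragment of the proof being built. Since the theorem only asks for a \emph{locally} cut-free proof, it is perfectly legal to leave arbitrary cuts in the subproofs feeding the modal rule, and the paper does exactly that, explicitly noting that ``while defining the $\rho_i$s and $\rho'_j$s we can use $\cut$ rule, as in the final proof it will occur outside the main local fragment.'' This is precisely the payoff of working with local cut-admissibility in a non-wellfounded calculus rather than full cut-admissibility, and it is the step your proposal does not make. Without it, the principal modal case has no correct termination argument.
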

\begin{proof} 
  By induction on the lexicographic order of the pairs \(\left(|\chi|, \lheight(\pi) + \lheight(\tau)\right)\), i.e., the size of the formula and the sum of the local heights of \(\pi\) and \(\tau\).

  Case 1: either \(\pi\) or \(\tau\) is axiomatic.
  Assume \(\tau\) is axiomatic, the case where \(\pi\) is axiomatic is analogous.
  In case \(\Gamma \Rightarrow \Delta\) is an axiomatic sequent the desired proof is trivial, so assume it is not.
  This means that the cut formula must play a fundamental role in the axiomatic character of \(\tau\), i.e., in case the rule is \(\bot\mathrm{L}\) the cut formula must be \(\bot\) and in case the rule is \(\mathrm{ax}\) the cut formula must be the repeated propositional variable.
  There are two subcases.

  Subcase \(\chi = \bot\). Then \(\pi\) and \(\tau\) are respectively of shape
  \[
    \AxiomC{\(\pi\)}
    \noLine
    \UnaryInfC{\(\Gamma \Rightarrow \Delta,\bot\)}
    \DisplayProof
    \qquad
    \AxiomC{}
    \RightLabel{\(\bot\mathrm{L}\)}
    \UnaryInfC{\(\bot, \Gamma \Rightarrow \Delta\)}
    \DisplayProof
  \]
  where both are locally \(\cut\)-free.
  Then, we can use invertibility of \(\bot\mathrm{R}\) preserving local \(\cut\)-freeness in \(\pi\) to obtain the desired proof.

  Subcase \(\chi = p\).
  Then \(\pi\) and \(\tau\) are respectively of shape
  \[
    \AxiomC{\(\pi\)}
    \noLine
    \UnaryInfC{\(\Gamma \Rightarrow \Delta,p\)}
    \DisplayProof
    \qquad
    \AxiomC{}
    \RightLabel{ax}
    \UnaryInfC{\(p, \Gamma \Rightarrow \Delta_0, p\)}
    \DisplayProof
  \]
  where \(\Delta = \Delta_0, p\) and \(\pi \vdash \Gamma \Rightarrow \Delta, p, p\) locally cut-free.
  Then using Lemma~\ref{lm:contraction} on \(\pi\) we can obtain the desired proof by contracting \(p\) preserving local cut-freeness.

  Case 2: the last rule of \(\pi\) or the last rule of \(\tau\) is \(\bot\mathrm{R}\).
  We will prove the case when the last rule of \(\pi\) is \(\tau\), the other case being analogous.
  First, let us assume that the principal formula of \(\pi\) is not the cut formula.
  So \(\pi\) and \(\tau\) will be of the following shape
  \[
    \AxiomC{\(\pi_0\)}
    \noLine
    \UnaryInfC{\(\Gamma \Rightarrow \Delta_0, \chi\)}
    \RightLabel{\(\bot\mathrm{R}\)}
    \UnaryInfC{\(\Gamma \Rightarrow \Delta_0, \bot, \chi\)}
    \DisplayProof
    \qquad
    \AxiomC{\(\tau\)}
    \noLine
    \UnaryInfC{\(\chi, \Gamma \Rightarrow \Delta_0, \bot\)}
    \DisplayProof
  \]
  Then apply inversion of \(\bot\mathrm{R}\) on \(\tau\) obtaining a \(\tau' \vdash \chi, \Gamma \Rightarrow \Delta_0\) which is locally cut-free and whose local height has not increased.
  We obtain the desired proof by applying the I.H., with the same cut formila and smaller sum of local heights, on \(\pi_0\) and \(\tau'\).

  Now, assume the principal formula of \(\pi\) is the cut formula.
  So \(\pi\) and \(\tau\) will be of the following shape
  \[
    \AxiomC{\(\pi_0\)}
    \noLine
    \UnaryInfC{\(\Gamma \Rightarrow \Delta\)}
    \RightLabel{\(\bot\mathrm{R}\)}
    \UnaryInfC{\(\Gamma \Rightarrow \Delta, \bot\)}
    \DisplayProof
    \qquad
    \AxiomC{\(\tau\)}
    \noLine
    \UnaryInfC{\(\bot, \Gamma \Rightarrow \Delta\)}
    \DisplayProof
  \]
  Then, the desired proof is \(\pi_0\).

  Case 3: principal cut reduction (in \(\pi\) and \(\tau\) the cut formula is principal).
  The only formula that can be principal on the left side and on the right side of sequents are implications.
  Then \(\pi\) and \(\tau\) are of shape
  \[
    \AxiomC{\(\pi_0\)}
    \noLine
    \UnaryInfC{\(\chi_0, \Gamma \Rightarrow \Delta, \chi_1\)}
    \RightLabel{\({\to}\mathrm{R}\)}
    \UnaryInfC{\(\Gamma \Rightarrow \Delta, \chi_0 \to \chi_1\)}
    \DisplayProof
    \qquad
    \AxiomC{\(\tau_0\)}
    \noLine
    \UnaryInfC{\(\Gamma \Rightarrow \Delta, \chi_0\)}
    \AxiomC{\(\tau_1\)}
    \noLine
    \UnaryInfC{\(\chi_1, \Gamma \Rightarrow \Delta\)}
    \RightLabel{\({\to}\mathrm{L}\)}
    \BinaryInfC{\(\chi_0 \to \chi_1, \Gamma \Rightarrow \Delta\)}
    \DisplayProof
  \]
  We can apply the admissibility of weakening to obtain a proof \(\tau'_0 \vdash \Gamma \Rightarrow \Delta, \chi_1, \chi_0\) which is locally cut-free.
  Since \(|\chi_i| < |\chi_0 \to \chi_1|\) for \(i \in \{0,1\}\) we have that we can apply the induction hypothesis on \(\tau'_0, \pi_0\) with cut formula \(\chi_0\) obtaining a \(\rho_0 \vdash \Gamma \Rightarrow \Delta, \chi_1\) which is locally cut-free.
  Then we can apply the induction hypothesis on \(\rho_0, \tau_1\) with cut formula \(\chi_1\) obtaining a \(\rho_1 \vdash \Gamma \Rightarrow \Delta\) which is locally cut-free, as desired.

  Case 4: the cut formula is not principal in either \(\pi\) or \(\tau\) and the principal formula is an implication.
  We will assume that in \(\pi\) the cut formula is not principal, the case for \(\tau\) is analogous.
  There are two subcases, depending on the last rule applied to \(\pi\).

  Subcase \({\to}\mathrm{R}\).
  Then \(\pi\) and \(\tau\) are of shape
  \[
    \AxiomC{\(\pi_0\)}
    \noLine
    \UnaryInfC{\(\phi, \Gamma \Rightarrow \Delta, \psi, \chi\)}
    \RightLabel{\({\to}\mathrm{R}\)}
    \UnaryInfC{\(\Gamma \Rightarrow \Delta, \phi \to \psi, \chi\)}
    \DisplayProof
    \qquad
    \AxiomC{\(\tau\)}
    \noLine
    \UnaryInfC{\(\chi, \Gamma \Rightarrow \Delta, \phi \to \psi\)}
    \DisplayProof
  \]
  Applying Lemma~\ref{lm:invertibility-of-implication} to \(\tau\) we can obtain a proof \(\tau_0 \vdash \chi, \phi, \Gamma \Rightarrow \Delta, \psi\) which is also locally \(\cut\)-free and \(\lheight(\tau_0) \leq \lheight(\tau)\).
  Since \(\lheight(\pi_0) < \lheight(\pi)\) we can apply the induction hypothesis to \(\pi_0\) and \(\tau\) with cut formula \(\chi\), obtaining a locally \(\cut\)-free proof \(\rho \vdash \phi, \Gamma \Rightarrow \Delta, \psi\).
  The desired proof is
  \[
    \AxiomC{\(\rho\)}
    \noLine
    \UnaryInfC{\(\phi, \Gamma \Rightarrow \Delta, \psi\)}
    \RightLabel{\({\to}\mathrm{R}\)}
    \UnaryInfC{\(\Gamma \Rightarrow \Delta, \phi \to \psi\)}
    \DisplayProof
  \]

  Subcase \({\to}\mathrm{L}\).
  Then \(\pi\) and \(\tau\) are of shape
  \[
    \AxiomC{\(\pi_0\)}
    \noLine
    \UnaryInfC{\(\Gamma \Rightarrow \Delta, \phi, \chi\)}
    \AxiomC{\(\pi_1\)}
    \noLine
    \UnaryInfC{\(\psi, \Gamma \Rightarrow \Delta, \chi\)}
    \RightLabel{\({\to}\mathrm{L}\)}
    \BinaryInfC{\(\phi \to \psi, \Gamma \Rightarrow \Delta, \chi\)}
    \DisplayProof
    \qquad
    \AxiomC{\(\tau\)}
    \noLine
    \UnaryInfC{\(\chi, \phi \to \psi, \Gamma \Rightarrow \Delta\)}
    \DisplayProof
  \]
  Applying Lemma~\ref{lm:invertibility-of-implication} to \(\tau\) we can obtain a proofs \(\tau_0 \vdash \chi, \Gamma \Rightarrow \Delta, \phi\) and \(\tau_1 \vdash \chi, \psi, \Gamma \Rightarrow \Delta\) which is also locally \(\cut\)-free and \(\lheight(\tau_i) \leq \lheight(\tau)\) for \(i \in \{0,1\}\).
  Since \(\lheight(\pi_0) < \lheight(\pi)\) we can apply the induction hypothesis to \(\pi_i\) and \(\tau_i\) with cut formula \(\chi\), obtaining a locally \(\cut\)-free proofs \(\rho_0 \vdash \Gamma \Rightarrow \Delta, \phi\) and \(\rho_1 \vdash \psi, \Gamma \Rightarrow \Delta\).
  The desired proof is
  \[
    \AxiomC{\(\rho_0\)}
    \noLine
    \UnaryInfC{\(\Gamma \Rightarrow \Delta, \phi\)}
    \AxiomC{\(\rho_1\)}
    \noLine
    \UnaryInfC{\(\psi, \Gamma \Rightarrow \Delta\)}
    \RightLabel{\({\to}\mathrm{L}\)}
    \BinaryInfC{\(\phi \to \psi, \Gamma \Rightarrow \Delta\)}
    \DisplayProof
  \]

  Case 5: the cut formula is not principal in either \(\pi\) or \(\tau\) and the principal formula is an \(\interp\)-formula.
  If the cut formula is not principal in \(\pi\) and the principal formula an \(\interp\)-formula, then the cut formula belongs to the weakening part of \(\pi\).
  So \(\pi\) is of shape
  \[
    \AxiomC{\(\begin{bmatrix}\pi_i\\
        \psi_i, (\Phi_{[0,i)}, \phi) \interp \bot \Rightarrow \Phi_{[0,i)}, \phi
    		\end{bmatrix}_{m...i...0}\)}
    \RightLabel{\(\interp_{\mathrm{IK4}}\)}
    \UnaryInfC{\(\{\phi_i \interp \psi_i\}_{i < m}, \Gamma \Rightarrow \Delta,  \chi,\psi_m \interp \phi\)}
    \DisplayProof
  \]
  In this case we just modify the weakening part of \(\pi\) to eliminate the cut formula, i.e., the desired proof is simply
  \[
    \AxiomC{\(\begin{bmatrix}\pi_i\\
        \psi_i, (\Phi_{[0,i)}, \phi) \interp \bot \Rightarrow \Phi_{[0,i)}, \phi
    		\end{bmatrix}_{m...i...0}\)}
    \RightLabel{\(\interp_{\mathrm{IK4}}\)}
    \UnaryInfC{\(\{\phi_i \interp \psi_i\}_{i < m}, \Gamma \Rightarrow \Delta, \psi_m \interp \phi\)}
    \DisplayProof
  \]

  So assume the \(\cut\) formula is not principal in \(\tau\) and the principal formula is an \(\interp\)-formula.
  If the cut formula belongs to the weakening of \(\tau\) to obtain the desired proof we just need to modify the weakening part of \(\tau\).
  So We can assume that the \(\cut\) formula does not occur in the weakening part of \(\tau\).
  This implies that \(\chi = \chi_0 \interp \chi_1\) for some formulas \(\chi_0\)
  and \(\chi_1\).

  Then, if the last rule of \(\pi\) is \(\to \text{L}\) or \(\to \text{R}\) we would be in Case 4, and it cannot be \(\text{ax}\), \(\bot \text{L}\) since we would be in Case 1 or \(\bot\mathrm{R}\) since we would be in Case 2.
  The only possibility left is that the last rule of \(\pi\) is \(\interp_{\mathrm{IK4}}\), and we can assume that the cut formula is principal in \(\pi\) (otherwise it would belong to the weakening part and again we would just eliminate it from the weakening part of the rule instance).
  
  So both proofs end in an application of \(\interp_{\textsf{IK4}}\), the cut formula is principal in \(\pi\) and occurs in the ordering used in \(\tau\).
  Then \(\pi\) and \(\tau\) are of the following shape:
  \[
    \AxiomC{\(\begin{bmatrix}\pi_i\\
        \psi_i, (\Phi_{[0,i)}, \phi) \interp \bot \Rightarrow \Phi_{[0,i)}, \phi
    		\end{bmatrix}_{m...i...0}\)}
    \RightLabel{\(\interp_{\mathrm{IK4}}\)}
    \UnaryInfC{\(\{\phi_i \interp \psi_i\}_{i < m}, \Gamma_0 \Rightarrow \Delta_0, \psi_m \interp \phi\)}
    \DisplayProof
  \]
  \[
    \AxiomC{\(\begin{bmatrix}\tau_i\\
        \psi'_j, (\Phi'_{[0,j)}, \phi') \interp \bot \Rightarrow \Phi'_{[0,j)}, \phi'
    		\end{bmatrix}_{n...j...0}\)}
    \RightLabel{\(\interp_{\mathrm{IK4}}\),}
    \UnaryInfC{\(\{\phi'_j \interp \psi'_j\}_{j < n}, \Gamma_1 \Rightarrow \Delta_1, \psi'_n \interp \phi'\)}
    \DisplayProof
  \]
  where \(\chi = \chi_0 \interp \chi_1 = \psi_m \interp \phi = \phi'_k \interp \psi'_k \) for some \(k < n\)
  and  
  \[
    \tag{i}
    \big(\{\phi_i \interp \psi_i\}_{i < m}, \Gamma_0 \Rightarrow \Delta_0 \big)
    = \big(\{\phi'_j \interp \psi'_j\}_{j < n, j \neq k}, \Gamma_1 \Rightarrow \Delta_1, \psi'_n \interp \phi'\big).
  \]

  Let us write \(\Sigma = \{\phi_i \interp \psi_i\}_{i < m}\) and \(\Sigma' = \{\phi'_j \interp \psi'_j\}_{j < n, j \neq k}\).
  Define \(\Gamma_2 := \Gamma_0 \setminus (\Sigma' \setminus \Sigma) = \Gamma_1 \setminus (\Sigma \setminus \Sigma')\), where the equality holds thanks to (i).
  Then
  \[
    \Gamma_2, \Sigma \cap \Sigma', \Sigma \setminus \Sigma', \Sigma' \setminus \Sigma = \{\phi_i \interp \psi_i\}_{i < m}, \Gamma_0 = \{\phi'_j \interp \psi'_j\}_{j < n, j \neq k}, \Gamma_1.
  \]
  We also notice that contracting \(\Gamma_2, \Sigma,\Sigma' \Rightarrow \Delta_1, \psi'_n \interp \phi'_n\) we can obtain the desired sequent.
  Let us define proofs \((\rho_i)_{i < m}, (\rho'_j)_{j \leq n, j \neq k}\) such that
  \begin{align*}
    &\rho'_j \vdash \psi'_j, (\Phi'_{(k, j)}, \Phi_{[0,m)}, \Phi'_{[0,k)}, \phi') \interp \bot \Rightarrow \Phi'_{(k, j)}, \Phi_{[0,m)}, \Phi'_{[0,k)}, \phi', \text{ for }k < j \leq n,\\
    &\rho_i \vdash \psi_i, (\Phi_{[0,i)}, \Phi'_{[0,k)}, \phi') \interp \bot \Rightarrow \Phi_{[0,i)}, \Phi'_{[0,k)}, \phi', \text{ for }i < m,\\
    &\rho'_j \vdash \psi'_j, (\Phi'_{[0,j)}, \phi') \interp \bot \Rightarrow \Phi'_{[0,j)}, \phi', \text{ for }j < k.
  \end{align*}
  Then we get the following (locally \(\cut\)-free) proof \(\rho\)
  \[
    \AxiomC{\(\rho'_n \quad\cdots \quad\rho'_{k + 1} \quad \rho_{m-1} \quad\cdots \quad\rho_0 \quad \rho'_{k-1} \quad\cdots \quad\rho'_0\)}
    \RightLabel{\(\interp_{\mathrm{IK4}}\)}
    \UnaryInfC{\(\{\phi_i \interp \psi_i\}_{i < m}, \{\phi'_j \interp \psi'_j\}_{j < n, j \neq k}, \Gamma_2 \Rightarrow \Delta', \psi'_n \interp \phi'\)}
    \dashedLine
    \UnaryInfC{\(\Sigma, \Sigma', \Gamma_2 \Rightarrow \Delta_1, \psi'_n \interp \phi'_n\)}
    \DisplayProof
  \]
  where \(\interp_{\mathrm{IK4}}\) is applied with ordering
  \[
    \phi'_0 \interp \psi'_0, \ldots, \phi'_{k-1} \interp \psi'_{k-1},
    \phi_0 \interp \psi_0, \ldots, \phi_{m-1} \interp \psi_{m-1},
    \phi'_{k+1} \interp \psi'_{k+1}, \ldots, \phi'_{n-1} \interp \psi'_{n-1}
  \]
  and main formula \(\psi'_n \interp \phi'\).
  The desired proof will be obtained by applying contraction, i.e., Lemma~\ref{lm:contraction} to \(\rho\) as contraction preserves local \(\cut\)-freeness.
  Note that while defining the \(\rho_i\)s and \(\rho'_j\)s we can use \(\cut\) rule, as in the final proof it will occur outside the main local fragment.

  We define \(\rho'_j\) for \(j < k\) as \(\tau_j\), so we only need to define \(\rho'_j\) for \(k < j \leq n\) and \(\rho_i\) for \(i < m\).
  To define \(\rho'_j\) for \(k < j \leq n\) we notice we have the following proofs:
  \begin{align*}
    &\tau_j \vdash \psi'_j, (\Phi'_{(k,j)}, \chi_0 ,\Phi'_{[0,k)}, \phi') \interp \bot \Rightarrow \Phi'_{(k,j)}, \chi_0, \Phi'_{[0,k)}, \phi',\\
    &\pi_m \vdash \chi_0, (\Phi_{[0,m)}, \chi_1) \interp \bot \Rightarrow \Phi_{[0,m)}, \chi_1, \\
    &\tau_k \vdash \chi_1, (\Phi_{[0,k)}, \phi') \interp \bot \Rightarrow \Phi'_{[0,k)}, \phi'.
  \end{align*}
  Applying Lemma~\ref{lm:nec-rule} to \(\pi_m\) and to \(\tau_k\) we obtain proofs \(\pi'_m\) and \(\tau'_k\) such that
  \( \pi'_m \vdash (\Phi_{[0,m)}, \chi_1) \interp \bot \Rightarrow \chi_0 \interp \bot\), \(\tau'_k \vdash (\Phi'_{[0,k)}, \phi') \interp \bot \Rightarrow \chi_1 \interp \bot \).
  Then the desired proof \(\rho'_j\) is (where \(\mathrm{wk}\) indicates an application of admissibility of weakening)
  {\footnotesize
  \[
    \AxiomC{\(\wk(\tau'_k)\)}
    \AxiomC{\(\wk(\pi'_m)\)}
    \AxiomC{\(\wk(\tau_j)\)}
    \LeftLabel{\(\chi_0 \interp \bot\)}
    \RightLabel{\(\cut\)}
    \BinaryInfC{\(\psi'_j, (\Phi, \chi_1)\interp \bot \Rightarrow \Phi, \chi_0, \chi_1\)}
    \AxiomC{\(\wk(\pi_m)\)}
    \LeftLabel{\(\chi_0\)}
    \RightLabel{\(\cut\)}
    \BinaryInfC{\(\psi'_j, (\Phi, \chi_1)\interp \bot \Rightarrow \Phi, \chi_1\)}
    \LeftLabel{\(\chi_1 \interp \bot\)}
    \RightLabel{\(\cut\)}
    \BinaryInfC{\(\psi'_j, \Phi\interp \bot \Rightarrow \Phi, \chi_1\)}
    \AxiomC{\(\wk(\tau_k)\)}
    \LeftLabel{\(\chi_1\)}
    \RightLabel{\(\cut\)}
    \BinaryInfC{\(\psi'_j, \Phi\interp \bot \Rightarrow \Phi\)}
    \DisplayProof
  \]
}
  where we denoted \(\Phi'_{(k,j)},\Phi_{[0,m)}, \Phi'_{[0,k)},\phi'\) as \(\Phi\) and annotated the cut formula at the left of the rule application.

  All that is left is to define proofs \(\rho_i\) for \(i < m\).
  We remember that we have the following proofs:
  \begin{align*}
    &\pi_i \vdash \psi_i, (\Phi_{[0,i)}, \chi_1) \interp \bot \Rightarrow \Phi_{[0,i)}, \chi_1, \\
    &\tau_k \vdash \chi_1, (\Phi'_{[0,k)}, \phi') \interp \bot \Rightarrow \Phi'_{[0,k)}, \phi'.
  \end{align*}
  Applying Lemma~\ref{lm:nec-rule} we obtain \( \tau'_k \vdash (\Phi'_{[0,k)}, \phi') \interp \bot \Rightarrow \chi_1 \interp \bot \).
  Then the desired proof \(\rho_i\) is defined as
  \[
  	\AxiomC{\(\wk(\tau'_k)\)}
    \noLine
    \UnaryInfC{\(\psi_i, \Phi \interp \bot \Rightarrow \Phi, \chi_1, \chi_1 \interp \bot\)}
  	\AxiomC{\(\wk(\pi_i)\)}
    \noLine
    \UnaryInfC{\(\chi_1 \interp \bot, \psi_i, \Phi \interp \bot \Rightarrow \Phi, \chi_1\)}
  	\RightLabel{\(\cut\)}
    \BinaryInfC{\(\psi_i, \Phi \interp \bot \Rightarrow \Phi, \chi_1\)}
  	\AxiomC{\(\wk(\tau_k)\)}
    \noLine
    \UnaryInfC{\(\chi_1, \psi_i, \Phi \interp \bot \Rightarrow \Phi\)}
  	\RightLabel{\(\cut\)}
    \BinaryInfC{\(\psi_i, \Phi \interp \bot\Rightarrow \Phi\)}
    \DisplayProof
  \]
  where we denoted \(\phi_{[0,i)}, \phi'_{[0,k)}, \phi'\) as \(\Phi\).
\end{proof}

Thanks to the previous lemma we can conclude the two desired \(\cut\) elimination results.

\begin{corollary}[\(\gentzenIL\) \(\cut\) elim.]\label{tm:gentzenILcut-to-gentzenIL}
  We have the following:
  \begin{enumerate}
    \item \(\cut\) is eliminable in \(\gentzenIL\).
    \item \(\cut\) is eliminable in \(\fgentzenIL\).
  \end{enumerate}
\end{corollary}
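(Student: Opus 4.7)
The plan is to derive both statements as direct corollaries of results already established in the excerpt, with essentially no new work. The heavy lifting has been done in Theorem~\ref{th:local-cut-admissibility}, which establishes local $\cut$-admissibility for $\gentzenIL + \cut$ preserving local $\cut$-freeness. What remains is to package this fact appropriately for both calculi.

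For part 1, I would invoke Lemma~\ref{lm:admissibility-and-eliminability}, which states that for any local progress sequent calculus, local admissibility of a rule is equivalent to its local eliminability and to its eliminability. Theorem~\ref{th:local-cut-admissibility} gives us precisely that $\cut$ is locally admissible in $\gentzenIL + \cut$ (the hypothesis that the given proofs are locally $\cut$-free is exactly the hypothesis of local admissibility, and the conclusion yields a locally $\cut$-free proof). Hence $\cut$ is eliminable in $\gentzenIL + \cut$, which is to say: any sequent provable in $\gentzenIL + \cut$ is provable in $\gentzenIL$.

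For part 2, I would use the transformations from Section~\ref{sec:transformations} as a round trip through $\gentzenIL$. Suppose $\fgentzenIL + \cut \vdash S$. By Theorem~\ref{tm:fgentzenILcut-to-genteznILcut} we get $\gentzenIL + \cut \vdash S$. Applying part 1 yields $\gentzenIL \vdash S$. Finally, Corollary~\ref{cr:from-gentzen-to-fgentzen} gives $\fgentzenIL \vdash S$, as required.

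There is no real obstacle here; the entire point of the architecture set up in the preceding sections (the translations summarized in Figure~\ref{fig:plan} and the general framework of Section~\ref{sub:local-progress-calculi}) is to reduce the finitary cut elimination for $\fgentzenIL$ to the non-wellfounded cut elimination for $\gentzenIL$, where the simpler shape of $\interpIKT$ (lacking the diagonal formula) makes the principal reduction tractable. The only thing to be careful about when writing the proof cleanly is to cite the preservation of local $\cut$-freeness in Theorem~\ref{th:local-cut-admissibility}, as it is this quantitative strengthening of local admissibility (rather than bare admissibility) that is needed to invoke Lemma~\ref{lm:admissibility-and-eliminability} and obtain full eliminability.
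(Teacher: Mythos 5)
Your proposal matches the paper's proof exactly: part 1 follows from Theorem~\ref{th:local-cut-admissibility} (local $\cut$-admissibility) together with Lemma~\ref{lm:admissibility-and-eliminability}, and part 2 from the round trip $\fgentzenIL + \cut \to \gentzenIL + \cut \to \gentzenIL \to \fgentzenIL$ via Theorem~\ref{tm:fgentzenILcut-to-genteznILcut}, part 1, and Corollary~\ref{cr:from-gentzen-to-fgentzen}. One small clarification: preservation of local $\cut$-freeness is not a ``quantitative strengthening'' layered on top of local admissibility but is literally the definition of local admissibility of $\cut$ in the sense of Definition of Section~\ref{subsub:translations}, which is exactly why Theorem~\ref{th:local-cut-admissibility} plugs directly into Lemma~\ref{lm:admissibility-and-eliminability}.
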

\begin{proof}
  That \(\cut\) is eliminable in \(\gentzenIL\) follows straightforwardly from Lemma~\ref{lm:admissibility-and-eliminability} and Theorem~\ref{th:local-cut-admissibility}.
  Finally, that \(\cut\) is eliminable in \(\fgentzenIL\) follows from \(\cut\) eliminability in \(\gentzenIL\) together with the transformations of Theorem~\ref{tm:fgentzenILcut-to-genteznILcut} and Corollary~\ref{cr:from-gentzen-to-fgentzen}.
\end{proof}

\section{Regularizing proofs}
\label{sub:cyclic-systems}

Let \(\mathcal{G} = (\mathcal{R}, L)\) be a local-progress calculus.
We are going to define an alternative notion of proof called \emph{cyclic proof}.
A cyclic preproof in \(\mathcal{G}\) is a pair \(\pi =(\tau, w \mapsto w^\circ)\) such that
\begin{enumerate}
  \item \(\tau\) is a finite tree generated by the rules of \(\mathcal{R}\), where some leaves are sequents marked with a rule denoted \(\mathrm{Repeat}\).
    These leaves are called \emph{repeat nodes}.
  \item \(w \mapsto w^\circ\) is a function whose domain is the set of repeat nodes of \(\tau\) and additionally, it fulfills that \(w^\circ < w\) and \(S_{w^\circ} = S_w\), where \(S_{w^\circ}\) is the sequent at \(w^\circ\) and \(S\) the sequent at \(w\).
\end{enumerate}
A \emph{(cyclic) proof} is a preproof where for any repeat leaf \(w\) in the path from \(w^\circ\) to \(w\) there is progress, i.e., there is a node \(v\) with children \(v0,\ldots,v(n-1)\) such that \(w^\circ \leq v < vi < w\) and \(i \in L_{R}(S_{0},\ldots,S_{n-1}, S)\) where \(R\) is the rule at \(v\), \(S\) is the sequent at \(v\) and \(S_j\) is the sequent at \(vj\) for \(j < n\).

We will write \(\cgentzenIL\) to denote the local-progress proof system \(\gentzenIL\) with the notion of cyclic proof instead of non-wellfounded proof.
So we will write \(\cgentzenIL \vdash S\) to mean that there is a cyclic proof of \(S\) in \(\gentzenIL\).
In this section we will show that \(\gentzenIL \vdash S\) implies \(\cgentzenIL \vdash S\).
In order to do that we will introduce a local progress calculus in the middle of \(\gentzenIL\) and \(\cgentzenIL\) called \(\slimgentzenIL\).

\begin{definition}
  We define the rule \(\interp^{\mathrm{slim}}_{\mathrm{IK4}}\) as
  \[
    \AxiomC{\([\psi_i, (\Phi_{[0,i)}, \phi) \interp \bot \Rightarrow \Phi_{[0,i)}, \phi]_{m...i...0}\)}
    \RightLabel{\(\interp^{\mathrm{slim}}_{\mathrm{IK4}}\)}
    \UnaryInfC{\(\{\phi_i \interp \psi_i\}_{i < m}, \Gamma \Rightarrow \Delta, \psi_m \interp \phi\)}
    \DisplayProof
  \]
  where there are no repetitions in \(\{\phi_i\}_{i < m}\) (equivalently, \(\Phi_{[0,m)}\) is a set instead of a multiset).

  We define the sequent calculus \(\slimgentzenIL\) as the local-progress sequent calculus given by the rules of Figure~\ref{fig:rule} without rules \(\interp_{\mathrm{IL}}\), \(\interp_{\mathrm{IK4}}\) and \(\cut\) adding the rule \(\interp^{\mathrm{slim}}_{\mathrm{IK4}}\).
  Progress only occurs at the premises of \(\interp^{\mathrm{slim}}_{\mathrm{IK4}}\).
\end{definition}

\begin{theorem}\label{thm:gentzenIL-to-slimgentzenIL}
  If \(\gentzenIL \vdash S\) then \(\slimgentzenIL \vdash S\).
\end{theorem}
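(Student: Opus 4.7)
The plan is to apply the method of translations of Section~\ref{subsub:translations} and define a corecursive step that sends a proof \(\pi\) in \(\gentzenIL\) to a local fragment in \(\slimgentzenIL\) with the same root sequent, together with \(\gentzenIL\)-proofs of each progressing premise of that fragment. On the cases where the last rule of \(\pi\) is non-modal (that is, one of \(\mathrm{ax}, \bot\mathrm{L}, \bot\mathrm{R}, {\to}\mathrm{L}, {\to}\mathrm{R}\)) the step simply copies the rule and passes the subproofs of \(\pi\) on as new inputs. All the real content lies in the case where \(\pi\) ends in an \(\interp_{\IKT}\) application.

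Suppose that the last rule of \(\pi\) is an \(\interp_{\IKT}\) application with ordering \(\phi_0 \interp \psi_0, \ldots, \phi_{m-1} \interp \psi_{m-1}\) and principal formula \(\psi_m \interp \phi\). Define a slim sub-ordering by selecting the indices \(j_0 < j_1 < \cdots < j_{m'-1}\) corresponding to the first occurrences of the distinct values among the \(\phi_i\); then \(\{\phi_{j_0}, \ldots, \phi_{j_{m'-1}}\}\) is a set without repetitions whose underlying set equals that of the multiset \(\Phi_{[0,m)}\). The output local fragment will consist of a single application of \(\interp^{\mathrm{slim}}_{\IKT}\) using this slim ordering and the same principal formula \(\psi_m \interp \phi\), with the remaining \(\interp\)-formulas \(\phi_i \interp \psi_i\) (for \(i\) not among the \(j_{i'}\)'s) absorbed into the weakening part \(\Gamma\) of the rule instance.

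For each of the \(m'+1\) progressing premises of this slim application we supply a \(\gentzenIL\)-proof. For \(i' < m'\), the \(i'\)-th slim premise is \(\psi_{j_{i'}}, (\{\phi_{j_0}, \ldots, \phi_{j_{i'-1}}\}, \phi) \interp \bot \Rightarrow \{\phi_{j_0}, \ldots, \phi_{j_{i'-1}}\}, \phi\), while the corresponding \(j_{i'}\)-th original premise provided by \(\pi\) is \(\psi_{j_{i'}}, (\Phi_{[0,j_{i'})}, \phi) \interp \bot \Rightarrow \Phi_{[0,j_{i'})}, \phi\). By our choice of the \(j_{i'}\)'s, these two sequents have the same underlying sets on each side, so the slim premise is obtained from the original by iterated contraction (available thanks to Lemma~\ref{lm:contraction}). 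The top (\(m'\)-th) slim premise \(\psi_m, (\{\phi_{j_0}, \ldots, \phi_{j_{m'-1}}\}, \phi) \interp \bot \Rightarrow \{\phi_{j_0}, \ldots, \phi_{j_{m'-1}}\}, \phi\) is obtained analogously from the \(m\)-th original premise. Extending the step corecursively produces a preproof of \(S\) in \(\slimgentzenIL\); it is genuinely a proof because every \(\interp^{\mathrm{slim}}_{\IKT}\) application in the output originates from an \(\interp_{\IKT}\) application in the input, so an infinite branch in the output must track an infinite branch in \(\pi\) and inherit its infinitely many progress steps. The main obstacle will be the bookkeeping of the slim sub-ordering and verifying that, after the contractions, the premises of the slim rule line up exactly with what the rule requires.
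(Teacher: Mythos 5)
Your argument is correct, and it rests on the same scaffolding as the paper's proof: the translation method of Subsubsection~\ref{subsub:translations}, a recursion through the non-modal rules, and contraction (Lemma~\ref{lm:contraction}) to remove repeated antecedents at the modal rule. Where you diverge is the treatment of the \(\interp_{\IKT}\) case: the paper performs a subinduction on the number of repetitions in \(\Phi_{[0,m)}\), contracting away one repeated \(\phi_k\) per round, reapplying \(\interp_{\IKT}\), and only relabelling to \(\interp^{\mathrm{slim}}_{\IKT}\) once no repetitions remain; you instead pass directly to the first occurrences \(j_0 < \cdots < j_{m'-1}\) of the distinct antecedents and apply \(\interp^{\mathrm{slim}}_{\IKT}\) in one shot, discarding the superfluous original premises and pushing the unused \(\phi_i \interp \psi_i\) into the weakening part \(\Gamma\). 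Your key observation is sound: the underlying set of \(\Phi_{[0,j_{i'})}\) is exactly \(\{\phi_{j_0}, \ldots, \phi_{j_{i'-1}}\}\), because every \(\phi_k\) with \(k < j_{i'}\) has its first occurrence at some \(j_\ell \leq k < j_{i'}\), hence \(\ell < i'\), and conversely each \(j_\ell\) with \(\ell < i'\) is below \(j_{i'}\); so each slim premise is reached from the corresponding \(\pi_{j_{i'}}\) (and the top one from \(\pi_m\)) by contraction alone. This is a mild streamlining of the paper's argument, trading the subinduction for a slightly more involved one-shot bookkeeping step. One presentational caveat: the corecursive step of the translation method must output an entire local fragment of \(\slimgentzenIL\) whose non-axiomatic leaves are progressing premises of \(\interp^{\mathrm{slim}}_{\IKT}\), so the non-modal cases are not literally new corecursion inputs; they are the inductive clause of an inner recursion on local height that assembles the fragment, as the paper states explicitly and as you should too.
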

\begin{proof}
  Say that a proof is \emph{locally slim} if all the applications of \(\interp_{\mathrm{IK4}}\) in its main fragment are instances of \(\interp^{slim}_{\mathrm{IK4}}\).
  We are going to show that every proof of a sequent can be transformed into a locally slim proof of the same sequent.
  To obtain a translation from \(\gentzenIL\) to \(\slimgentzenIL\) is suffices to use the translation method of Subsubsection~\ref{subsub:translations}.
  So assume that \(\pi \vdash S\) in \(\gentzenIL\), we proceed by induction on the local height of \(\pi\) and cases in the last rule applied.
  The only non-trivial case is when \(\pi\) is of shape
  \[
    \AxiomC{\(\begin{bmatrix}\pi_i\\
        \psi_i, (\Phi_{[0,i)}, \phi) \interp \bot \Rightarrow \Phi_{[0,i)}, \phi
    		\end{bmatrix}_{m...i...0}\)}
    \RightLabel{\(\interp_{\mathrm{IK4}}.\)}
    \UnaryInfC{\(\{\phi_i \interp \psi_i\}_{i < m}, \Gamma \Rightarrow \Delta, \psi_m \interp \phi\)}
    \DisplayProof
  \]
  We proceed by a subinduction in the number of repetitions in \(\Phi_{[0,m)}\).
  If there are no repetitions in \(\Phi_{[0,m)}\), then it is clear that \(\Phi_{[0,m)}\) is a set.
  So we only need to change the rule label from \(\interp_{\mathrm{IK4}}\) to \(\interp^{slim}_{\mathrm{IK4}}\).
  Now assume that there is a repeated formula \(\phi_k = \phi_j\) for \(k > j\).
  Then, for each \(i > k\) define \(\rho_i\) as the proof in \(\gentzenIL\) obtain from eliminating \(\ctr\) from
	\[
		\AxiomC{\(\pi_i\)}
		\noLine
    \UnaryInfC{\(\psi_i, (\Phi_{[0,i)}, \phi) \interp\bot\Rightarrow\Phi_i,\phi\)}
		\RightLabel{\(\ctr\)}
    \UnaryInfC{\(\psi_i,(\Phi_{[0,i)}, \phi)\interp\bot \Rightarrow \Phi_{[0,k)},\Phi_{(k,i)},\phi\)}
		\RightLabel{\(\ctr\).}
    \UnaryInfC{\(\psi_i, (\Phi_{[0,k)},\Phi_{(k,i)},\phi)\interp\bot\Rightarrow \Phi_{[0,k)},\Phi_{(k,i)},\phi\)}
    		\DisplayProof
	\]
  Then we define the proof \(\rho\) as
  \[
    \AxiomC{\(\rho_m \quad \cdots \quad \rho_{k+1} \quad \pi_{k-1} \cdots \quad \cdots \phi_0\)}
    \RightLabel{\(\interp_{\mathrm{IK4}}\)}
    \UnaryInfC{\(\{\phi_i \interp \psi_i\}_{i < m, i \neq k}, \Gamma, \phi_k \interp \psi_k \Rightarrow \Delta, \psi_m \interp \phi\)}
    \DisplayProof
  \]
  where \(\interp_{\mathrm{IK4}}\) have been applied with the ordering
  \[
    \phi_0 \interp \psi_0, \ldots, \phi_{k-1} \interp \psi_{k-1}, \phi_{k+1} \interp \psi_{k+1}, \ldots, \phi_{m-1} \interp \psi_{m-1}
  \]
    and principal formula \(\psi_m \interp \phi\).
  Since the number of repetitions have decreased, we can apply the induction hypothesis.
\end{proof}

Finally, we are going to see how to transform a non-wellfounded proof in \(\slimgentzenIL\) into a cyclic proof in \(\cgentzenIL\).

\begin{definition}
  Let \(\pi\) be a proof in \(\slimgentzenIL\) and \(w \in \nodes(\pi)\).
  A node \(w\) is called \emph{finite} if for any \(v < u < w\) we have \(S^\pi_v \neq S^\pi_u\) and a finite node \(w\) is called \emph{cyclic} if there is a \(v < w\) such that \(S^\pi_v = S^\pi_w\).
      We notice that this \(v\) must be unique and is called the \emph{cyclic companion of \(w\)}, denoted \(w^\circ\).
\end{definition}

\begin{theorem}\label{thm:slimgentzenIL-to-cgentzenIL}
  If \(\slimgentzenIL \vdash S\) then \(\cgentzenIL \vdash S\).
\end{theorem}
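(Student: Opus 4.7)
The plan is to convert a non-wellfounded proof of \(S\) in \(\slimgentzenIL\) into a cyclic proof by folding back whenever the same sequent recurs at a progressing premise. The key point is that the subformula property (Proposition~\ref{lem:sub}) together with the slim constraint on the \(\interp\)-rule bounds the possible sequents occurring at progressing premises, which provides the pigeonhole needed to close cycles.

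\textbf{Key finiteness observation.} First I would note that every progressing premise of \(\pi\) has the form \(\psi,(\Phi,\phi)\interp\bot \Rightarrow \Phi,\phi\), where \(\Phi\) is a \emph{set} of \(\IL\)-formulas (by the slim constraint on \(\interp^{\mathrm{slim}}_{\mathrm{IK4}}\)), and \(\psi,\phi\in\sub(S)\), \(\Phi\subseteq\sub(S)\) by the subformula property applied to \(\pi\). The collection \(\mathcal{C}_S\) of all such ``canonical'' sequents is therefore finite, with cardinality bounded by roughly \(|\sub(S)|^2\cdot 2^{|\sub(S)|}\).

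\textbf{Construction of the cyclic proof.} Given \(\pi \vdash S\) in \(\slimgentzenIL\), I build a cyclic preproof \(\pi^\circ\) by a top-down traversal of \(\pi\). At each node \(w\), if \(w\) is a progressing premise (i.e., the premise of an \(\interp^{\mathrm{slim}}_{\mathrm{IK4}}\) application in \(\pi\)) and some strict ancestor \(v\) already present in the tree under construction is also a progressing premise with \(S^\pi_v=S^\pi_w\), then I close \(w\) as a \(\mathrm{Repeat}\) leaf with \(w^\circ := v\). Otherwise I copy the rule applied at \(w\) in \(\pi\) and continue recursively on its premises. Finiteness of \(\pi^\circ\) follows from K\"onig's lemma: the tree is binary-branching, and any infinite branch would contain infinitely many progressing premises (between progress points we only apply non-modal rules, which strictly reduce sequent size, exactly as in Proposition~\ref{lm:no-branch-condition}), and by the construction these progressing premises would all carry distinct sequents in \(\mathcal{C}_S\), contradicting its finiteness. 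Finally, \(\pi^\circ\) is a genuine cyclic proof because each back edge \(w \mapsto w^\circ\) traverses the \(\interp^{\mathrm{slim}}_{\mathrm{IK4}}\) instance whose progressing premise is \(w\), hence the path from \(w^\circ\) to \(w\) contains a progress step at its final edge.

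\textbf{Main obstacle.} The most delicate point is verifying the progress condition along every back edge: one must observe that \(w\) is itself the progressing premise of an \(\interp^{\mathrm{slim}}_{\mathrm{IK4}}\) application lying strictly between \(w^\circ\) and \(w\), so that the path from \(w^\circ\) to \(w\) crosses progress at least once. In addition, the branch-finiteness argument relies on the local adaptation of Proposition~\ref{lm:no-branch-condition} to \(\slimgentzenIL\), which is immediate since only the shape of the modal rule changed and the non-modal rules still strictly reduce sequent size.
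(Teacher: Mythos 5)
Your proposal is correct and follows essentially the same approach as the paper: bound the set of sequents that can occur as premises of \(\interp^{\mathrm{slim}}_{\mathrm{IK4}}\) via the subformula property together with the slim constraint, fold the non-wellfounded tree at the first repetition, and deduce finiteness of the resulting cyclic tree from K\"onig's lemma plus this pigeonhole, using that non-modal rules strictly shrink sequent size so every infinite branch crosses \(\interp^{\mathrm{slim}}_{\mathrm{IK4}}\) infinitely often.

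One small conceptual difference: you close cycles only at progressing premises of \(\interp^{\mathrm{slim}}_{\mathrm{IK4}}\), whereas the paper keeps every node \(w\) for which no pair of strict ancestors \(v < u < w\) shares a sequent (the ``finite'' nodes) and places a \(\mathrm{Repeat}\) at the \emph{first} node whose sequent matches some strict ancestor, regardless of where that match occurs. Your restriction makes the progress condition along each back-loop immediate, since the final edge into \(w\) is itself progressing; the paper must instead argue separately, via the size-reduction observation, that some \(\interp^{\mathrm{slim}}_{\mathrm{IK4}}\) lies on the loop. Both yield valid cyclic proofs. A minor slip: \(\interp^{\mathrm{slim}}_{\mathrm{IK4}}\) has an arbitrary finite number of premises, so \(\pi^\circ\) is not binary-branching; ``finitely branching'' is what K\"onig's lemma actually needs, and is all you use.
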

\begin{proof}
  Let \(\tau \vdash S\) in \(\slimgentzenIL\).
Using the subformula property, each premise of the modal rule in a proof in \(\slimgentzenIL\) is determined by a finite set of formulas in \(\sub(S)\) (since the application of the rule is slim we can assume it is a subset) and two formulas of \(\sub(S)\), for example given subset \(\Phi\) and formulas \(\psi,\phi\) the associated premise would be \(\psi, (\Phi, \phi) \interp \bot \Rightarrow \Phi, \phi\).
We can see then, that the possible number of premise sequents of modal rules is bounded by \(2^kk^2\) where \(k = |\sub(S)|\).

We define the tree \(\tau'\) as \(\tau \restricts_N\) where
\[
  N = \{w \in \nodes(\tau) \mid w \text{ is finite}\},
\]
and the rules at the cyclic nodes has been replaced for Repeat.
Define cyclic tree \(\rho = (\tau', w \text{ cyclic} \mapsto w^\circ)\) and let us show that it is the desired cyclic proof.
Clearly, \(\tau'\) is generated by the rules.
It must also fulfill the branch condition, since the premises of all the rules in \(\slimgentzenIL\) have a smaller size than the conclusion, except for \(\interp^{\mathrm{slim}}_{\mathrm{IK4}}\).

All left to show is that \(\tau'\) is finite.
Assume otherwise, then by K\"onig's Lemma (as \(\tau'\) is finitely branching), it must have an infinite branch \(b\).
This is also an infinite branch of \(\tau\) so it must go through the rule \(\interp^{\mathrm{slim}}_{\mathrm{IK4}}\) infinitely many times, so \(\{i \in \mathbb{N} \mid S^\tau_{b \restricts i} \text{ is a premise of \(\interp^{\mathrm{slim}}_{\mathrm{IK4}}\)}\}\) is infinite.
However, there is only a finite amount of possible sequents for a premise of \(\interp^{\mathrm{slim}}_{\mathrm{IK4}}\) in \(\tau\) so there are \(i < j\) such that \(S^\tau_{w_i} = S^\tau_{w_j}\).
This implies that \(w_{j+1}\) is not a node of \(\tau'\), since it is not finite, a contradiction.
\end{proof}

The following result is obtained directly from Theorem~\ref{thm:gentzenIL-to-slimgentzenIL} and Theorem~\ref{thm:slimgentzenIL-to-cgentzenIL}.

\begin{corollary}
If \(\gentzenIL \vdash S\) then \(\cgentzenIL \vdash S\).
\end{corollary}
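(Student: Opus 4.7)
The plan is essentially transitivity: chain Theorem~\ref{thm:gentzenIL-to-slimgentzenIL} with Theorem~\ref{thm:slimgentzenIL-to-cgentzenIL}. Given \(\gentzenIL \vdash S\), first apply Theorem~\ref{thm:gentzenIL-to-slimgentzenIL} to obtain a proof \(\tau \vdash S\) in \(\slimgentzenIL\), then feed \(\tau\) to Theorem~\ref{thm:slimgentzenIL-to-cgentzenIL} to extract a cyclic proof \(\rho \vdash S\) in \(\cgentzenIL\). Both theorems preserve the conclusion sequent, so the composition yields exactly what the corollary claims.

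Since there is essentially nothing to add beyond the composition itself, the proof is a one-liner: the desired cyclic proof is produced by applying the two translations in sequence. If one wanted to make the underlying argument transparent, it could be worth remarking why the detour through \(\slimgentzenIL\) is needed. In \(\gentzenIL\) a single sequent can be the premise of a \(\interp_{\IKT}\) rule in infinitely many ways, because the multiset \(\Phi_{[0,m)}\) of antecedents may carry arbitrarily many repetitions, so the bound used in the proof of Theorem~\ref{thm:slimgentzenIL-to-cgentzenIL} (the set of possible premise sequents of the modal rule has size at most \(2^k k^2\), where \(k = |\sub(S)|\)) would fail. Once the modal rule is restricted to its slim form, that bound holds and K\"onig's Lemma produces the finite core required to form a cyclic proof.

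No step here is an obstacle in its own right, the whole content having been established in the two preceding theorems; the only thing to be careful about is that the two translations indeed preserve the root sequent, which is evident from their statements.
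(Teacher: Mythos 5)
Your proof is correct and matches the paper's argument exactly: the corollary is obtained by composing Theorem~\ref{thm:gentzenIL-to-slimgentzenIL} with Theorem~\ref{thm:slimgentzenIL-to-cgentzenIL}, both of which preserve the conclusion sequent. Your added remark on why the detour through \(\slimgentzenIL\) is needed (to bound the set of possible modal-rule premises) is accurate and consistent with the proof of Theorem~\ref{thm:slimgentzenIL-to-cgentzenIL}.
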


\section{Uniform interpolation}
\label{sec:interpolation}

In this section we are going to show how to prove the existence of uniform interpolation for \(\IL\) using the Fixpoint Theorem and non-wellfounded proofs.
This was inspired by the proof of the same result in \(\mu\)-calculus from \cite{uniform-interp-mu}).
First we need to show how to solve modal equational systems in \(\IL\).
Then, using a modal equational system and a proof search tree in \(\cgentzenIL\) we will construct a candidate of uniform interpolant.
We will prove that the candidate of uniform interpolant is indeed the uniform interpolant by corecursively constructing proofs in \(\gentzenIL\).
Finally, we will lift this result also to \(\ILP\) using a strong interpretation of \(\ILP\) in \(\IL\).

For definiteness, let us formulate what uniform interpolation for a logic \(L\) means.
We define it for any logic \(L\), although we are not going to define what a logic is.
In practice, in this paper \(L\) will be either \(\IL\) or \(\ILP\) (defined at Subsection~\ref{subsec:ILP}).
\begin{definition}
  Let \(L\) be a logic.
  For any formula \(\phi\) and vocabulary \(V \subseteq \vocab(\phi)\) we say that \(\iota\) is the \emph{\(L\)-uniform interpolant} of \(\phi\) if
  \begin{enumerate}
    \item \(\vocab(\iota) \subseteq V\),
    \item \(\IL \vdash \phi \to \iota\)
    \item For any \(\psi\) with \(\vocab(\psi) \subseteq V\) such that \(\IL \vdash \phi \to \psi\) we have that \(\IL \vdash \iota \to \psi\).
  \end{enumerate}
  We say that \(L\) \emph{has uniform interpolation} if any formula has an \(L\)-uniform interpolant.
\end{definition}

\subsection{Modal equational systems}
Our first step into uniform interpolation will be to study equations systems in \(\IL\).
In particular, we are interested in finding sufficient conditions under which an equation system will have an unique solution modulo equivalence in \(\IL\).
Thanks to the fixpoint theorem of \(\IL\) this study will be analogous to the case of the logic \(\GL\).
Nevertheless, due to the difference between \(\IL\) and \(\GL\) (particularly, \(\IL\) has an extra binary modality) we feel the need to write the adapted proofs here.

\begin{definition}
  We say that \(\phi\) is \emph{modalized} in a variable \(p\) if every occurrence of \(p\) in \(\phi\) is under the scope of a \(\interp\) connective.
  We say that \(\phi\) is \emph{propositional} in a variable \(p\) if no occurrence of \(p\) in \(\phi\) is under the scope of a \(\interp\) connective.
\end{definition}

Given a formula \(\phi\) we define the \emph{vocabulary of \(\phi\)} as the set of propositional variables occuring in \(\phi\), usually denoted as \(\vocab(\phi)\).
We start by formulating the fixpoint theorem in \(\IL\), a proof of this theorem for \(\IL\) can be found in \cite{Areces1998}.

\begin{theorem}[Fixpoint Theorem]
  Let \(\phi(p)\) be a formula such that \(p\) is modalized in \(\phi\).
  Then, there is a formula \(\psi\) with \(\vocab(\psi) \subseteq \vocab(\phi) \setminus \{p\}\) and
  \[
    \IL \vdash \psi \leftrightarrow \phi(\psi).
  \]
\end{theorem}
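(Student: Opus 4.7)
The plan is to adapt the classical de Jongh--Sambin proof of the fixpoint theorem for $\GL$ to the setting of $\IL$, which accommodates the binary modality $\interp$. The proof splits naturally into a uniqueness argument and an existence argument, with the main work being in the latter.

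First, for uniqueness I would establish the congruence
\[
  \IL \vdash \nec(p \leftrightarrow q) \to (\phi(p) \leftrightarrow \phi(q))
\]
whenever $p$ is modalized in $\phi$, by induction on the structure of $\phi$. The only nontrivial case is when the subformula is of the form $\chi \interp \rho$: since every occurrence of $p$ and $q$ lies under an $\interp$, one can apply the inductive hypothesis and conclude via necessitation together with axioms (K), (J1) and (J2). Combining this congruence with L\"ob's axiom yields that any two fixpoints $\psi_1, \psi_2$ of $\phi(p)$ satisfy $\IL \vdash \psi_1 \leftrightarrow \psi_2$, which will also be used to justify the candidate construction.

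For existence, I would proceed by induction on the number $n$ of occurrences of $\interp$-subformulas of $\phi$ containing $p$. The case $n = 0$ is trivial: set $\psi := \phi$. For the inductive step, select an innermost $\interp$-subformula $\chi \interp \rho$ containing $p$ (so $\chi$ and $\rho$ are propositional in $p$). Using classical propositional reasoning I would rewrite $\phi$ by replacing this subformula by a fresh variable $r$, obtaining $\phi'(p,r)$ in which $p$ has strictly fewer $\interp$-subformulas; applying the inductive hypothesis to $\phi'$ (viewed as a function of $p$ with $r$ as a parameter) and then unfolding $r := \chi \interp \rho$, one reduces the whole problem to constructing the fixpoint of a ``basic'' binary-modal formula $\chi(p) \interp \rho(p)$ with $\chi, \rho$ propositional in $p$.

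The main obstacle is this basic case. For $\GL$, the analogue step is easy because L\"ob's axiom makes $\nec\chi(\top)$ a fixpoint of $\nec\chi(p)$. For $\IL$ the binary modality $\interp$ forces one to balance the antecedent and succedent contributions of $p$: the candidate fixpoint has to be assembled from $\chi(\top), \chi(\bot), \rho(\top), \rho(\bot)$ combined by the Boolean connectives and the operators $\dnec$ and $\bnec$, so that the $\IL$-axioms (J1)--(J5) together with Lemma~\ref{lm:basics-of-IL} (in particular item~3, $\IL \vdash \phi \interp \bnec \phi$) suffice to verify the fixpoint equation. Once the basic case is settled, existence follows by composition of the inductively obtained solutions, and the vocabulary condition $\vocab(\psi) \subseteq \vocab(\phi) \setminus \{p\}$ is automatic since each reduction step eliminates occurrences of $p$ without introducing any new propositional variables.
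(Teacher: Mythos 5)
The paper does not prove the Fixpoint Theorem for \(\IL\); it states it and cites \cite{Areces1998}. So there is no internal proof to compare against, and your proposal must stand on its own as a genuine argument.

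Your outline of the uniqueness half and the reduction-by-induction skeleton follow the familiar de~Jongh--Sambin template, and there is no reason to doubt that the substitution lemma you want can be established via (K), (J1), (J2) and necessitation. However, there is a genuine gap where the theorem's actual content lies: the ``basic case'' of producing an explicit fixpoint of \(\chi(p) \interp \rho(p)\) with \(\chi,\rho\) propositional in \(p\). You write that the candidate ``has to be assembled from \(\chi(\top), \chi(\bot), \rho(\top), \rho(\bot)\) combined by the Boolean connectives and the operators \(\dnec\) and \(\bnec\)'' and that the \(\IL\)-axioms ``suffice to verify the fixpoint equation,'' but you neither exhibit the candidate nor verify anything. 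For \(\GL\) the basic case is a one-liner (\(\nec C(\top)\) works, by L\"ob); for \(\IL\) the presence of a binary modality makes this case substantially harder, and it is precisely why the fixpoint theorem for \(\IL\) is a nontrivial result due to de~Jongh and Visser rather than an immediate corollary of the \(\GL\) case. As written, your proposal identifies where the difficulty is but does not resolve it, so it is a plan of attack rather than a proof.

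A secondary concern: the reduction step is also underspecified. Replacing an innermost \(\interp\)-subformula \(\chi \interp \rho\) of \(\phi\) by a fresh \(r\) does yield a \(\phi'(p,r)\) with fewer modal subformulas in \(p\), but the fresh variable \(r\) need not be modalized in \(\phi'\), and ``unfolding \(r := \chi \interp \rho\)'' after applying the IH does not by itself recombine the two fixpoint equations into one; one must take care to show that the composite is again a fixpoint of the original \(\phi\), which in the classical proofs requires an extra use of the substitution lemma and L\"ob. This part is salvageable, but it should be spelled out; the basic case is where the proposal genuinely stops short.
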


We turn to the definition of modal equation system.
Note that given a \emph{substitution} \(f\) (i.e.\ a function from propositional variables to formulas) and a formula \(\phi\) we will write \(\phi[f]\) to mean the simultaneous substitution in \(\phi\) of each variable \(p\) for \(f(p)\).

\begin{definition}
  Let \(B\) and \(V\) be finite disjoint sets of propositional variables.
  A \((B,V)\)-modal equational system is a finite set \(\mathcal{E}\) of formulas of shape
  \[
    \{x \leftrightarrow \phi_x \mid x \in B\}
  \]
  such that for each \(x \in B\), \(\vocab(\phi_x) \subseteq B \cup V\).
  The elements of \(B\) are called the \emph{bound variables} of \(\mathcal{E}\), while the variables in \(V\) are called the \emph{free variables} of \(\mathcal{E}\).

  We say that a \((B,V)\)-modal equational system is \emph{orderable} if there is an enumeration \(x_0,\ldots,x_{n}\) of \(B\) such that for any \(j\) and \(i \leq j\), \(\phi_{x_j}\) is modalized in \(x_i\).

  A \emph{solution in \(\mathrm{IL}\)} of \(\mathcal{E}\) is a function \(y \in B \mapsto \psi_y\) such that for any \(x \in B\) we have that
  \begin{enumerate}
    \item \(\vocab(\psi_x) \subseteq V\), and
    \item \(\mathrm{IL} \vdash \psi_x \leftrightarrow \phi_x[y \in B \mapsto \psi_y]\).
  \end{enumerate}
\end{definition}

We want to show that any solvable equation system has an unique solution (modulo equivalence in \(\IL\)).
We will start with some lemmas, which are just restatements in \(\IL\) and generalizations of lemmas from \cite{smorynski}, that will guarantee the uniqueness (modulo equivalence).
We start with a simple lemma that will allow us to not reprove things twice.

\begin{lemma}[Simple Formalization Lemma]
   \(\IL \vdash \dnec \phi \to \psi\) implies \(\IL \vdash \nec \phi \to \nec \psi\).
\end{lemma}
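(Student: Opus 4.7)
The plan is to reduce the statement to a standard routine calculation in normal modal logic using the axioms (K), (4), and the necessitation rule, all of which are available in $\IL$. Unfolding the abbreviation, the hypothesis reads $\IL \vdash (\phi \wedge \nec \phi) \to \psi$, and the goal is $\IL \vdash \nec \phi \to \nec \psi$. No interpretability-specific reasoning (J1--J5 or L\"ob) is required beyond what is already packaged inside $\nec$ and the axioms of $\IL$.

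First I would apply necessitation to the hypothesis to obtain $\IL \vdash \nec((\phi \wedge \nec \phi) \to \psi)$. Then, using axiom (K) together with propositional reasoning (which gives the usual monotonicity/distribution over conjunction for $\nec$), this yields $\IL \vdash \nec(\phi \wedge \nec \phi) \to \nec \psi$, and further $\IL \vdash (\nec \phi \wedge \nec \nec \phi) \to \nec \psi$. Combining this with axiom (4), $\IL \vdash \nec \phi \to \nec \nec \phi$, we get $\IL \vdash \nec \phi \to (\nec \phi \wedge \nec \nec \phi)$ and hence $\IL \vdash \nec \phi \to \nec \psi$ by a simple propositional chaining.

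There is really no hard step here: the argument is the standard derivation in $\mathsf{K4}$ that the rule ``from $\dnec \phi \to \psi$ infer $\nec \phi \to \nec \psi$'' is admissible, transplanted into $\IL$ (which contains $\mathsf{K4}$ for the defined $\nec$). The only thing one has to be mildly careful about is that $\nec$ here is a defined operator, $\nec \chi = \neg \chi \interp \bot$; but the axioms (K) and (4) are postulated for this defined $\nec$, so all of the above reasoning goes through verbatim with no unfolding of definitions. The entire proof fits in essentially one display of three or four lines.
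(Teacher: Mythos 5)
Your proposal is correct and is exactly the argument the paper has in mind: the paper's one-line proof reads ``By necessitation and axioms (K) and (4),'' which is precisely the necessitation + K-distribution over conjunction + axiom (4) chain you spell out. No differences to report.
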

\begin{proof}
  By necessitation and axioms \((K)\) and \((4)\).
\end{proof}

We need to establish a substitution lemma for \(\IL\).

\begin{lemma}
  \label{lm:equivalence}
  We have that
  \[
    \IL \vdash \nec(\phi_0 \leftrightarrow \phi_1) \wedge \nec(\psi_0 \leftrightarrow \psi_1) \to ((\phi_0 \interp \psi_0) \leftrightarrow (\phi_1 \interp \psi_1)).
  \]
\end{lemma}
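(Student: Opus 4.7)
The plan is a direct derivation in $\IL$ using propositional reasoning, necessitation, axiom (K), axiom (J1), and axiom (J2).

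First, I would derive some auxiliary facts from the antecedent. From the classical propositional tautology $(\phi_0 \leftrightarrow \phi_1) \to (\phi_0 \to \phi_1)$, by necessitation and axiom (K) we obtain $\IL \vdash \nec(\phi_0 \leftrightarrow \phi_1) \to \nec(\phi_0 \to \phi_1)$, and symmetrically $\IL \vdash \nec(\phi_0 \leftrightarrow \phi_1) \to \nec(\phi_1 \to \phi_0)$; the same holds for $\psi_0, \psi_1$. Then applying axiom (J1) four times yields
\[
  \IL \vdash \nec(\phi_0 \leftrightarrow \phi_1) \wedge \nec(\psi_0 \leftrightarrow \psi_1) \to (\phi_0 \interp \phi_1) \wedge (\phi_1 \interp \phi_0) \wedge (\psi_0 \interp \psi_1) \wedge (\psi_1 \interp \psi_0).
\]

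Next I would show each of the two directions of the biconditional separately by chaining through axiom (J2). For the forward direction, under the assumption of the four $\interp$-formulas above together with $\phi_0 \interp \psi_0$, applying (J2) first to $\phi_1 \interp \phi_0$ and $\phi_0 \interp \psi_0$ gives $\phi_1 \interp \psi_0$, and a second application of (J2) to $\phi_1 \interp \psi_0$ and $\psi_0 \interp \psi_1$ produces $\phi_1 \interp \psi_1$. So at the level of $\IL$-theorems, propositional combination yields
\[
  \IL \vdash \nec(\phi_0 \leftrightarrow \phi_1) \wedge \nec(\psi_0 \leftrightarrow \psi_1) \to ((\phi_0 \interp \psi_0) \to (\phi_1 \interp \psi_1)).
\]
The backward direction is obtained by the symmetric chain $\phi_0 \interp \phi_1$, $\phi_1 \interp \psi_1$, $\psi_1 \interp \psi_0$, producing $\phi_0 \interp \psi_0$.

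Finally, conjoining both implications by propositional reasoning gives the stated result. There is no real obstacle here; the only thing to be careful about is the bookkeeping between the boxed equivalences and the $\interp$-formulas they yield via (J1), and the correct order of the (J2) applications so that the intermediate succedents and antecedents match up. The entire argument stays inside the Hilbert calculus and uses nothing beyond what is present in the defining axioms of $\IL$.
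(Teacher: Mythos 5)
Your proof is correct and takes essentially the same route as the paper: derive the four $\interp$-formulas $\phi_0 \interp \phi_1$, $\phi_1 \interp \phi_0$, $\psi_0 \interp \psi_1$, $\psi_1 \interp \psi_0$ from the boxed equivalences via (J1), then chain through (J2) to obtain both directions of the biconditional. The only difference is that you spell out the two (J2) chains that the paper leaves implicit; those chains are correctly ordered.
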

\begin{proof}
  Note that \(\IL \vdash \nec(\phi_0 \leftrightarrow \phi_1) \to \nec(\phi_0 \to \phi_1)\), so by (J1) we obtain \(\IL \vdash \nec(\phi_0 \leftrightarrow \phi_1) \to \phi_0 \interp \phi_1\).
  It is easy to see then that
  \[
    \IL \vdash \nec(\phi_0 \leftrightarrow \phi_1) \wedge \nec(\psi_0 \leftrightarrow \psi_1) \to (\phi_0 \interp \phi_1) \wedge (\phi_1 \interp \phi_0) \wedge (\psi_0 \interp \psi_1) \wedge (\psi_1 \interp \psi_0).
  \]
  Then by (J2) we obtain the desired
  \[
    \IL \vdash \nec(\phi_0 \leftrightarrow \phi_1) \wedge \nec(\psi_0 \leftrightarrow \psi_1) \to ((\phi_0 \interp \psi_0) \leftrightarrow (\phi_1 \interp \psi_1)).
    \qedhere
  \]
\end{proof}

\begin{lemma}[Substitution Lemma]
  Given a formula \(\phi(p)\) we have that
  \begin{enumerate}
    \item (Propositional) If \(\phi\) is propositional in \(p\) then \(\IL \vdash (\psi \leftrightarrow \chi) \to (\phi(\psi) \leftrightarrow \phi(\chi))\).
    \item (First) \(\IL \vdash \dnec(\psi \leftrightarrow \chi) \to (\phi(\psi) \leftrightarrow \phi(\chi))\).
    \item (Second) \(\IL \vdash \nec(\psi \leftrightarrow \chi) \to \nec(\phi(\psi) \leftrightarrow \phi(\chi))\).
  \end{enumerate}
\end{lemma}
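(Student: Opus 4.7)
The plan is to establish (1) first by a direct induction on the structure of $\phi$, and then prove (2) and (3) by a simultaneous induction on $|\phi|$, where (3) follows from (2) at the same formula via the Simple Formalization Lemma.

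For (1), I would proceed by induction on $\phi$. If $\phi$ is $p$, $q\neq p$, or $\bot$, the claim is immediate (either by assumption or because $\phi(\psi) = \phi(\chi)$ syntactically). For $\phi = \phi_0 \to \phi_1$, both $\phi_0$ and $\phi_1$ are propositional in $p$ (since being propositional in $p$ is inherited by subformulas that are above any $\interp$), so the induction hypothesis and classical propositional reasoning give the result. For $\phi = \phi_0 \interp \phi_1$, the hypothesis that $\phi$ is propositional in $p$ forces $p$ not to appear in $\phi$ at all, hence $\phi(\psi) = \phi(\chi)$, and the claim is trivial.

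For (2), I would do induction on $|\phi|$. The atomic/$\bot$ cases follow from (1) together with the tautology $\dnec(\psi\leftrightarrow\chi) \to (\psi \leftrightarrow \chi)$. The $\to$ case is by the induction hypothesis on the two subformulas and propositional reasoning. The key case is $\phi = \phi_0 \interp \phi_1$: here I apply the induction hypothesis for part (3) to $\phi_0$ and $\phi_1$ (allowed since they are strictly smaller), obtaining
\[
  \IL \vdash \nec(\psi \leftrightarrow \chi) \to \nec\bigl(\phi_i(\psi) \leftrightarrow \phi_i(\chi)\bigr)
\]
for $i \in \{0,1\}$. Then Lemma~\ref{lm:equivalence} instantiated with $\phi_i(\psi), \phi_i(\chi)$ yields
\[
  \IL \vdash \nec(\psi\leftrightarrow\chi) \to \bigl((\phi_0(\psi) \interp \phi_1(\psi)) \leftrightarrow (\phi_0(\chi) \interp \phi_1(\chi))\bigr),
\]
and since $\IL \vdash \dnec(\psi\leftrightarrow\chi) \to \nec(\psi\leftrightarrow\chi)$, this gives (2) for $\phi$.

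For (3), I would simply apply the Simple Formalization Lemma to the instance of (2) already proved for $\phi$: from $\IL \vdash \dnec(\psi\leftrightarrow\chi) \to (\phi(\psi) \leftrightarrow \phi(\chi))$ we directly obtain $\IL \vdash \nec(\psi\leftrightarrow\chi) \to \nec(\phi(\psi) \leftrightarrow \phi(\chi))$. The main obstacle is choosing the right inductive structure so that the $\interp$ case of (2) has access to (3) for strictly smaller formulas; this is resolved by doing induction on $|\phi|$ and deriving (3) for $\phi$ as a corollary of (2) for $\phi$ at each step, so no circularity arises.
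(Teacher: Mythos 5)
Your proposal is correct and takes essentially the same route as the paper. The paper proves the Propositional case first, remarks that the Second lemma follows from the First by the Simple Formalization Lemma, and then proves the First by induction on $\phi$, handling the $\interp$ case by first applying the Simple Formalization Lemma to the induction hypothesis (thereby obtaining, in effect, the Second lemma for the subformulas) and then invoking Lemma~\ref{lm:equivalence}. Your ``simultaneous induction on $|\phi|$, with (3) derived from (2) at each level'' is precisely a more explicit packaging of that same chain of applications; the key tools (Lemma~\ref{lm:equivalence}, Simple Formalization, and the tautology $\dnec A \to \nec A$) and the overall inductive structure coincide.
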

\begin{proof}
  The Propositional Substitution Lemma is proven by induction on the complexity of \(\phi\) using propositional (non-modal) reasoning.
  The Second Substitution Lemma is a consequence of the first by applying the Simple Formalization Lemma, so we just prove the First Substitution Lemma.

  By induction on the complexity of \(\phi\), the only interesting case is when \(\phi(p) = \phi_0(p) \interp \phi_1(p)\).
  Using the induction hypothesis we have that 
  \[
    \IL \vdash \dnec (\psi \leftrightarrow \chi) \to (\phi_0(\psi) \leftrightarrow \phi_0(\chi)) \wedge (\phi_1(\psi) \leftrightarrow \phi_1(\chi)).
  \]
  By using properties of \(\dnec\) and of \(\nec\) we obtain
  \[
    \IL \vdash \dnec (\psi \leftrightarrow \chi) \to \nec (\phi_0(\psi) \leftrightarrow \phi_0(\chi)) \wedge \nec (\phi_1(\psi) \leftrightarrow \phi_1(\chi)).
  \]
  Finally, the desired
  \[
    \IL \vdash \dnec(\psi \leftrightarrow \chi) \to ((\phi_0(\psi) \interp \phi_1(\psi)) \leftrightarrow (\phi_0(\chi) \interp \phi_1(\chi)))
  \]
  is obtained using by Lemma~\ref{lm:equivalence}.
\end{proof}

The Substitution Lemma allow us to show a generalized version of the uniqueness of fixpoints.
This generalized version establishes the uniqueness of solution (modulo equivalence) for orderable modal equation systems.

\begin{lemma}[Generalized uniqueness of fixpoints]
  \label{lm:uniqueness}
  Let \(\phi_0(p_0,\ldots,p_n), \ldots, \phi_n(p_0,\ldots,p_n)\) be formulas such that \(\phi_i\) is modalized in \(p_0,\ldots, p_i\) and \(q_0,\ldots,q_n\) be new variables.
  Define the set \(H\) as containing the formulas
  \[
    \dnec(p_i \leftrightarrow \phi_i(p_0,\ldots,p_n)) 
    \qquad
    \dnec(q_i \leftrightarrow \phi_i(q_0,\ldots,q_n))
  \]
  for \(i \leq n\).
  Then
  \[
    \IL \vdash \bigwedge H \to \left(\bigwedge_{i \leq n} p_i \leftrightarrow q_i\right).
  \]
\end{lemma}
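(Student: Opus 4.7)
The plan is to prove the statement via the derived Löb rule ``if $\IL \vdash \nec \phi \to \phi$ then $\IL \vdash \phi$'' (which follows from axiom (L) and necessitation), applied to $\phi := \bigwedge H \to E$, where $E := \bigwedge_{i \leq n} (p_i \leftrightarrow q_i)$. So it suffices to derive $E$ under the assumptions $\bigwedge H$ and $\nec(\bigwedge H \to E)$.

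First I would observe that each conjunct of $\bigwedge H$ is of the form $\dnec \theta$, and that $\IL \vdash \dnec \theta \to \nec \dnec \theta$ holds (by axioms (K) and (4)). Combining with K distribution we obtain $\bigwedge H \to \nec \bigwedge H$, and hence from $\nec(\bigwedge H \to E)$ we derive $\bigwedge H \to \nec E$. So, inside the Löb step, we have at our disposal both (a) the equivalences $p_i \leftrightarrow \phi_i(p_0,\ldots,p_n)$ and $q_i \leftrightarrow \phi_i(q_0,\ldots,q_n)$ from $\bigwedge H$, and (b) $\nec(p_k \leftrightarrow q_k)$ for every $k \leq n$, coming from $\nec E$.

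Next I would derive the conjuncts $p_i \leftrightarrow q_i$ by a downward induction on $i$, from $n$ down to $0$. At stage $i$, the induction hypothesis yields $p_j \leftrightarrow q_j$ for every $j > i$; combining these with $\nec(p_j \leftrightarrow q_j)$ from $\nec E$ gives $\dnec(p_j \leftrightarrow q_j)$ for all $j > i$. Exploiting the assumption that $\phi_i$ is modalized in $p_0, \ldots, p_i$, the key modal fact to establish is
\[
\IL \vdash \bigwedge_{k \leq i} \nec(p_k \leftrightarrow q_k) \wedge \bigwedge_{j > i} \dnec(p_j \leftrightarrow q_j) \to \bigl(\phi_i(p_0,\ldots,p_n) \leftrightarrow \phi_i(q_0,\ldots,q_n)\bigr).
\]
This is a refinement of the Substitution Lemma, provable by induction on $\phi_i$: at atomic and Boolean subformulas we appeal to the ordinary First Substitution Lemma (noting that a modalized variable cannot occur at an atomic position, by definition of modalization); whenever we descend into a $\interp$-subformula we instead invoke Lemma~\ref{lm:equivalence}, whose hypothesis requires only $\nec$ of the equivalences between antecedents and succedents---and such $\nec$-equivalences are obtained from the available hypotheses via necessitation, K, and (4) (to turn $\nec$ into $\nec\dnec$). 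Composing the resulting equivalence $\phi_i(\vec{p}) \leftrightarrow \phi_i(\vec{q})$ with $p_i \leftrightarrow \phi_i(\vec{p})$ and $q_i \leftrightarrow \phi_i(\vec{q})$ from $\bigwedge H$ yields $p_i \leftrightarrow q_i$, closing the induction.

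The main obstacle I expect is the careful bookkeeping in the refined substitution lemma above: one must track that a variable modalized in $\phi_i$ always sits strictly below at least one $\interp$ connective, so that the weaker $\nec$-hypothesis is enough to push the substitution through via Lemma~\ref{lm:equivalence}, while non-modalized variables still require the full $\dnec$-hypothesis of the First Substitution Lemma. Once this refined substitution principle is in place, the combination of the derived Löb rule with the downward induction on $i$ mechanically produces the desired conclusion $\IL \vdash \bigwedge H \to E$.
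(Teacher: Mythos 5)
Your proof is correct, but it takes a genuinely different route from the paper at the point where L\"ob's principle enters. You apply the derived L\"ob \emph{rule} once, globally, to \(\bigwedge H \to E\); inside this single L\"ob step you already have \(\nec(p_k \leftrightarrow q_k)\) available for every \(k \leq n\) (via \(\bigwedge H \to \nec \bigwedge H\) and K), and the subsequent downward induction on \(i\) needs no further appeal to L\"ob — it only combines the refined substitution lemma with the equivalences in \(H\). The paper instead builds L\"ob into the downward induction itself: the inductive claim there is \(\IL \vdash \bigwedge H \wedge \bigwedge_{j<i}\nec(p_j \leftrightarrow q_j) \to \dnec(p_i \leftrightarrow q_i)\), and at each stage \(i\) the paper first derives \(\nec(p_i \leftrightarrow q_i) \to (p_i \leftrightarrow q_i)\) under the hypotheses, then pushes this under a box (necessitation, K, axiom (4)) and applies L\"ob's \emph{axiom} to extract \(\nec(p_i \leftrightarrow q_i)\). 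So the paper uses L\"ob \(n+1\) times, you use it once. What the two arguments share is exactly the ``refined substitution lemma'' you isolate: with \(\nec\)-equivalences for the modalized variables \(p_0,\dots,p_i\) and \(\dnec\)-equivalences for \(p_{i+1},\dots,p_n\), one gets \(\phi_i(\vec p) \leftrightarrow \phi_i(\vec q)\); the paper establishes this by writing \(\phi_i\) as a propositional combination \(\phi_i'\) of \(\interp\)-subformulas and the unmodalized variables, then applying the Second Substitution Lemma, Lemma~\ref{lm:equivalence}, and the Propositional Substitution Lemma, which matches your sketch in substance. Your single-L\"ob variant is cleaner in this respect and buys a tidier global structure, at the cost of a slightly more careful separation of ``what is available under the outer box'' from the object-level reasoning; the paper's per-stage L\"ob keeps all L\"ob applications local to a fixed \(i\), which avoids having to argue once and for all that \(\bigwedge H\) persists under \(\nec\).
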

\begin{proof}
  We are going to show that for \(i \leq n\)
  \( \IL \vdash \bigwedge H \wedge \left(\bigwedge_{j < i} \nec(p_j \leftrightarrow q_j)\right) \to \dnec (p_{i} \leftrightarrow q_{i}) \).
  Then, the desired result follows using these formulas via propositional reasoning.

  We proceed by induction on the reverse natural order on \(\{0,\ldots,n\}\)
  So, we have to show that
  \(
    \IL \vdash \bigwedge H \wedge \left(\bigwedge_{j < i } \nec(p_j \leftrightarrow q_j)\right) \to \dnec (p_{i} \leftrightarrow q_{i})
  \)
  assuming that for \(i < k \leq n\) we have
  \(
    \IL \vdash \bigwedge H \wedge \left(\bigwedge_{j < k } \nec(p_j \leftrightarrow q_j)\right) \to \dnec (p_{k} \leftrightarrow q_{k})
  \)
  Using these assumptions we can obtain that
  \[
    \tag{i}
    \IL \vdash \bigwedge H \wedge \left(\bigwedge_{j \leq i} \nec(p_j \leftrightarrow q_j)\right) \to \bigwedge_{i < j \leq n}\dnec (p_{j} \leftrightarrow q_{j}).
  \]
  Since \(\phi_i\) is modalized in \(p_0,\ldots,p_{i}\), there is a formula \(\phi'_i(r_0,\ldots,r_m, p_{i+1},\ldots,p_{n})\) without occurrences of \(p_0,\ldots,p_{i}\) nor \(\interp\) and formulas \(\psi_0(p_0,\ldots,p_n), \ldots, \psi_m(p_0,\ldots,p_n), \chi_0(p_0,\ldots,p_n), \ldots, \chi_m(p_0,\ldots,p_n)\) such that
  \[
    \tag{ii}
    \phi_i = \phi'_i(\psi_0 \interp \chi_0, \ldots, \psi_m \interp \chi_m, p_{i+1}, \ldots, p_n).
  \]
  Using the Second Substitution Lemma we get that for \(i \leq m\) \newline
  \(
    \IL \vdash \left(\bigwedge_{j \leq n} \nec(p_j \leftrightarrow q_j)\right) \to \nec(\psi_i(p_0,\ldots,p_n) \leftrightarrow \psi_i(q_0,\ldots,q_n)) \wedge \nec(\chi_i(p_0,\ldots,p_n) \leftrightarrow \chi_i(q_0,\ldots,q_n)),
  \)
  and then, by Lemma~\ref{lm:equivalence}, we have
  \newline
  \(
    \IL \vdash \left(\bigwedge_{j \leq n} \nec(p_j \leftrightarrow q_j)\right) \to ((\psi_i(p_0,\ldots,p_n) \interp \chi_i(p_0,\ldots,p_n))\leftrightarrow (\psi_i(q_0,\ldots,q_n) \interp \chi_i(q_0,\ldots,q_n)))
  \).
  \newline
  So using Propositional Substitution Lemma and remembering the shape of \(\phi_i\) displayed at (ii), we obtain
  \[
    \tag{iii}
    \IL \vdash \left(\bigwedge_{j \leq i} \nec(p_j \leftrightarrow q_j)\right) \wedge \left(\bigwedge_{i < j \leq n}\dnec(p_j \leftrightarrow q_j)\right) \to (\phi_i(p_0,\ldots,p_n) \leftrightarrow \phi_i(q_0,\ldots,q_n))
  \]
  (i) and (iii) gives
  \(
    \IL \vdash \bigwedge H \wedge \left(\bigwedge_{j < i}\nec(p_j \leftrightarrow q_j)\right) \to (\nec(p_i \leftrightarrow q_i) \to (\phi_i(p_0,\ldots, p_n)\leftrightarrow \phi_i(q_0,\ldots,q_n))).
  \)
  Then, by definition of \(H\), we also obtain
  \[
    \tag{iv}
    \IL \vdash \bigwedge H \wedge \left(\bigwedge_{j < i}\nec(p_j \leftrightarrow q_j)\right) \to (\nec(p_i \leftrightarrow q_i) \to (p_i \leftrightarrow q_i)).
  \]
  So by applying necessitation and using the properties of \(\nec,\dnec\) together with axiom (4), we conclude that
  \(
    \IL \vdash \bigwedge H \wedge \left(\bigwedge_{j < i}\nec(p_j \leftrightarrow q_j)\right) \to \nec (\nec(p_i \leftrightarrow q_i) \to (p_i \leftrightarrow q_i)).
  \)
  By L\"ob's axiom we obtain that
  \(
    \IL \vdash \bigwedge H \wedge \left(\bigwedge_{j < i}\nec(p_j \leftrightarrow q_j)\right) \to \nec (p_i \leftrightarrow q_i)
  \).
  Finally, by (iv) we can conclude 
  \[
    \IL \vdash \bigwedge H \wedge \left(\bigwedge_{j < i}\nec(p_j \leftrightarrow q_j)\right) \to \dnec (p_i \leftrightarrow q_i)
  \]
  as desired.
\end{proof}

The uniqueness (modulo equivalence) of solution in orderable modal equational systems will follow from the previous lemma.
In the following theorem we also show the existence of such a solution.

\begin{theorem}
  Let \(\mathcal{E}\) be an orderable \((B,V)\)-modal equational system.
  Then \(\mathcal{E}\) has an unique solution (up to logical equivalence) in \(\mathrm{IL}\).
\end{theorem}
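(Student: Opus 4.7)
The plan is to handle uniqueness and existence separately. Uniqueness is an immediate consequence of Lemma~\ref{lm:uniqueness}. Existence is proved by induction on $|B|$ using the Fixpoint Theorem.

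For uniqueness, suppose $y \mapsto \psi_y$ and $y \mapsto \chi_y$ are two solutions in \(\mathrm{IL}\). Fix the enumeration $x_0, \ldots, x_n$ witnessing orderability. Necessitation applied to the identities $\IL \vdash \psi_{x_i} \leftrightarrow \phi_{x_i}[y \mapsto \psi_y]$ and $\IL \vdash \chi_{x_i} \leftrightarrow \phi_{x_i}[y \mapsto \chi_y]$ yields all the formulas in the set $H$ of Lemma~\ref{lm:uniqueness}, instantiated with $p_i := \psi_{x_i}$ and $q_i := \chi_{x_i}$, after suitably renaming. Lemma~\ref{lm:uniqueness} then gives $\IL \vdash \bigwedge_{i \leq n}(\psi_{x_i} \leftrightarrow \chi_{x_i})$, proving uniqueness modulo equivalence.

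For existence, I would proceed by induction on $n = |B| - 1$. If $n = 0$, then $B = \{x_0\}$ and $\phi_{x_0}$ is modalized in $x_0$, so the Fixpoint Theorem supplies a $\psi$ with $\vocab(\psi) \subseteq V$ and $\IL \vdash \psi \leftrightarrow \phi_{x_0}(\psi)$. For the inductive step, apply the Fixpoint Theorem to the formula $\phi_{x_0}(x_0, x_1, \ldots, x_n)$ (modalized in $x_0$, treating $x_1, \ldots, x_n$ and the elements of $V$ as parameters) to obtain a formula $\psi_0'$ with $\vocab(\psi_0') \subseteq (B \setminus \{x_0\}) \cup V$ satisfying $\IL \vdash \psi_0' \leftrightarrow \phi_{x_0}(\psi_0', x_1, \ldots, x_n)$. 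Now define a reduced system $\mathcal{E}'$ on $B' = B \setminus \{x_0\}$ whose equations are $x_i \leftrightarrow \phi_{x_i}[x_0 := \psi_0']$ for $i = 1, \ldots, n$. The enumeration $x_1, \ldots, x_n$ witnesses that $\mathcal{E}'$ is orderable: for $1 \leq i \leq j$, the formula $\phi_{x_j}$ is modalized both in $x_i$ and in $x_0$, so any new occurrence of $x_i$ introduced by substituting $\psi_0'$ for $x_0$ sits under a $\interp$ inherited from the outer context of $\phi_{x_j}$. The induction hypothesis yields a solution $y \mapsto \psi_y$ of $\mathcal{E}'$ with $\vocab(\psi_y) \subseteq V$ for $y \in B'$, and then I set $\psi_{x_0} := \psi_0'[x_1 := \psi_{x_1}, \ldots, x_n := \psi_{x_n}]$. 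Verifying that this is a solution of $\mathcal{E}$ is a routine application of the Propositional Substitution Lemma, together with the defining equation of $\psi_0'$.

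The main delicate point is the modalization argument in step showing that $\mathcal{E}'$ is orderable: one must confirm that substituting a formula for a variable in which the host formula is already modalized preserves modalization in the remaining variables. Beyond that, everything is bookkeeping of vocabularies and routine substitution calculations, so I would keep those minimal in the written proof.
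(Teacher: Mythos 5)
Your proposal is correct. The uniqueness half is exactly the paper's: both invoke Lemma~\ref{lm:uniqueness} with the bound variables instantiated by the two candidate solutions, observing that the hypotheses in $H$ become theorems (by necessitation) once the solution equations hold. The existence half follows the same underlying idea — build a solution bottom-up by repeatedly applying the Fixpoint Theorem to the variable that becomes "fully modalized" — but you organize it differently. The paper runs a forward iteration over the enumeration, carrying a double-indexed family $\chi^j_i$ of partially-substituted formulas alongside the fixpoints $\psi_j$, and verifies the invariants at each stage. You instead induct on $|B|$: peel off $x_0$, take its fixpoint $\psi_0'$, push $\psi_0'$ into the remaining equations to form a strictly smaller orderable system $\mathcal{E}'$, and recurse. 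Your version avoids the $\chi^j_i$ bookkeeping entirely, at the modest cost of having to check once that $\mathcal{E}'$ is again orderable, which you do correctly: since every occurrence of $x_0$ in $\phi_{x_j}$ is under a $\interp$, every occurrence of $x_i$ in $\psi_0'$ that enters via the substitution inherits that guard, and the pre-existing occurrences of $x_i$ were already modalized. The composition-of-substitutions check (that $\psi_0'[\vec{x} \mapsto \vec{\psi}]$ together with the $\mathcal{E}'$-solution solves $\mathcal{E}$) is also sound. Overall the recursive decomposition is a genuine, slightly cleaner reorganization of the same construction.
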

\begin{proof}
  Proof of existence.
  Assume we have the enumeration \(x_0, \ldots, x_n\) of \(B\) such that for any \(j\) and \(i \leq j\), \(\phi_{x_j}\) is modalized in \(x_i\), let us denote \(\phi_{x_j}\) as \(\phi_j\).
  By recursion on \(j\) define formulas \(\psi_j\) for \(j \leq n\) and \(\chi^j_i\) for \(i \leq j \leq n\) such that
  \begin{enumerate}
    \item \(\vocab(\psi_j) \subseteq \{x_{j + 1}, \ldots, x_n\} \cup V\) and \(\vocab(\chi^j_i) \subseteq \{x_{j + 1}, \ldots, x_n\} \cup V\) for \(i \leq j\),
    \item We have that for \(j \leq n\)
      \[
        \IL \vdash \phi_j(\chi^{j-1}_0[x_j \mapsto \psi_j],\ldots, \chi^{j-1}_{j-1}[x_j \mapsto \psi_j], \psi_j, x_{j+1}, \ldots, x_{n}) \leftrightarrow \psi_j
      \]
      and for \(i \leq j \leq n\)
      \[
        \IL \vdash \phi_i(\chi^j_0,\ldots, \chi^j_j, x_{j+1}, \ldots, x_n) \leftrightarrow \chi^j_i.
      \]
  \end{enumerate}
  We define \(\psi_0\) as the fixpoint of \(\phi_0(x_0,\ldots,x_{n})\) with respect to modalized variable \(x_0\) and \(\chi^0_0 := \psi_0\).
  It is clear that \(\psi_0\) and \(\chi^0_0\) fulfill the conditions.
  Assume we have defined up to stage \(j\), and let us define stage \(j+1\).
  Notice that the formula \(\phi_{j+1}(\chi^{j}_0, \ldots, \chi^{j}_j,x_{j+1},\ldots,x_n)\) is modalized in \(x_{j+1}\), as \(\phi_{j+1}(x_0,\ldots,x_{n})\) is modalized in \(x_0,\ldots,x_{j+1}\); and its vocabulary is contained in \(\{x_{j+1},\ldots,x_n\} \cup V\)
  We define \(\psi_{j+1}\) to be the fixpoint of that formula at \(x_{j+1}\) so
  \begin{multline*}
    \vocab(\psi_{j+1}) \subseteq \{x_{j+2}, \ldots, x_n\} \cup V \text{ and}\\
    \IL \vdash \phi_{j+1}(\chi^{j}_0[x_{j+1} \mapsto \psi_{j+1}], \ldots, \chi^{j}_j[x_{j+1} \mapsto \psi_{j+1}],\psi_{j+1}, x_{j+2},\ldots,x_n) \leftrightarrow \psi_{j+1}.
  \end{multline*}
  Define \(\chi^{j+1}_i := \chi^j_i[x_{j+1} \mapsto \psi_{j+1}]\) for \(i < j+1\) and \(\chi^{j+1}_{j+1} := \psi_{j+1}\).
  Then, using the induction hypothesis and that the set of \(\IL\)-theorems is closed under substitution, it is easy to check that both of the needed properties are true.

  Proof of uniqueness.
  This follows straightforwardly from Lemma~\ref{lm:uniqueness}.
\end{proof}

\subsection{Construction of the interpolant}

During this section we will fix a set of propositional variables, a vocabulary, \(V\).
The idea is that we want to build an uniform interpolant with respect this vocabulary \(V\).

Given a sequent we want to construct a proof search in \(\cgentzenIL\) from which the interpolant will be defined.
Depending on the shape of the proof search the exact definition of the uniform interpolant will vary slightly, for this reason we will call this proof search an \emph{interpolation template}.

\begin{figure}
  \[
    \AxiomC{\([\psi^\epsilon, \Phi \interp \bot \Rightarrow \Phi]_{\Phi, \psi^\epsilon}\)}
    \RightLabel{\(\interp^*_{\mathrm{IK4}}\)}
    \UnaryInfC{\(\Sigma, \Gamma \Rightarrow \Lambda , \Delta\)}
    \DisplayProof
  \]
  where
  \begin{enumerate}
    \item \(\Gamma, \Delta\) are sets of propositional variables.
    \item \(\Sigma,\Lambda\) are sets of \(\interp\)-formulas.
    \item \(\Phi \in \{\Phi \subseteq \an(\Sigma) \cup \su(\Lambda) \mid |\Phi \setminus \an(\Sigma)| \leq 1\}\), in words, \(\Phi\) is a multiset with some antecessors of \(\Sigma\) and \emph{at most} one succedent in \(\Lambda\),
    \item \(\psi^\epsilon \in \su(\Sigma) \cup \an(\Lambda) \cup \{\epsilon\}\). We understand that \(\psi\) can be either a formula (in the corresponding set) or nothing. The second option is represented via the \(\epsilon\). In order to denote a formula or nothing we will usually write the superscript \(^\epsilon\), to remember that it may be nothing.
  \end{enumerate}

  \caption{\(\interp^*_{\mathrm{IK4}}\) rule}
\end{figure}

\begin{definition}
  An interpolation template is a cyclic preproof \(T\) (see Subsection~\ref{sub:cyclic-systems}) constructed using the rules of \(\ax\), \(\botL\), \(\botR\), \(\toL\), \(\toR\), \(\ninv\) and
  \[
    \AxiomC{}
    \RightLabel{Empty}
    \UnaryInfC{\(\Rightarrow\)}
    \DisplayProof
    \qquad
    \AxiomC{\(\Gamma \Rightarrow \Delta\)}
    \RightLabel{Wk}
    \UnaryInfC{\(\Gamma, \Gamma' \Rightarrow \Delta, \Delta'\)}
    \DisplayProof
  \]
  In addition, an interpolation template must fulfill the following conditions.
  \begin{enumerate}
    \item (Determinism) Any two non-repeat nodes labelled by the same sequent are instances of the same rule instantiation.
    \item (Axiomatic termination) Every node with an axiomatic set-sequent\footnote{A set-sequent is a sequent \(\Gamma \Rightarrow \Delta\) where any formula occurs at most once in \(\Gamma\) and at most once in \(\Delta\).} is a leaf.
    \item (Cyclic termination) For any non-axiomatic node \(w\) with a set-sequent, if there is a node \(v\) below \(w\) with the same sequent has \(w\) then \(w\) is a repeat.
    \item (Weakening condition)
      If some formula occurs more than once in the sequent of \(w\) (i.e., if the sequent at \(w\) is not a set-sequent), then \(w\) is obtained by an application of Wk of shape
      \[
        \AxiomC{\(\Gamma^s \Rightarrow \Delta^s\)}
        \RightLabel{Wk}
        \UnaryInfC{\(\Gamma \Rightarrow \Delta\)}
        \DisplayProof
      \]
      where \(\Gamma^s\) and \(\Delta^s\) are the sets of formulas obtained from the multisets \(\Gamma\) and \(\Delta\) by contracting all the repetitions.
      No other forms of \(\mathrm{Wk}\) appear in \(\tau\).
  \end{enumerate}
  We say that \(T\) is an interpolation template of a sequent \(S\) if \(S\) is at the root of \(T\).
\end{definition}

Since we need to construct an uniform interpolant for every formula it is necessary to show that every sequent has an interpolation template.

\begin{lemma}
Every sequent has an interpolation template.
\end{lemma}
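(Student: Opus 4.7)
The plan is to construct an interpolation template for a given sequent \(S\) by a canonical bottom-up proof search. Starting from \(S\) at the root, at every node we first normalize the current sequent to a set-sequent by inserting a single \(\mathrm{Wk}\) step that contracts every repetition (as prescribed by the weakening condition); this step is added only if the current sequent is not already a set-sequent. We then decide the next action from the shape of the resulting set-sequent alone, which will automatically enforce the determinism clause.

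More precisely, if the current set-sequent is axiomatic (concluded by \(\mathrm{ax}\), \(\bot\mathrm{L}\) or \(\mathrm{Empty}\)), we close the branch there (axiomatic termination). Otherwise, if the same set-sequent has already appeared strictly below on the current path, we turn the node into a \(\mathrm{Repeat}\) leaf (cyclic termination). In any other case, using a fixed enumeration of the formulas in \(\sub(S)\), we pick a principal formula and the corresponding rule: \(\bot\mathrm{R}\) whenever \(\bot\) appears in the succedent, \(\to\mathrm{L}\) or \(\to\mathrm{R}\) on the first implication in the enumeration, and \(\interp^*_{\mathrm{IK4}}\) once no propositional rule applies (branching over all admissible choices of \(\Phi\) and \(\psi^\epsilon\)). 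Because the choice depends only on the sequent, two nodes labelled with the same sequent receive the same rule instance, giving determinism.

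Finiteness of the template follows from the subformula property of Proposition~\ref{lem:sub}. Every sequent produced by the construction is built entirely from formulas in \(\sub(S)\): the propositional rules decompose formulas into subformulas, and the premises of \(\interp^*_{\mathrm{IK4}}\) involve only elements of \(\sub(S)\) together with formulas of the form \(\chi \interp \bot\), which already lie in \(\sub(S)\) by our definition. Hence only finitely many set-sequents can ever appear, so any infinite branch would have to revisit some previously seen set-sequent, at which point cyclic termination would have closed the branch, a contradiction. The resulting tree is therefore finite, with every leaf either axiomatic or a repeat node pointing to its first occurrence strictly below, as required for a cyclic preproof.

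The main obstacle is the careful bookkeeping needed to satisfy the weakening condition and determinism simultaneously. The premises of \(\to\mathrm{L}\), \(\to\mathrm{R}\), \(\bot\mathrm{R}\) and \(\interp^*_{\mathrm{IK4}}\) may fail to be set-sequents even when their conclusions are set-sequents (for example, \({\to}\mathrm{R}\) may move a formula \(\psi\) into the succedent where it already occurs), so a fresh \(\mathrm{Wk}\) step is inserted immediately above each such premise in order to restore the set property before the next strategic rule fires. One has to verify that no other uses of weakening slip into the tree and that the underlying set of each sequent is uniquely determined by the multiset, so that condition~4 is met without breaking determinism. These verifications are routine once the canonical strategy is fixed.
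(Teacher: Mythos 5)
Your proposal is correct and follows essentially the same strategy as the paper's proof: a canonical bottom-up proof search that normalizes each new node to a set-sequent via a single prescribed \(\mathrm{Wk}\) step, closes axiomatic nodes, creates a \(\mathrm{Repeat}\) upon revisiting a set-sequent, applies the remaining rules in a fixed priority using an enumeration of formulas, and then invokes the subformula property together with the finiteness of the set of possible set-sequents (plus K\"onig's lemma, which you leave implicit) to conclude the resulting tree is finite.
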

\begin{proof}
  We informally describe the process of, given a sequent \(\Gamma \Rightarrow \Delta\), construct a interpolation template for \(\Gamma \Rightarrow \Delta\) by stages. We will also guarantee that at some stage we will stop.
  We assume we are given an enumeration \(\{\phi_i\}_{i \in \mathbb{N}}\) of the formulas of \(\IL\). The enumeration will help us with the determinism condition.

  Stage \(0\). 
  We construct a tree whose root has the sequent \(\Gamma \Rightarrow \Delta\).
  In case it is a set-sequent we do not do anything else in this stage, in particular no rule will be attached to the root.
  If it is not a set-sequent we annotate the root with the rule \(\mathrm{Wk}\) and we add a node on top of it with the sequent \(\Gamma^s \Rightarrow \Delta^s\) and we do not attach any rule to it (yet).

  Stage \(n+1\).
  We make a list of all the leaves of the tree that do not have a rule attached to it. If the list is empty we finish the procedure.
  Otherwise traverse the list doing the following to each of its elements.
  \begin{enumerate}
    \item Look at the sequent \(\Gamma_w \Rightarrow \Delta_w\) attached to the leaf \(w\).
      If it is not a set-sequent apply \(\mathrm{Wk}\) to it obtaining a new leaf \(w'\) with sequent \(\Gamma^s_w \Rightarrow \Delta^s_w\) and no rule.
      Apply the next step to \(w'\).
      In case \(\Gamma_w \Rightarrow \Delta_w\) is already a set-sequent apply the next step directly in \(w\).
    \item Apply the first instruction possible from the following list, depending on the shape of the sequent.
      \begin{enumerate}
        \item If \(\bot\) occurs at the left side of the sequent annotate the node with the rule \(\bot\mathrm{L}\).
        \item If a propositional variable occurs on both sides of the sequent annotate the node with the rule \(\mathrm{ax}\).
        \item If the sequent is empty annotate the rule \(\mathrm{Empty}\) to the node.
        \item If there is a node below with the same sequent create annotate the node with the rule \(\mathrm{Repeat}\) and create a cycle to the (unique) node below with the same sequent.
        \item If \(\bot\) occurs at the right side of the sequent annotate the node with the rule \(\bot\mathrm{R}\) and create a new leaf above it with the corresponding premise.
        \item If there is an implication on the left side of the sequent look for the first one that occurs in the enumeration \(\{\phi_i\}_{i \in \mathbb{N}}\), let it be \(\phi \to \psi\).
          Annotate the node with the rule \(\to\mathrm{L}\) and add two leaves above the node with the corresponding premises of applying \(\to\mathrm{L}\) with principal formula \(\phi \to \psi\).
        \item If there is an implication on the right side of the sequent look for the first one that occurs in the enumeration \(\{\phi_i\}_{i \in \mathbb{N}}\), let it be \(\phi \to \psi\).
          Annotate the node with the rule \(\to\mathrm{R}\) and add one leaf above the node with the corresponding premise of applying \(\to\mathrm{R}\) with principal formula \(\phi \to \psi\).
        \item Otherwise, annotate the node with the rule \(\interp^*_{\IKT}\) and add as many leaves as necessary above the node to have all the needed premises for the application of the rule.
          Annotate each leaf with a different premise and with no rule.
      \end{enumerate}
  \end{enumerate}
  We want to argue that this process finishes, i.e., that at some stage all the leaves are annotated with a rule.
  Assume otherwise, note that after \(\omega\)-stages we would have construced an infinite finitely-branching tree.
  By K\"onig's lemma we will have an infinite branch.
  We can look at the set-sequents in this infinite branch, there must be infinitely many.
  However, all the rules we applied fulfill the subformula property, so the number of possible set-sequents appearing on the branch is finite.
  This implies that there must be a repeated set-sequent.
  However, in the second repetition of this set-sequent the branch should have been closed using the \(\mathrm{Repeat}\) rule, so no infinite branch would have been produced.
\end{proof}

Once we have interpolation templates for any sequent we are going to use the finite tree structure of the template (i.e., the template without cycles) to build a formula at each node \(w\).
This formula is called the \emph{pre-interpolant at \(w\)}.
We note it is not yet the interpolant, in particular because we are ignoring the cycles in its construction and each \(\mathrm{Repeat}\) node will introduce a variable that we will have to eliminate, as it will not belong to the vocabulary \(V\).
In particular, we are going to assume that for each \(\mathrm{Repeat}\) leaf \(w\) in the interpolation template we adjoin a new variable \(x_w\) to the set of propositional variables \(\mathrm{Var}\) (not to \(V\)).

\begin{definition}[Construction of pre-interpolant]
  Given an interpolation template \(T\) we construct a pre-interpolant \(\rho_w\) at each node \(w\) of \(\tau\) by induction in the (acyclic) tree structure of \(T\).
  If \(\Gamma_w \Rightarrow \Delta_w\) is the sequent at node \(w\) we will write \(\rho : \Gamma_w \Rightarrow \Delta_w\) to mean that \(\rho\) is the preinterpolant at node \(w\).
  Then, the preinterpolant is built using the following rules
  \[
    \AxiomC{}
    \UnaryInfC{\(\bot : p, \Gamma \Rightarrow p, \Delta \)}
    \DisplayProof
    \quad
    \AxiomC{}
    \UnaryInfC{\(\bot : \bot, \Gamma\Rightarrow \Delta\)}
    \DisplayProof
    \quad
    \AxiomC{}
    \UnaryInfC{\(\top : \varnothing \Rightarrow \varnothing\)}
    \DisplayProof
  \]

  \[
    \AxiomC{\(\rho_0 : \Gamma \Rightarrow \phi, \Delta\)}
    \AxiomC{\(\rho_1 : \psi,\Gamma \Rightarrow \Delta\)}
    \RightLabel{\(\to \mathrm{L}\)}
    \BinaryInfC{\(\rho_0 \vee \rho_1 : \phi \to \psi, \Gamma \Rightarrow \Delta\)}
    \DisplayProof
    \quad
    \AxiomC{\(\rho : \phi, \Gamma \Rightarrow \psi, \Delta\)}
    \RightLabel{\(\to \mathrm{R}\)}
    \UnaryInfC{\(\rho : \Gamma \Rightarrow \phi \to \psi, \Delta\)}
    \DisplayProof
  \]

  \[
    \AxiomC{}
    \LeftLabel{Current node is \(w\)}
    \RightLabel{Repeat}
    \UnaryInfC{\(x_w : \Gamma \Rightarrow \Delta\)}
    \DisplayProof
    \qquad
    \AxiomC{\(\rho :  \Gamma^s \Rightarrow \Delta^s\)}
    \RightLabel{Wk}
    \UnaryInfC{\(\rho :\Gamma \Rightarrow \Delta\)}
    \DisplayProof
  \]

  \[
    \AxiomC{\([\rho_{\Phi,\psi^\epsilon} : \psi^\epsilon, \Phi \interp \bot\Rightarrow \Phi]_{\Phi, \psi^\epsilon}\)}
    \RightLabel{\(\interp^*_{\mathrm{IK4}}\)}
    \UnaryInfC{\(\left(\bigwedge_{s \in S} \rho_s\right) \wedge \bigwedge (\Gamma \cap V) \wedge \bigwedge \neg (\Delta \cap V) : \Sigma, \Gamma \Rightarrow \Lambda , \Delta\)}
    \DisplayProof
  \]
  where \(S\) is the set of pairs of a sequence in \(\Sigma\) and either a formula in \(\Lambda\) or \(\epsilon\),\footnote{Since \(\Sigma\) is a multiset with a sequent in \(\Sigma\) we also need to take care to not repeat an element more times than its muiltiplicity in \(\Sigma\).} then given \(s = ((\phi_i \interp \psi_i)_{i < m}, \sigma^\epsilon) \in S\) we define \(\rho_s\) as follows:
  \begin{enumerate}
    \item If \(\sigma^\epsilon = \epsilon\)
      \[
        \neg \rho_{\Phi_{[0,m)}, \epsilon} \interp \bigvee_{i < m} \rho_{\Phi_{[0,i)}, \psi_i},
      \]

    \item If \(\sigma^\epsilon = \psi_m \interp \phi\) we define \(\rho_s\) as
      \[
        \bigvee_{i \leq m} \neg \left(
          \rho_{\Phi_{[0,i)} \cup \{\phi\}, \psi_i} \interp \neg \rho_{\Phi_{[0,i)} \cup \{\phi\}, \epsilon}
        \right).
      \]
  \end{enumerate}

\end{definition}

We state some easy properties of the pre-interpolant.

\begin{lemma}
  Let \(T\) be an interpolation template, \(w\) be a \(\mathrm{Repeat}\) node of \(T\) and \(v\) be a node of \(T\).
  We have that
  \begin{enumerate}
    \item If \(x_w\) occurs in \(\rho_v\), then \(v \leq w\).
    \item If \(v_0,\ldots, v_{k-1}\) are the children nodes of \(v\), \(\rho_{v_0}, \ldots, \rho_{v_{k-1}}\) are modalized in \(x_w\) and \(v\) is not a \(\mathrm{Repeat}\) node then \(\rho_v\) is modalized in \(x_w\).
    \item If \(v \leq w\) and the path from \(v\) to \(w\) goes through \(\interp^*_{\mathrm{IK4}}\), then \(\rho_v\) is modalized in \(x_w\).
  \end{enumerate}
\end{lemma}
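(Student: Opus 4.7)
The plan is to prove the three claims in order, each by a short structural argument, with later parts relying on earlier ones.

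For part 1, I would proceed by induction on the subtree of $T$ rooted at $v$, with case analysis on the rule applied at $v$. At the axiomatic leaves (rules $\ax$, $\botL$, $\emp$) the pre-interpolant $\rho_v$ is $\bot$ or $\top$, containing no variables, so the claim is vacuous. At a $\rep$ leaf, $\rho_v = x_v$, so $x_w$ occurring forces $v = w$ and hence $v \leq w$. At any other node, $\rho_v$ is assembled from the children's pre-interpolants together with formulas taken from the sequent $\Gamma_v \Rightarrow \Delta_v$; the latter formulas contain no newly introduced variable $x_{w'}$, so any occurrence of $x_w$ in $\rho_v$ must come from some child's pre-interpolant $\rho_{v_i}$, and the induction hypothesis gives $v_i \leq w$; since $v$ is an ancestor of $v_i$, we conclude $v \leq w$.

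For part 2, I would do a case analysis on the rule at $v$ (which by hypothesis is not $\rep$). For the axiomatic rules, $\rho_v \in \{\bot,\top\}$ has no occurrence of $x_w$ and is trivially modalized. For $\botR$, $\toL$, $\toR$, and $\Wk$, the pre-interpolant is a purely propositional combination (disjunction, or simply inherited) of the children's pre-interpolants; any occurrence of $x_w$ in $\rho_v$ therefore comes from an occurrence in some $\rho_{v_i}$, which by assumption is under an $\interp$, and this property is preserved by the propositional connectives. The rule $\interp^*_{\IKT}$ is in fact the easy case: each $\rho_s$ embeds the children's pre-interpolants strictly under an $\interp$, and the remaining conjuncts $\bigwedge(\Gamma \cap V)$ and $\bigwedge \neg(\Delta \cap V)$ are built from formulas of the original sequent and hence contain no $x_w$; so every occurrence of $x_w$ in $\rho_v$ already lies under an $\interp$.

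For part 3, I would single out a node $u$ on the path from $v$ to $w$ at which $\interp^*_{\IKT}$ is applied. The $\interp^*_{\IKT}$ case of part 2 already yields that $\rho_u$ is modalized in $x_w$, using part 1 to see that any occurrence of $x_w$ in $\rho_u$ can only arise through the children's pre-interpolants, which sit under $\interp$. I would then climb from $u$ back down toward $v$ one node at a time. Each node $v'$ with $v \leq v' < u$ is internal (having $u$ as a descendant), so not a $\rep$ leaf. Exactly one child of $v'$ lies on the path toward $w$; by part 1 the remaining children's pre-interpolants contain no $x_w$ and so are vacuously modalized, while the distinguished child is modalized in $x_w$ by the previous step of the climb. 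Part 2 applied to $v'$ then yields that $\rho_{v'}$ is modalized in $x_w$, and iterating gives the statement at $v$ (the case $v = u$ requires no iteration).

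The main delicate point lies in part 3: one has to notice that to apply part 2 at each climb step, \emph{all} children must be modalized in $x_w$, not merely the one on the path; part 1 is exactly the ingredient that makes the off-path siblings trivially modalized, so that the universal hypothesis of part 2 is actually met.
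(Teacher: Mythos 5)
Your proof is correct and follows essentially the same line as the paper: part 1 by induction on the subtree at \(v\), part 2 by inspection of the pre-interpolant constructors (with \(\interp^*_{\IKT}\) yielding modalization outright), and part 3 by propagating from the \(\interp^*_{\IKT}\) node toward \(v\), using part 1 to discharge the off-path siblings so that part 2's hypothesis is met. The paper phrases part 3 as an induction on the distance from \(v\) to the nearest \(\interp^*_{\IKT}\) application, which is just the unrolled version of your climb.
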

\begin{proof}
  Proof of 1.
  By induction in the height of the subtree generated at \(w\).
  If \(w\) is axiomatic it must be the case that \(w\) is the repeat where \(x_w\) is introduced, i.e., \(v = w\).
  Otherwise, let \(v\) have children \(v_0,\ldots,v_{k-1}\) for \(k>0\).
  Since \(x_w\) occurs at \(\rho_v\) it must be the case (by looking at the definiton of \(\rho_v\) for cases \({\to}\mathrm{L}\), \({\to}\mathrm{R}\), \(\mathrm{Wk}\), \(\interp^*_{\mathrm{IK4}}\)) that \(x_w\) occurs in at least one of \(\rho_{v_0},\ldots,\rho_{v_{k-1}}\) (as only Repeat introduces bound variables), let us assume it occurs in \(v_i\).
  By the induction hypothesis, we obtain that \(v_i \leq w\) and then \(v \leq v_i\) give us the desired \(v \leq w\).

  Proof of 2.
  Trivial by looking at the possible definitions of \(\rho_v\) (they all preserve modalized bounded variables except Repeat).

  Proof of 3.
  By induction in the distance of \(v\) to the last application of a \(\interp^*_{\mathrm{IK4}}\) in the path from \(v\) to \(w\).
  If \(v\) is the conclusion of \(\interp^*_{\mathrm{IK4}}\) itself, note that all the bound variables are modalized in \(\rho_v\) by definition.
  Otherwise, \(v\) must be a non-axiomatic node (as in the path from \(v\) to \(w\) a \(\interp^*_{\mathrm{IK4}}\) should occur), let \(v_0,\ldots,v_{k-1}\) be its children (for \(k > 0\)).
  Note that there is an unique \(i < k\) such that \(v_i \leq w\) while for \(j \neq i\) we have that \(v_j\) and \(w\) are incomparable.
  Then \(\rho_{v_j}\) for \(j \neq i\) is modalized in \(x_w\), as \(v_j\) and \(w\) are incomparable we indeed have that \(x_w\) does not occur in \(\rho_{v_j}\) by the first point of this lemma.
  Also, \(x_w\) is modalized in \(\rho_{v_i}\) by the induction hypothesis, so \(\rho_{v_0}, \ldots, \rho_{v_{k-1}}\) are modalized in \(x_w\).
  As \(v\) is not a repeat (it is non-axiomatic) we can conclude, by the second point of this lemma, that \(\rho_v\) is modalized in \(x_w\), as desired.
\end{proof}

Finally, we are prepared to define the interpolant given by an interpolation template \(T\).

\begin{lemma}[Definition of interpolant]
  Let \(T\) be a interpolation template.
  Then the set
  \[
    \mathcal{E}_T := \{x_w = \rho_{w^\circ} \mid w \text{ Repeat node of \(T\)}\}
  \]
  is an orderable \((B,V)\)-modal equational system, where \(B = \{x_w \mid w \text{ Repeat node of \(T\)}\}\).
  The application of the unique solution (up to logical equivalence) of \(\mathcal{E}_T\) to \(\rho_w\) will be denoted \(\iota_w\).
  The \emph{interpolant of \(T\)} is defined as \(\iota_T := \iota_\epsilon\).
\end{lemma}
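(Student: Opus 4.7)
The plan is to verify, in order, that (i) $\mathcal{E}_T$ is a finite set with $B \cap V = \varnothing$, (ii) each equation satisfies the vocabulary condition, and (iii) the system is orderable. Item (i) is immediate because a template is by definition a cyclic preproof whose underlying tree is finite, so there are only finitely many Repeat nodes, and the introduced variables $x_w$ are fresh and therefore disjoint from $V$. For item (ii) I would proceed by a straightforward induction on the acyclic tree structure of $T$, showing $\vocab(\rho_v) \subseteq V \cup B$ at every node $v$: the atomic and $\mathrm{Empty}$ clauses introduce no propositional variables; the Repeat clause introduces the bound variable $x_w \in B$; the $\ninv$ clause only brings in variables from $\Gamma \cap V$ and $\Delta \cap V$; and all remaining clauses merely combine pre-interpolants of children. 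In particular $\vocab(\rho_{w^\circ}) \subseteq V \cup B$ for every Repeat node $w$.

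For item (iii), I enumerate the Repeat nodes as $w_0, \ldots, w_n$ so that $w_0^\circ, \ldots, w_n^\circ$ are in non-increasing order of depth in the tree, with ties broken arbitrarily. I claim that, with this enumeration, $\rho_{w_j^\circ}$ is modalized in $x_{w_i}$ whenever $i \leq j$. Fix such $i \leq j$ and assume $x_{w_i}$ occurs in $\rho_{w_j^\circ}$ (otherwise the claim holds vacuously). By point 1 of the preceding lemma, $w_j^\circ \leq w_i$; combined with the fact that $w_i^\circ \leq w_i$, both $w_j^\circ$ and $w_i^\circ$ are prefixes of $w_i$ and hence comparable. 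Since $w_j^\circ$ is no deeper than $w_i^\circ$ by our ordering, it follows that $w_j^\circ \leq w_i^\circ$.

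To finish, I invoke the progress condition for cyclic proofs: since $\ninv$ is the only rule of the template producing progress, the path from $w_i^\circ$ to $w_i$ contains an instance of $\ninv$. As $w_j^\circ \leq w_i^\circ < w_i$, that instance also lies on the path from $w_j^\circ$ to $w_i$, and point 3 of the preceding lemma then yields that $\rho_{w_j^\circ}$ is modalized in $x_{w_i}$, as required. The main obstacle is choosing a suitable enumeration; once repeats are ordered by the depth of their companions, the prefix-comparability of $w_i^\circ$ and $w_j^\circ$ as prefixes of $w_i$ makes the verification short. Orderability then guarantees, via the theorem on orderable modal equational systems, the existence and uniqueness up to equivalence in $\IL$ of a solution to $\mathcal{E}_T$, so $\iota_w$ is well-defined.
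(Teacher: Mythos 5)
Your proposal is essentially correct and follows the same route as the paper: order the Repeat nodes so that companions that are ancestors come later, then use points 1 and 3 of the preceding lemma to get modalization. The paper enumerates all nodes of $T$ by increasing height of the generated subtree and derives the variable ordering from that traversal, whereas you order directly by depth of the companion nodes; the two are interchangeable, and you additionally spell out the vocabulary and finiteness conditions that the paper leaves implicit, which is a genuine improvement in completeness.

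One step deserves a closer look: you justify that the path from $w_i^\circ$ to $w_i$ contains an instance of $\ninv$ by ``the progress condition for cyclic proofs,'' but the paper defines an interpolation template only as a cyclic \emph{preproof}, not a cyclic proof, so the progress condition is not available by definition. The paper fills this in directly: every template rule other than $\ninv$ strictly decreases the size of the sequent when read bottom-up ($\botR$, $\toL$, $\toR$ trivially; $\mathrm{Wk}$ because the weakening condition forces its premise to be the contracted set-sequent; and $\ax$, $\botL$, $\mathrm{Empty}$ have no premises), while $w_i^\circ$ and $w_i$ carry the same sequent, so the path between them must pass through $\ninv$. Once you replace the appeal to progress by this short size argument, the proof is complete.
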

\begin{proof}
  The first step is to give an enumeration of the nodes of \(T\), \(w_0,\ldots,w_{k-1}\) such that if \(i \leq j\) then either \(w_i\) and \(w_j\) are incomparable or \(w_j \leq w_i\).\footnote{For example, it suffices to enumerate first all the leaves, then all the nodes whose generated subtree has height 1, then all nodes which generated subtree has height 2, and so on.}
  Then we obtain an enumeration \(x_0,\ldots,x_{m-1}\) of the variables in \(B\) as follows: traverse the list \(w_0,\ldots,w_{k-1}\) and whenever we are at \(w^\circ\), the cyclic companion of a node \(w\), add \(x_w\) to the end of the enumeration (in particular, if we are at the cyclic companion of multiple nodes \(w_{i_0},\ldots,w_{i_{n-1}}\) just add \(x_{w_{i_0}},\ldots,x_{w_{i_{n-1}}}\) at the end of the enumeration in an arbitrary order).
  Let \(v_i\) be the Repeat node corresponding to the variable \(x_i\), i.e., \(x_i = x_{v_i}\).
  Then the enumeration \(x_0,\ldots,x_{m-1}\) has the following property: for any \(i \leq j < m\) either
  \begin{enumerate}
    \item \(v^\circ_i\), \(v^\circ_j\) are incomparable, or
    \item \(v^\circ_j \leq v^\circ_i\).
  \end{enumerate}
  Let \(i \leq j < m\), we have to show that \(\rho_{v^\circ_j}\) is modalized in \(x_i\).
  If \(v^\circ_i\) and \(v^\circ_j\) are incomparable, then \(v_i\) and \(v^\circ_j\) are also incomparable (as \(v^\circ_i \leq v_i\) and \(v_i\) is a leaf) so \(x_i\) does not occur in \(\rho_{v^\circ_j}\) and then it is modalized.
  If \(v^\circ_j \leq v^\circ_i\), then \(v^\circ_j \leq v_i\) and the path from \(v^\circ_j\) to \(v_i\) must go through a \(\interp^*_{\mathrm{IK4}}\) rule, as the path from \(v^\circ_i\) to \(v_i\) must go through a \(\interp^*_{\mathrm{IK4}}\) in order to hit a repeat (all the other rules, when read bottom up, lower the size of the sequent, i.e., the size of the conclusion is strictly bigger than the size of the premises).
  Then, we know that \(\rho_{v^\circ_j}\) must be modalized in \(x_i\), as desired.
\end{proof}

\subsection{Verification}

We continue using the vocabulary \(V\) fixed at the start of the previous subsection.

We need to verify that our definition of uniform interpolant works.
In order to do this we need to show that certain proofs in \(\gentzenIL\) exists, this will be covered by Lemma~\ref{lm:first-verification} and Lemma~\ref{lm:second-verification}.
Instead of constructing the proofs directly in \(\gentzenIL\) we will use an auxiliary system \(\auxgentzenIL\), where the proofs are easier to construct.
The first thing we will need to show is that any proof in \(\auxgentzenIL\) can be transformed into a proof in \(\gentzenIL\), thus justifying the use of the auxiliary system.

\begin{definition}
We define the system \(\auxgentzenIL\) as \(\gentzenIL\) adding the rules the following rules
\[
  \AxiomC{\(\Gamma \Rightarrow \Delta\)}
  \RightLabel{Wk}
  \UnaryInfC{\(\Gamma, \Gamma' \Rightarrow \Delta, \Delta'\)}
  \DisplayProof
  \qquad
  \AxiomC{\(\Gamma \Rightarrow \Delta\)}
  \RightLabel{\(\equiv\)}
  \UnaryInfC{\(\Gamma' \Rightarrow \Delta'\)}
  \DisplayProof
  \qquad
  \AxiomC{\(\Gamma, \Gamma', \Gamma' \Rightarrow \Delta, \Delta', \Delta'\)}
  \RightLabel{Ctr}
  \UnaryInfC{\(\Gamma, \Gamma' \Rightarrow \Delta, \Delta'\)}
  \DisplayProof
\]
where
\begin{enumerate}
  \item In \(\mathrm{Wk}\) we have that \(\Gamma', \Delta'\) may be empty sets.
    In this particular case \(\mathrm{Wk}\) can be also be denoted as \(\mathrm{Eq}\).
  \item In \(\equiv\) either \(\Gamma = \Gamma'\), \(\Delta = \Delta_0, \phi\) and \(\Delta' = \Delta_0, \psi\) or  \(\Gamma = \Gamma_0, \phi\), \(\Gamma' = \Gamma_0, \psi\) and \(\Delta = \Delta'\), where \(\IL \vdash \phi \leftrightarrow \psi\).
  \item None of the new rules make progress.
\end{enumerate}
\end{definition}

\begin{lemma}
  If \(\auxgentzenIL \vdash S\) then \(\gentzenIL \vdash S\).
\end{lemma}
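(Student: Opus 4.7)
The plan is to view $\auxgentzenIL$ as $\gentzenIL + \cut$ extended by three non‑progressing admissible rules, and then to apply cut elimination. Concretely, I will show that each of Wk, Ctr, and $\equiv$ can be simulated within $\gentzenIL + \cut$, translate the $\auxgentzenIL$-proof into a $\gentzenIL + \cut$-proof via the method of translations from Subsubsection~\ref{subsub:translations}, and finally invoke Corollary~\ref{tm:gentzenILcut-to-gentzenIL} to remove the remaining cuts.

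The rules Wk and Ctr are immediate: Lemma~\ref{lm:weakening} gives admissibility of Wk in $\gentzenIL(+\cut)$, and Lemma~\ref{lm:contraction} gives eliminability of Ctr in $\gentzenIL(+\cut)$, hence in particular admissibility there. So in the corecursive step, any instance of these rules in the main local fragment can be replaced with a derivation of the same conclusion in $\gentzenIL + \cut$.

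The interesting rule is $\equiv$. Suppose we need to simulate an instance concluding $\Gamma \Rightarrow \Delta_0, \psi$ from $\Gamma \Rightarrow \Delta_0, \phi$, where $\IL \vdash \phi \leftrightarrow \psi$ (the left‑hand variant is symmetric). By Theorem~\ref{tm:IL-and-fgentzenILcut} we have $\fgentzenIL + \cut \vdash {\Rightarrow \phi \to \psi}$; applying invertibility of $\to\mathrm{R}$ via Lemma~\ref{lm:invertibility-of-implication} yields $\fgentzenIL + \cut \vdash \phi \Rightarrow \psi$, and Theorem~\ref{tm:fgentzenILcut-to-genteznILcut} then gives $\gentzenIL + \cut \vdash \phi \Rightarrow \psi$. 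Weakening this by Lemma~\ref{lm:weakening} to $\phi, \Gamma \Rightarrow \Delta_0, \psi$ and cutting against the premise $\Gamma \Rightarrow \Delta_0, \phi$ yields the desired conclusion. Thus $\equiv$ is admissible in $\gentzenIL + \cut$.

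Putting these pieces together, the corecursive step of the translation takes the main local fragment of an $\auxgentzenIL$-proof and replaces each Wk, Ctr, and $\equiv$ instance by its simulating derivation, leaving instances of the basic rules of $\gentzenIL$ (and newly introduced cuts) untouched. Because none of the three auxiliary rules is progressing, and the simulating derivations only add non‑progressing steps together with possibly new cuts, the progress structure of the infinite branches is inherited from the $\interp_{\IKT}$ instances already present, so the output is a genuine proof in $\gentzenIL + \cut$. A final application of Corollary~\ref{tm:gentzenILcut-to-gentzenIL} yields the required $\gentzenIL$-proof. The one point demanding care is precisely this preservation of progress: the simulation of $\equiv$ splices in a fixed, possibly non‑wellfounded $\gentzenIL + \cut$-proof of $\phi \Rightarrow \psi$, and I must check that its own branches either terminate in axioms or eventually hit a progressing $\interp_{\IKT}$, which is automatic because that proof is itself a proof in $\gentzenIL + \cut$.
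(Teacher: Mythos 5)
Your proposal is correct and follows essentially the same route as the paper: simulate each of $\Wk$, $\Ctr$, and $\equiv$ in $\gentzenIL + \cut$, translate corecursively, and then invoke $\cut$ elimination (Corollary~\ref{tm:gentzenILcut-to-gentzenIL}). The minor differences are cosmetic — the paper translates first into $\gentzenIL + \cut + \Wk$ and eliminates $\Wk$ afterwards, and it handles $\Ctr$ by cutting against an axiom sequent (Lemma~\ref{lm:axiom-sequent}) rather than by appealing to Lemma~\ref{lm:contraction} — and you are more explicit than the paper both about how the $\gentzenIL$-proof of $\phi \Rightarrow \psi$ is obtained (via the chain $\IL \rightsquigarrow \fgentzenIL + \cut \rightsquigarrow \gentzenIL + \cut$) and about why splicing a full side proof into the $\cut$ simulation does not break the progress condition.
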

\begin{proof}
  First, we notice that we can corecursively define a translation \(\alpha\) of proofs in \(\auxgentzenIL\) into \(\gentzenIL + \cut + \Wk\).
  \(\alpha\) commutes with all the rules different from \(\equiv\) and \(\Ctr\).

  Assume \(\pi\) has the shape
  \[
    \AxiomC{\(\pi_0\)}
    \noLine
    \UnaryInfC{\(\phi, \Gamma \Rightarrow \Delta\)}
    \RightLabel{\(\equiv\)}
    \UnaryInfC{\(\psi, \Gamma \Rightarrow \Delta\)}
    \DisplayProof
  \]
  where \(\IL \vdash \phi \leftrightarrow \psi\), so in particular \(\IL \vdash \psi \to \phi\).
  Then, there is a proof \(\tau \vdash \psi \Rightarrow \phi\) in \(\gentzenIL\).
  Then the translation is defined as
  \[
    \AxiomC{\(\tau\)}
    \noLine
    \UnaryInfC{\(\psi \Rightarrow \phi\)}
    \RightLabel{\(\Wk\)}
    \UnaryInfC{\(\psi, \Gamma \Rightarrow \Delta, \phi\)}
    \AxiomC{\(\alpha(\pi_0)\)}
    \noLine
    \UnaryInfC{\(\phi, \Gamma \Rightarrow \Delta\)}
    \RightLabel{\(\Wk\)}
    \UnaryInfC{\(\phi, \psi, \Gamma \Rightarrow \Delta\)}
    \RightLabel{\(\cut\)}
    \BinaryInfC{\(\psi, \Gamma \Rightarrow \Delta\)}
    \DisplayProof
  \]
  The cases of other instances of the \(\equiv\) rule are handled similarly.

  Finally, we treat the contraction case.
  We will assume that we only contract one formula at the right side of the sequent, if we contract more than one formula (also on the left side) it can be handled similarly.
  So assume \(\pi\) has the following shape
  \[
    \AxiomC{\(\pi_0\)}
    \noLine
    \UnaryInfC{\(\Gamma \Rightarrow \phi, \phi, \Delta\)}
    \RightLabel{Ctr}
    \UnaryInfC{\(\Gamma \Rightarrow \phi, \Delta\)}
    \DisplayProof
  \]
  Then the translation is defined as
  \[
    \AxiomC{\(\pi_0\)}
    \noLine
    \UnaryInfC{\(\Gamma \Rightarrow \phi, \phi, \Delta\)}
    \AxiomC{}
    \RightLabel{\(\Ax\)}
    \UnaryInfC{\(\phi, \Gamma \Rightarrow \phi, \Delta\)}
    \BinaryInfC{\(\Gamma \Rightarrow \phi, \Delta\)}
    \DisplayProof
  \]

  Then, any proof in \(\auxgentzenIL\) can be transformed into a proof in \(\gentzenIL + \cut + \Wk\), since \(\Wk\) is eliminable in \(\gentzenIL + \cut\) we obtain a proof in \(\gentzenIL + \cut\) and then since \(\cut\) is eliminable in \(\gentzenIL\) we can obtain the desired in \(\gentzenIL\).
\end{proof}

In the following two lemmas, we claim that a proof of \(\gentzenIL\) exists.
In fact, we are going to show that proofs in \(\auxgentzenIL\) exists, but the previous lemma let us bridge the two systems.

\begin{lemma}
  \label{lm:first-verification}
  Let \(T\) be a interpolation template for \(\Gamma \Rightarrow \Delta\).
  Then \(\gentzenIL\vdash \Gamma \Rightarrow \Delta, \iota_T\).
\end{lemma}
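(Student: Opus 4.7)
The plan is to strengthen the statement as follows: for every node $w$ of the template $T$ carrying a sequent $\Gamma_w \Rightarrow \Delta_w$, there is a proof in $\gentzenIL$ of $\Gamma_w \Rightarrow \Delta_w, \iota_w$, where $\iota_w$ is obtained from the pre-interpolant $\rho_w$ by substituting the unique solution of $\mathcal{E}_T$; taking $w = \epsilon$ yields the lemma. I build this family of proofs by corecursion along $T$, using the method of translations of Subsubsection~\ref{subsub:translations}, and I work in $\auxgentzenIL$ so that weakening, contraction and the $\equiv$ rule are freely available, invoking the embedding $\auxgentzenIL \to \gentzenIL$ already proved at the end. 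The corecursive step is by case analysis on the rule attached to $w$ in $T$.

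The easy cases go as follows. The axiomatic leaves ($\ax$, $\botL$, $\mathrm{Empty}$) give $\rho_w \in \{\bot, \top\}$ and the required proofs are immediate. The propositional cases ($\botR$, $\toL$, $\toR$) are handled by applying the same rule of $\gentzenIL$ and recursing at the children, with one $\veeR$ step in the $\toL$ case to expose the disjunction $\rho_{w_0} \vee \rho_{w_1}$ at the conclusion. A $\mathrm{Wk}$ step of $T$ is absorbed by admissibility of weakening. For a Repeat node $w$ with companion $w^\circ$, both nodes carry the same sequent, and the defining equation $x_w \leftrightarrow \rho_{w^\circ}$ of $\mathcal{E}_T$, combined with the uniqueness of its solution, yields $\IL \vdash \iota_w \leftrightarrow \iota_{w^\circ}$; one application of $\equiv$ therefore redirects the construction to the corecursive call at $w^\circ$, producing the loop in the non-wellfounded unfolding.

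The main obstacle is the $\interp^*_{\IKT}$ case. Here $\iota_w = \bigwedge_{s \in S} \iota_s \wedge \bigwedge (\Gamma \cap V) \wedge \bigwedge \neg(\Delta \cap V)$, which I split by iterated $\wedgeR$. Literals from $\Gamma \cap V$ are immediate by $\Ax$, and negated literals from $\Delta \cap V$ by one $\negR$ step plus $\Ax$. For each $s = ((\phi_i \interp \psi_i)_{i < m}, \sigma^\epsilon)$ with $\sigma^\epsilon = \epsilon$, the conjunct $\iota_s$ is a single $\interp$-formula that I make principal in an application of $\interpIKT$ with ordering $\phi_0 \interp \psi_0, \ldots, \phi_{m-1} \interp \psi_{m-1}$; the resulting premises follow from the corecursive calls at the template premises indexed by $(\Phi_{[0,i)}, \psi_i)$ and $(\Phi_{[0,m)}, \epsilon)$ after weakening, $\veeR$ and (for $i = m$) $\negL$. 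When $\sigma^\epsilon = \psi_m \interp \phi$, $\iota_s$ is a disjunction of negated $\interp$-formulas; I apply $\veeR$ and $\negR$ to move the constituents $A_i = \iota_{\Phi_{[0,i)} \cup \{\phi\}, \psi_i} \interp \neg \iota_{\Phi_{[0,i)} \cup \{\phi\}, \epsilon}$ to the left, and then apply $\interpIKT$ with principal formula $\psi_m \interp \phi$ under an ordering that interleaves the original $\phi_i \interp \psi_i$'s with the $A_i$'s, so that every resulting premise is discharged by the corresponding corecursive call at a template premise.

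Finally, every infinite branch of the constructed proof passes through $\interpIKT$ infinitely often: the corresponding infinite path in the unfolding of $T$ must traverse cycles, and every cycle in an interpolation template goes through $\interp^*_{\IKT}$ because all other rules strictly decrease the size of the sequent when read bottom up. Since $\interp^*_{\IKT}$ is always translated into $\interpIKT$, the sole progressing rule of $\gentzenIL$, the preproof so obtained is in fact a genuine non-wellfounded proof.
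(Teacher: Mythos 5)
Your proposal is correct and follows essentially the same route as the paper: the same strengthening to all nodes $w$, the same corecursive construction in $\auxgentzenIL$ with $\equiv$ handling the Repeat case, the same treatment of the two $\interp^*_{\IKT}$ subcases (including the interleaved ordering for the $\psi_m \interp \phi$ subcase and the single principal $\interp$-formula for the $\epsilon$ subcase), and the same productivity argument tracing infinite branches through cycles of $T$ and hence through $\interp_{\IKT}$.
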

\begin{proof}
  Given a node \(w\) of \(T\) let us write \(\Gamma_w \Rightarrow \Delta_w\) for the sequent at \(w\) in \(T\).
  We are going to define a function \(\alpha\) that given a node \(w\) of \(T\) returns a proof in \(\auxgentzenIL\) of \(\Gamma_w \Rightarrow \Delta_w, \iota_w\), where \(\iota_w\) is the interpolant at \(w\) in \(T\).
  We define \(\alpha\) corecursively in such a way that \(\alpha(w)\) is a preproof of \(\Gamma_w \Rightarrow \Delta_w, \iota_w\).
  Later, we will argue that this preproof is indeed a proof.
  We proceed by cases on the shape of \(w\).

  Case \(w\) is \(\ax\).
  We have that \(w\) is
  \[
    \AxiomC{}
    \RightLabel{\(\ax\)}
    \UnaryInfC{\(\bot : p, \Gamma \Rightarrow p, \Delta\)}
    \DisplayProof
  \]
  where \(\Gamma_w = p, \Gamma\), \(\Delta_w = p, \Delta\) and \(\rho_w = \bot\).
  Then \(\iota_w = \bot\) and the desired preproof is
  \[
    \AxiomC{}
    \RightLabel{\(\ax\)}
    \UnaryInfC{\(p, \Gamma \Rightarrow p, \Delta, \bot\)}
    \DisplayProof
  \]

  Case \(w\) is \(\botL\).
  We have that \(w\) is
  \[
    \AxiomC{}
    \RightLabel{\(\botL\)}
    \UnaryInfC{\(\bot : \bot, \Gamma \Rightarrow \Delta\)}
    \DisplayProof
  \]
  where \(\Gamma_w = \bot, \Gamma\), \(\Delta_w = \Delta\) and \(\rho_w = \bot\).
  Then \(\iota_w = \bot\) and the desired preproof is
  \[
    \AxiomC{}
    \RightLabel{\(\botL\)}
    \UnaryInfC{\(\bot, \Gamma \Rightarrow \Delta, \bot\)}
    \DisplayProof
  \]

  Case \(w\) is \(\emp\).
  We have that \(w\) is
  \[
    \AxiomC{}
    \RightLabel{\(\emp\)}
    \UnaryInfC{\(\top :  {\Rightarrow }\)}
    \DisplayProof
  \]
  where \(\Gamma_w = \varnothing\), \(\Delta_w = \varnothing\) and \(\rho_w = \top\).
  Then \(\iota_w = \top\) and the desired preproof is
  \[
    \AxiomC{}
    \RightLabel{\(\botL\)}
    \UnaryInfC{\(\bot \Rightarrow \bot\)}
    \RightLabel{\(\toR\)}
    \UnaryInfC{\(\Rightarrow \top\)}
    \DisplayProof
  \]

  Case \(w\) is \(\rep\).
  We have that \(w\) is
  \[
    \AxiomC{}
    \RightLabel{\(\rep\)}
    \UnaryInfC{\(x_w : \Gamma \Rightarrow \Delta\)}
    \DisplayProof
  \]
  where \(\Gamma_w = \Gamma\), \(\Delta_w = \Delta\), \(\rho_w = x_w\) and the cyclic companion of \(w\), \(w^\circ\) has sequent \(\Gamma \Rightarrow \Delta\).
  Let \(x_w \mapsto \chi_w\) be the solution of \(\mathcal{E}_T\), so \(\iota_w = \chi_w\) and \(\IL \vdash \chi_w \leftrightarrow \iota_{w^\circ}\).
  Then the desired preproof is
  \[
    \AxiomC{\(\alpha(w^\circ)\)}
    \noLine
    \UnaryInfC{\(\Gamma \Rightarrow \Delta, \iota_{w^\circ}\)}
    \RightLabel{\(\equiv\)}
    \UnaryInfC{\(\Gamma \Rightarrow \Delta, \chi_w\)}
    \DisplayProof
  \]

  Case \(w\) is \(\Wk\).
  Then \(w\) is of shape
  \[
    \AxiomC{\(w0\)}
    \noLine
    \UnaryInfC{\(\rho : \Gamma^s \Rightarrow \Delta^s\)}
    \RightLabel{\(\Wk\)}
    \UnaryInfC{\(\rho : \Gamma \Rightarrow \Delta\)}
    \DisplayProof
  \]
  where \(\Gamma_w = \Gamma\), \(\Delta_w = \Delta\), \(\Gamma_{w0} = \Gamma^s\), \(\Delta_{w0} = \Delta^s\) and \(\rho_{w} = \rho_{w0} = \rho\).
  Then \(\iota_{w} = \iota_{w0} = \iota\) and the desired preproof is
  \[
    \AxiomC{\(\alpha(w0)\)}
    \noLine
    \UnaryInfC{\(\Gamma^s \Rightarrow \Delta^s, \iota\)}
    \RightLabel{\(\Wk\)}
    \UnaryInfC{\(\Gamma \Rightarrow \Delta, \iota\)}
    \DisplayProof
  \] 

  Case \(w\) is \(\botR\).
  Then \(w\) is of shape
  \[
    \AxiomC{\(w0\)}
    \noLine
    \UnaryInfC{\(\rho : \Gamma \Rightarrow \Delta\)}
    \RightLabel{\(\Wk\)}
    \UnaryInfC{\(\rho : \Gamma \Rightarrow \Delta, \bot\)}
    \DisplayProof
  \]
  where \(\Gamma_w = \Gamma_{w0} = \Gamma\), \(\Delta_w = \Delta, \bot\),  \(\Delta_{w0} = \Delta\) and \(\rho_{w} = \rho_{w0} = \rho\).
  Then \(\iota_{w} = \iota_{w0} = \iota\) and the desired preproof is
  \[
    \AxiomC{\(\alpha(w0)\)}
    \noLine
    \UnaryInfC{\(\Gamma \Rightarrow \Delta, \iota\)}
    \RightLabel{\(\botR\)}
    \UnaryInfC{\(\Gamma \Rightarrow \Delta, \bot, \iota\)}
    \DisplayProof
  \]
  
  Case \(w\) is \(\toL\).
  Then \(w\) is of shape
  \[
    \AxiomC{\(w0\)}
    \noLine
    \UnaryInfC{\(\rho_0 : \Gamma \Rightarrow \Delta, \phi\)}
    \AxiomC{\(w1\)}
    \noLine
    \UnaryInfC{\(\rho_1 : \psi, \Gamma \Rightarrow \Delta\)}
    \RightLabel{\(\toL\)}
    \BinaryInfC{\(\rho_0 \vee \rho_1 : \phi \to \psi, \Gamma \Rightarrow \Delta\)}
    \DisplayProof
  \]
  where \(\Gamma_w = \phi \to \psi, \Gamma\) and \(\Delta_w = \Delta\).
  Then the desired preproof is
  \[
    \AxiomC{\(\alpha(w0)\)}
    \noLine
    \UnaryInfC{\(\Gamma \Rightarrow \Delta, \phi, \iota_{0}\)}
    \RightLabel{\(\Wk\)}
    \UnaryInfC{\(\Gamma \Rightarrow \Delta, \phi, \iota_{0}, \iota_1\)}
    \RightLabel{\(\veeR\)}
    \UnaryInfC{\(\Gamma \Rightarrow \Delta, \phi, \iota_{0}\vee \iota_1\)}
    \AxiomC{\(\alpha(w1)\)}
    \noLine
    \UnaryInfC{\(\psi, \Gamma \Rightarrow \Delta, \iota_{1}\)}
    \RightLabel{\(\Wk\)}
    \UnaryInfC{\(\psi, \Gamma \Rightarrow \Delta, \iota_0, \iota_{1}\)}
    \RightLabel{\(\veeR\)}
    \UnaryInfC{\(\psi, \Gamma \Rightarrow \Delta, \iota_0 \vee \iota_{1}\)}
    \RightLabel{\(\toL\)}
    \BinaryInfC{\(\phi \to \psi, \Gamma \Rightarrow \Delta, \iota_0 \vee \iota_1\)}
    \DisplayProof
  \]

  Case \(w\) is \(\toR\).
  Then \(w\) is of shape
  \[
    \AxiomC{\(w0\)}
    \noLine
    \UnaryInfC{\(\rho : \phi, \Gamma \Rightarrow \psi, \Delta\)}
    \RightLabel{\(\toR\)}
    \UnaryInfC{\(\rho : \Gamma \Rightarrow \phi \to \psi, \Delta\)}
    \DisplayProof
  \]
  where \(\Gamma_w = \Gamma\) and \(\Delta_{w} = \phi \to \psi, \Delta\).
  Then the desired preproof is
  \[
    \AxiomC{\(\alpha(w0)\)}
    \noLine
    \UnaryInfC{\(\phi, \Gamma \Rightarrow \psi, \Delta, \iota\)}
    \RightLabel{\(\toR\)}
    \UnaryInfC{\(\Gamma \Rightarrow \phi \to \psi, \Delta, \iota\)}
    \DisplayProof
  \]

  Case \(w\) is \(\ninv\).
  Then \(w\) is of shape
  \[
    \AxiomC{\(\left[
        \begin{matrix}
          w_{\Phi, \psi^\epsilon} \\
          \rho_{\Phi, \psi^\epsilon} : \psi^\epsilon, \Phi \interp \bot \Rightarrow \Phi
        \end{matrix}
    \right]_{\Phi, \psi^\epsilon}\)}
    \RightLabel{\(\ninv\)}
    \UnaryInfC{\(\rho : \Sigma, \Gamma \Rightarrow \Lambda, \Delta\)}
    \DisplayProof
  \]
  where \(\Gamma_w = \Sigma, \Gamma\), \(\Delta_w = \Lambda, \Delta\), \(\rho_w = \rho\), \(\Sigma, \Lambda\) are multisets of \(\interp\)-formulas, \(\Gamma, \Delta\) are multisets of propositional variables.

  By definition we know that \(\rho = \left(\bigwedge_{s \in S} \rho_s\right) \wedge  \bigwedge (\Gamma \cap V) \bigwedge \neg (\Delta \cap V)\), where \(S\) is the set of ordered pairs of a sequence of \(\Sigma\) and either a formula in \(\Lambda\) or \(\epsilon\).
  So \(\iota_w = \left(\bigwedge_{s \in S} \iota_s\right) \wedge  \bigwedge (\Gamma \cap V) \bigwedge \neg (\Delta \cap V)\).
  The desired preproof will be
  \[
    \AxiomC{\(\left[
      \begin{matrix}
        \tau_s \\
        \Sigma, \Gamma \Rightarrow \Lambda, \Delta, \iota_s
      \end{matrix}
    \right]_{s \in S}\)}
    \AxiomC{\(\left[
      \begin{matrix}
        \tau_{p} \\
        \Sigma, \Gamma \Rightarrow \Lambda, \Delta, p
      \end{matrix}
    \right]_{p \in \Gamma \cap V}\)}
    \AxiomC{\(\left[
      \begin{matrix}
        \tau_{\neg p} \\
        \Sigma, \Gamma \Rightarrow \Lambda, \Delta, \neg p
      \end{matrix}
    \right]_{p \in \Delta \cap V}\)}
    \doubleLine
    \RightLabel{\(\wedgeR\)}
    \TrinaryInfC{\(\Sigma, \Gamma \Rightarrow \Lambda, \Delta, \left(\bigwedge_{s \in S} \rho_s\right) \wedge  \bigwedge (\Gamma \cap V) \bigwedge \neg (\Delta \cap V)\)}
    \DisplayProof
  \]
  where we define the \(\tau_s\)'s, \(\tau_p\)'s and \(\tau_{\neg p}\)'s as follows.
  \begin{itemize}
    \item Definition of \(\tau_s\) for \(s = (\langle \phi_0 \interp \psi_0, \ldots, \phi_{m-1} \interp \psi_{m-1}\rangle, \epsilon)\).
      Then
      \[
        \iota_s = \neg \iota_{\Phi_{[0,m)}, \epsilon} \interp \bigvee_{i < m} \iota_{\Phi_{[0,i)}, \psi_i}.
      \]
      We define the proofs \(\tau_{s,i}\) for \(i \leq m\) as follows.
      First we define \(\tau_{s,m}\) as
      \[
        \AxiomC{\(\alpha(w_{\Phi_{[0,m)}, \epsilon})\)}
        \noLine
        \UnaryInfC{\(\Phi_{[0,m)} \interp \bot \Rightarrow \Phi_{[0,m)}, \iota_{\Phi_{[0,m)}, \epsilon}\)}
        \RightLabel{\(\negL\)}
        \UnaryInfC{\(\neg \iota_{\Phi_{[0,m)}, \epsilon}, \Phi_{[0,m)} \interp \bot \Rightarrow \Phi_{[0,m)}\)}
        \RightLabel{\(\Wk\)}
        \UnaryInfC{\(\neg \iota_{\Phi_{[0,m)}, \epsilon}, \left(\Phi_{[0,m)}, \bigvee_{i < m} \iota_{\Phi_{[0,i)}, \psi_i}\right) \interp \bot \Rightarrow \Phi_{[0,m)}, \bigvee_{i < m} \iota_{\Phi_{[0,i)}, \psi_i}\)}
        \DisplayProof
      \]
      and for \(j < m\) we define \(\tau_{s,j}\) as
      \[
        \AxiomC{\(\alpha(w_{\Phi_{[0,j)}, \psi_j})\)}
        \noLine
        \UnaryInfC{\(\psi_j, \Phi_{[0,j)} \interp \bot \Rightarrow \Phi_{[0,j)}, \iota_{\Phi_{[0,j)}, \psi_j}\)}
        \RightLabel{\(\Wk\)}
        \UnaryInfC{\(\psi_j, \left(\Phi_{[0,j)}, \bigvee_{i < m} \iota_{\Phi_{[0,i)}, \psi_i}\right) \interp \bot \Rightarrow \Phi_{[0,j)}, \left\{\iota_{\Phi_{[0,i)}, \psi_i}\right\}_{i < m}\)}
        \doubleLine
        \RightLabel{\(\veeR\)}
        \UnaryInfC{\(\psi_j, \left(\Phi_{[0,j)}, \bigvee_{i < m} \iota_{\Phi_{[0,i)}, \psi_i}\right) \interp \bot \Rightarrow \Phi_{[0,j)}, \bigvee_{i < m} \iota_{\Phi_{[0,i)}, \psi_i}\)}
        \DisplayProof
      \]
      Finally, we define \(\tau_s\) as follows
      \[
        \AxiomC{\(\tau_{s,m} \quad \cdots \quad \tau_{s,0}\)}
        \RightLabel{\(\interpIKT\)}
        \UnaryInfC{\(\phi_0 \interp \psi_0, \ldots, \phi_{m-1} \interp \psi_{m-1}, \Sigma_0, \Gamma \Rightarrow \Lambda, \Delta, \neg \iota_{\Phi_{[0,m)}, \epsilon} \interp \bigvee_{i < m} \iota_{\Phi_{[0,i)}, \psi_i}\)}
        \dashedLine
        \UnaryInfC{\(\Sigma, \Gamma \Rightarrow \Lambda, \Delta, \neg \iota_{\Phi_{[0,m)}, \epsilon} \interp \bigvee_{i < m} \iota_{\Phi_{[0,i)}, \psi_i}\)}
        \DisplayProof
      \]
      where \(\interpIKT\) was applied with order
      \[
        \phi_0 \interp \psi_0, \ldots, \phi_{m-1} \interp \psi_{m-1}
      \]
      and principal formula \(\neg \iota_{\Phi_{[0,m)}, \epsilon} \interp \bigvee_{i < m} \iota_{\Phi_{[0,i)}, \psi_i}\).

    \item Definition of \(\tau_s\) for \(s = (\langle \phi_0 \interp \psi_0, \ldots, \phi_{m-1} \interp \psi_{m-1}\rangle, \psi_m \interp \phi)\).
      Then 
      \[
        \iota_s = \bigvee_{i \leq m} \neg \left(\iota_{\Phi_{[0,i)} \cup \{\phi\}, \psi_i} \interp \neg \iota_{\Phi_{[0,i)} \cup \{\phi\},\epsilon}\right)
      \]
      Define \(\sigma_i = \iota_{\Phi_{[0,i)} \cup \{\phi\}, \psi_i} \interp \neg \iota_{\Phi_{[0,i)} \cup \{\phi\},\epsilon}\) for \(i \leq m\), so
      \[
        \iota_s = \bigvee_{i \leq m} \neg \sigma_i.
      \]
      Define the preproof \(\tau_{s,i}\) for \(i \leq m\) as
      \[
        \AxiomC{\(\alpha(w_{\Phi_{[0,i)} \cup \{\phi\}, \psi_i})\)}
        \noLine
        \UnaryInfC{\(\psi_i, \left(\Phi_{[0,i)}, \phi\right) \interp \bot \Rightarrow \Phi_{[0,i)}, \phi, \iota_{\Phi_{[0,i)} \cup \{\phi\}, \psi_i}\)}
        \RightLabel{\(\Wk\)}
        \UnaryInfC{\(\psi_i, \left(\Phi_{[0,i)}, \phi, \left\{\iota_{\Phi_{[0,j)} \cup \{\phi\}, \psi_j} \right\}_{j \leq i}\right) \interp \bot \Rightarrow \Phi_{[0,i)}, \phi, \left\{\iota_{\Phi_{[0,j)} \cup \{\phi\}, \psi_j}\right\}_{j \leq i}\)}
        \DisplayProof
      \]
      and the preproof \(\tau'_{s,i}\) for \(i \leq m\) as
      \[
        \AxiomC{\(\alpha(w_{\Phi_{[0,i)} \cup \{\phi\}, \epsilon})\)}
        \noLine
        \UnaryInfC{\(\left(\Phi_{[0,i)}, \phi\right) \interp \bot \Rightarrow \Phi_{[0,i)}, \phi, \iota_{\Phi_{[0,i)} \cup\{\phi\}, \epsilon}\)}
        \RightLabel{\(\negL\)}
        \UnaryInfC{\(\neg \iota_{\Phi_{[0,i)} \cup\{\phi\}, \epsilon},\left(\Phi_{[0,i)}, \phi\right) \interp \bot \Rightarrow \Phi_{[0,i)}, \phi\)}
        \RightLabel{\(\Wk\)}
        \UnaryInfC{\(\neg \iota_{\Phi_{[0,i)} \cup\{\phi\}, \epsilon},\left(\Phi_{[0,i)}, \phi, \left\{\iota_{\Phi_{[0,j)} \cup \{\phi\}, \psi_j} \right\}_{j < i}\right) \interp \bot \Rightarrow \Phi_{[0,i)}, \phi, \left\{\iota_{\Phi_{[0,j)} \cup \{\phi\}, \psi_j}\right\}_{j < i}\)}
        \DisplayProof
      \]
      Then the desired preproof is
      \[
        \AxiomC{\(\tau_{s, m} \quad \tau'_{s,m} \quad \cdots \quad \tau_{s, 0} \quad \tau'_{s, 0}\)}
        \RightLabel{\(\interpIKT\)}
        \UnaryInfC{\(\sigma_0, \phi_0 \interp \psi_0, \sigma_1, \phi_1 \interp \psi_1, \ldots, \phi_{m-1} \interp \psi_{m-1}, \sigma_m, \Sigma_0, \Gamma \Rightarrow \psi_m \interp \phi, \Lambda_0, \Delta\)}
        \doubleLine
        \RightLabel{\(\negR\)}
        \UnaryInfC{\(\Sigma, \Gamma \Rightarrow \Lambda, \Delta,  \left\{\neg \sigma_i\right\}_{i \leq m}\)}
        \doubleLine
        \RightLabel{\(\veeR\)}
        \UnaryInfC{\(\Sigma, \Gamma \Rightarrow \Lambda, \Delta, \bigvee_{i \leq m} \neg \sigma_i\)}
        \DisplayProof
      \]
      where \(\interpIKT\) was applied with order
      \[
        \sigma_0, \phi_0 \interp \psi_0, \sigma_1, \phi_1 \interp \psi_1, \ldots, \phi_{m-1} \interp \psi_{m-1}, \sigma_m,
      \]
      and principal formula \(\psi_m \interp \phi\).

    \item Definition of \(\tau_p\). \(\tau_p\) is just an application of \(\ax\), as \(p \in \Gamma\).
    \item Definition of \(\tau_{\neg p}\).
      \(\tau_{\neg p}\) is defined as
      \[
        \AxiomC{}
        \RightLabel{\(\ax\)}
        \UnaryInfC{\(p, \Sigma, \Gamma \Rightarrow \Lambda, \Delta\)}
        \RightLabel{\(\negR\)}
        \UnaryInfC{\(\Sigma, \Gamma \Rightarrow \Lambda, \Delta, \neg p\)}
        \DisplayProof
      \]
  \end{itemize}

  Let us argue that \(\alpha(w)\) is a proof and not only a preproof.
  To each pair \(w\) node of \(T\) assign a measure \(\omega |\Gamma_w \Rightarrow \Delta_w| + \ell(w)\), where \(|\Gamma_w \Rightarrow \Delta_w|\) is the size of the sequent and \(\ell(w)\) is the length of \(w\) as a sequence of \(\mathbb{N}\).
  We notice that every corecursive call of \(\alpha\) strictly decreases this measure except in the case \(w\) is \(\ninv\).
  However, in those cases between the conclusion of the preproof and the corecursive calls an instance of \(\interpIKT\) occurs.
  This fact guarantees that any infinite branch of \(\alpha(w)\) will go through the premise of a \(\interpIKT\) rule infinitely often, as desired.
\end{proof}

\begin{lemma}
  \label{lm:second-verification}
  Let \(T\) be a interpolation template for \(\Gamma \Rightarrow \Delta\) and \(\Gamma',\Delta'\) contain formulas only in vocabulary \(V\).
  Then \(\gentzenIL \vdash \Gamma, \Gamma'\Rightarrow \Delta, \Delta'\) implies \(\gentzenIL\vdash \iota_T, \Gamma' \Rightarrow \Delta'\).
\end{lemma}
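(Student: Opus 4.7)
The plan is to construct, by corecursion on the template $T$, for each node $w$ a proof in $\auxgentzenIL$ of $\iota_w, \Gamma' \Rightarrow \Delta'$, using as input a $\gentzenIL$-proof $\pi_w$ of $\Gamma_w, \Gamma' \Rightarrow \Delta_w, \Delta'$ (with $\Gamma_w \Rightarrow \Delta_w$ the sequent labelling $w$). Applying this at the root of $T$ and translating the resulting $\auxgentzenIL$-proof into $\gentzenIL$ yields the claim. The case analysis follows the rule attached to $w$, dually to the construction in Lemma~\ref{lm:first-verification}.

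For the axiomatic cases $\ax$ and $\botL$, $\iota_w = \bot$, so the goal follows directly by $\botL$. For $\emp$, $\iota_w = \top$ and weakening on $\pi_w$ (which already proves $\Gamma' \Rightarrow \Delta'$) suffices. In the $\toL$ case $\iota_w = \iota_0 \vee \iota_1$, so one applies $\veeL$ to split into two subgoals and feeds each corecursive call the corresponding hypothesis obtained by invertibility of $\toL$ on $\pi_w$. The remaining structural cases ($\Wk$, $\botR$, $\toR$) keep $\iota_w$ unchanged and use invertibility of the matching rule on $\pi_w$ to construct the hypothesis of a single corecursive call. Finally, the $\rep$ case replaces $\chi_w$ with $\iota_{w^\circ}$ via the $\equiv$ rule of $\auxgentzenIL$ (justified by $\IL \vdash \chi_w \leftrightarrow \iota_{w^\circ}$) and loops back to $w^\circ$, producing the cycles of the resulting non-wellfounded proof.

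The hardest part is the modal case $\ninv$, where $w$ has a saturated sequent $\Sigma, \Gamma \Rightarrow \Lambda, \Delta$ with $\Sigma, \Lambda$ multisets of $\interp$-formulas and $\Gamma, \Delta$ multisets of propositional variables. By Corollary~\ref{tm:gentzenILcut-to-gentzenIL} one may assume $\pi_w$ is cut-free, and by a subinduction using invertibility of the propositional rules one may further assume that $\Gamma', \Delta'$ contain only atoms and $\interp$-formulas. The axiomatic and cyclic termination conditions of $T$ force $\Gamma \cap \Delta = \varnothing$ and $\bot \notin \Gamma \cup \Delta$, so the last rule of the reduced $\pi_w$ is either an axiom (whose matching propositional variable must then lie in $V$, making $\iota_w, \Gamma' \Rightarrow \Delta'$ itself axiomatic after $\wedgeL$ and $\negL$ decompose $\iota_w$) or an instance of $\interpIKT$. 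In the latter case the choice of principal formula (either in $\Lambda$ or in $\Delta'$) together with the sub-ordering of antecedent $\interp$-formulas drawn from $\Sigma$ determines a unique element $s \in S$; the remaining antecedent $\interp$-formulas of the ordering are drawn from $\Gamma'$ and form an ``extras'' part. Each premise of the last $\interpIKT$ whose pivot comes from $\Sigma$ or $\Lambda$ matches the premise sequent of a template child $w_{\Phi, \psi^\epsilon}$; one corecurses there to obtain proofs of $\iota_{\Phi, \psi^\epsilon}, \Gamma'' \Rightarrow \Delta''$ for suitably restricted $\Gamma'', \Delta''$. These proofs are then assembled by decomposing $\iota_w$ through $\wedgeL$ and $\negL$, and by a fresh $\interpIKT$ in the output whose ordering interleaves the $\Gamma'$-extras with the formulas appearing in the $\interp$-shape of $\iota_s$, together with the atomic data from $\Gamma \cap V$ and $\Delta \cap V$. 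Progress is automatic since each modal case adds a fresh $\interpIKT$ in the output, so every infinite branch of the output projects to an infinite branch of $T$ that passes through $\ninv$ infinitely often.
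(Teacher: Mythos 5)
Your high-level plan matches the paper exactly: define a corecursive function that, from a node $w$ of $T$ together with a $\gentzenIL$-proof of $\Gamma_w,\Gamma'\Rightarrow\Delta_w,\Delta'$, produces an $\auxgentzenIL$-proof of $\iota_w,\Gamma'\Rightarrow\Delta'$, then translate $\auxgentzenIL$ into $\gentzenIL$. The propositional cases and the $\rep$-case via $\equiv$ are handled the same way as the paper, and your organisational choice of \emph{preprocessing} the side formulas $\Gamma',\Delta'$ by inversion before entering the $\ninv$-case is a legitimate reshuffling of what the paper instead treats as subcases within $\ninv$ (the paper corecurses on the same node $w$ when the last rule of $\pi$ is a propositional rule acting on $\Gamma'$ or $\Delta'$). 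The progress argument at the end is also the right one.

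Where the proposal falls short is precisely in the $\interpIKT$-subcase of $\ninv$, which is the technical core of the lemma, and there the sketch is both underspecified and in two places inaccurate. First, the claim that the principal formula together with the $\Sigma$-sub-ordering ``determines a unique element $s\in S$'' holds only when the principal formula of $\pi$'s last $\interpIKT$ lies in $\Lambda$; when it lies in $\Delta'$, the construction must use \emph{several} conjuncts $\iota_{s_y}$ of $\iota_w$ (one $s_y$ for each $\Gamma'$-position in the ordering, recording the $\Sigma$-prefix that precedes it), and the new $\interpIKT$ in the output is applied to $\iota_{s_0},\ldots,\iota_{s_n},\Gamma'\Rightarrow\Delta'$ with an ordering that interleaves the $\iota_{s_y}$'s with the $\Gamma'$-extras; this also forces a use of $\Ctr$ afterwards. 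Second, and more importantly, the claim that only premises ``whose pivot comes from $\Sigma$ or $\Lambda$'' feed corecursive calls is wrong: premises of the last $\interpIKT$ whose pivot comes from a $\Gamma'$-formula must \emph{also} be corecursed on, using the $\epsilon$-children $w_{\Phi,\epsilon}$ of the template node, because the interpolant formulas $\neg\iota_{\Phi,\epsilon}$ appear in the context of the premises of the new $\interpIKT$ in the output and need proofs drawn from those $\Gamma'$-pivot premises of $\pi$. Without these $\epsilon$-corecursions the assembly in both subcases (principal in $\Lambda$ and principal in $\Delta'$) cannot be closed, so as written the proposal has a genuine gap in the construction of the premises of the new $\interpIKT$.
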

\begin{proof}
  Given a node \(w\) of \(T\) let us write \(\Gamma_w \Rightarrow \Delta_w\) for the sequent at \(w\) in \(T\).
  We are going to define a function \(\beta\) that given a node \(w\) of \(T\) and a proof \(\pi \vdash \Gamma_w, \Gamma' \Rightarrow \Delta_w, \Delta'\) in \(\gentzenIL\) returns a proof in \(\auxgentzenIL\) of \(\iota_w, \Gamma' \Rightarrow \Delta'\).
  We define \(\beta\) corecurisvely in such a way that by definition \(\beta(w,\pi)\) is a preproof of \(\iota_w, \Gamma' \Rightarrow \Delta'\).
  Later, we will argue that this preproof is indeed a proof.
  We proceed by cases on the shape of \(w\).

  Case \(w\) is \(\ax\).
  Then \(w\) and \(\pi\) are of shape
  \[
    \AxiomC{}
    \RightLabel{\(\ax\)}
    \UnaryInfC{\(\bot : p, \Gamma \Rightarrow p, \Delta\)}
    \DisplayProof
    \qquad
    \AxiomC{\(\pi\)}
    \noLine
    \UnaryInfC{\(p, \Gamma, \Gamma' \Rightarrow p, \Delta, \Delta'\)}
    \DisplayProof
  \]
  where \(\Gamma_w = p, \Gamma\), \(\Delta_w = p, \Delta\) and \(\rho_w = \bot\).
  Then \(\iota_w = \bot\) and the desired proof is
  \[
    \AxiomC{}
    \RightLabel{\(\botL\)}
    \UnaryInfC{\(\bot, \Gamma' \Rightarrow \Delta'\)}
    \DisplayProof
  \]

  Case \(w\) is \(\botL\).
  Then \(w\) and \(\pi\) are of shape
  \[
    \AxiomC{}
    \RightLabel{\(\botL\)}
    \UnaryInfC{\(\bot : \bot, \Gamma \Rightarrow \Delta\)}
    \DisplayProof
    \qquad
    \AxiomC{\(\pi\)}
    \noLine
    \UnaryInfC{\(\bot, \Gamma, \Gamma' \Rightarrow \Delta, \Delta'\)}
    \DisplayProof
  \]
  where \(\Gamma_w = \bot, \Gamma\), \(\Delta_w = \Delta\) and \(\rho_w = \bot\).
  Then \(\iota_w = \bot\) and the desired proof is
  \[
    \AxiomC{}
    \RightLabel{\(\botL\)}
    \UnaryInfC{\(\bot, \Gamma' \Rightarrow \Delta'\)}
    \DisplayProof
  \]

  Case \(w\) is \(\emp\).
  Then \(w\) and \(\pi\) are of shape
  \[
    \AxiomC{}
    \RightLabel{\(\emp\)}
    \UnaryInfC{\(\top :  {\Rightarrow }\)}
    \DisplayProof
    \qquad
    \AxiomC{\(\pi\)}
    \noLine
    \UnaryInfC{\(\Gamma' \Rightarrow \Delta'\)}
    \DisplayProof
  \]
  where \(\Gamma_w = \varnothing\), \(\Delta_w = \varnothing\) and \(\rho_w = \top\).
  Then \(\iota_w = \top\) and the desired proof is
  \[
    \AxiomC{\(\pi\)}
    \noLine
    \UnaryInfC{\(\Gamma' \Rightarrow \Delta'\)}
    \RightLabel{\(\Wk\)}
    \UnaryInfC{\(\top, \Gamma' \Rightarrow \Delta'\)}
    \DisplayProof
  \]

  Case \(w\) is \(\rep\).
  Then \(w\) and \(\pi\) are of shape
  \[
    \AxiomC{}
    \RightLabel{\(\rep\)}
    \UnaryInfC{\(x_w :  \Gamma \Rightarrow \Delta\)}
    \DisplayProof
    \qquad
    \AxiomC{\(\pi\)}
    \noLine
    \UnaryInfC{\(\Gamma, \Gamma' \Rightarrow \Delta, \Delta'\)}
    \DisplayProof
  \]
  where \(\Gamma_w = \Gamma\), \(\Delta_w = \Delta\), \(\rho_w = x_w\) and the cyclic companion of \(w\), \(w^\circ\) has sequent \(\Gamma \Rightarrow \Delta\).
  Let \(x_w \mapsto \chi_w\) be the solution of \(\mathcal{E}_T\), so \(\iota_w = \chi_w\) and \(\IL \vdash \chi_w \leftrightarrow \iota_{w^\circ}\).
  Then the desired preproof is
  \[
    \AxiomC{\(\beta(w^\circ, \pi)\)}
    \noLine
    \UnaryInfC{\(\iota_{w^\circ}, \Gamma' \Rightarrow \Delta'\)}
    \RightLabel{\(\equiv\)}
    \UnaryInfC{\(\chi_w, \Gamma' \Rightarrow \Delta'\)}
    \DisplayProof
  \]

  Case \(w\) is \(\Wk\).
  Then \(w\) and \(\pi\) are of shape
  \[
    \AxiomC{\(w0\)}
    \noLine
    \UnaryInfC{\(\rho : \Gamma^s \Rightarrow \Delta^s\)}
    \RightLabel{\(\Wk\)}
    \UnaryInfC{\(\rho : \Gamma \Rightarrow \Delta\)}
    \DisplayProof
    \qquad
    \AxiomC{\(\pi\)}
    \noLine
    \UnaryInfC{\(\Gamma, \Gamma' \Rightarrow \Delta, \Delta'\)}
    \DisplayProof
  \]
  where \(\Gamma_w = \Gamma\), \(\Delta_w = \Delta\).
  We apply contraction to obtain a proof \(\pi' \vdash \Gamma^s, \Gamma' \Rightarrow \Delta^s, \Delta'\).
  The desired preproof is
  \[
    \AxiomC{\(\beta(w0, \pi')\)}
    \noLine
    \UnaryInfC{\(\iota, \Gamma' \Rightarrow \Delta'\)}
    \RightLabel{\(\Eq\)}
    \UnaryInfC{\(\iota, \Gamma' \Rightarrow \Delta'\)}
    \DisplayProof
  \]

  Case \(w\) is \(\botR\).
  Then \(w\) and \(\pi\) are of shape
  \[
    \AxiomC{\(w0\)}
    \noLine
    \UnaryInfC{\(\rho : \Gamma \Rightarrow \Delta\)}
    \RightLabel{\(\botR\)}
    \UnaryInfC{\(\rho : \Gamma \Rightarrow \Delta, \bot\)}
    \DisplayProof
    \qquad
    \AxiomC{\(\pi\)}
    \noLine
    \UnaryInfC{\(\Gamma, \Gamma' \Rightarrow \Delta, \bot, \Delta'\)}
    \DisplayProof
  \]
  where \(\Gamma_w = \Gamma\), \(\Delta_w = \Delta, \bot\).
  We apply invertibility of \(\botR\) to obtain a proof \(\pi' \vdash \Gamma, \Gamma' \Rightarrow \Delta, \Delta'\).
  The desired preproof is
  \[
    \AxiomC{\(\beta(w0, \pi')\)}
    \noLine
    \UnaryInfC{\(\iota, \Gamma' \Rightarrow \Delta'\)}
    \RightLabel{\(\Eq\)}
    \UnaryInfC{\(\iota, \Gamma' \Rightarrow \Delta'\)}
    \DisplayProof
  \]

  Case \(w\) is \(\toL\).
  Then \(w\) and \(\pi\) are of shape
  \[
    \AxiomC{\(w0\)}
    \noLine
    \UnaryInfC{\(\rho_0 : \Gamma \Rightarrow \phi, \Delta\)}
    \AxiomC{\(w1\)}
    \noLine
    \UnaryInfC{\(\rho_1 : \psi, \Gamma \Rightarrow \Delta\)}
    \RightLabel{\(\botR\)}
    \BinaryInfC{\(\rho_0 \vee \rho_1 : \phi \to \psi, \Gamma \Rightarrow \Delta\)}
    \DisplayProof
    \qquad
    \AxiomC{\(\pi\)}
    \noLine
    \UnaryInfC{\(\phi \to \psi, \Gamma, \Gamma' \Rightarrow \Delta, \Delta'\)}
    \DisplayProof
  \]
  where \(\Gamma_w = \phi \to \psi, \Gamma\), \(\Delta_w = \Delta\).
  We apply invertibility of \(\toL\) to obtain proofs \(\pi'_0 \vdash \Gamma, \Gamma' \Rightarrow \phi, \Delta, \Delta'\) and \(\pi'_1 \vdash  \psi, \Gamma, \Gamma' \Rightarrow \Delta, \Delta'\).
  The desired preproof is
  \[
    \AxiomC{\(\beta(w0, \pi'_0)\)}
    \noLine
    \UnaryInfC{\(\iota_0, \Gamma' \Rightarrow \Delta'\)}
    \AxiomC{\(\beta(w1, \pi'_1)\)}
    \noLine
    \UnaryInfC{\(\iota_1, \Gamma' \Rightarrow \Delta'\)}
    \RightLabel{\(\veeL\)}
    \BinaryInfC{\(\iota_0 \vee \iota_1, \Gamma' \Rightarrow \Delta'\)}
    \DisplayProof
  \]

  Case \(w\) is \(\toR\).
  Then \(w\) and \(\pi\) are of shape
  \[
    \AxiomC{\(w0\)}
    \noLine
    \UnaryInfC{\(\rho : \phi, \Gamma \Rightarrow \Delta, \psi\)}
    \RightLabel{\(\toR\)}
    \UnaryInfC{\(\rho : \Gamma \Rightarrow \Delta, \phi \to \psi\)}
    \DisplayProof
    \qquad
    \AxiomC{\(\pi\)}
    \noLine
    \UnaryInfC{\(\Gamma, \Gamma' \Rightarrow \Delta, \phi \to \psi, \Delta'\)}
    \DisplayProof
  \]
  where \(\Gamma_w = \Gamma\), \(\Delta_w = \Delta, \phi \to \psi\).
  We apply invertibility of \(\toR\) obtaining a proof \(\pi' \vdash \phi, \Gamma, \Gamma' \Rightarrow \Delta, \psi, \Delta'\).
  The desired proof is
  \[
    \AxiomC{\(\beta(w0, \pi')\)}
    \noLine
    \UnaryInfC{\(\iota, \Gamma' \Rightarrow \Delta'\)}
    \RightLabel{\(\Eq\)}
    \UnaryInfC{\(\iota, \Gamma' \Rightarrow \Delta'\)}
    \DisplayProof
  \]

  Case \(w\) is \(\ninv\).
  Then \(w\) is of shape
  \[
    \AxiomC{\(\left[
        \begin{matrix}
          w_{\Phi, \psi^\epsilon} \\
          \rho_{\Phi, \psi^\epsilon} : \psi^\epsilon, \Phi \interp \bot \Rightarrow \Phi
        \end{matrix}
    \right]_{\Phi, \psi^\epsilon}\)}
    \RightLabel{\(\ninv\)}
    \UnaryInfC{\(\rho : \Sigma, \Gamma \Rightarrow \Lambda, \Delta\)}
    \DisplayProof
  \]
  where \(\Gamma_w = \Sigma, \Gamma\), \(\Delta_w = \Lambda, \Delta\), \(\rho_w = \rho\), \(\Sigma, \Lambda\) are multisets of \(\interp\)-formulas, \(\Gamma, \Delta\) are multisets of propositional variables.

  By definition we know that \(\rho = \left(\bigwedge_{s \in S} \rho_s\right) \wedge  \bigwedge (\Gamma \cap V) \bigwedge \neg (\Delta \cap V)\), where \(S\) is the set of ordered pairs of a sequence of \(\Sigma\) and either a formula in \(\Lambda\) or \(\epsilon\).
  So \(\iota_w = \left(\bigwedge_{s \in S} \iota_s\right) \wedge  \bigwedge (\Gamma \cap V) \bigwedge \neg (\Delta \cap V)\).
  We proceed by cases analysis in the last rule applied to \(\pi\).
  
  Subcase last rule of \(\pi\) is \(\ax\).
  Then \(\pi\) is of shape
  \[
    \AxiomC{}
    \RightLabel{\(\ax\)}
    \UnaryInfC{\(\Sigma, \Gamma, \Gamma' \Rightarrow \Lambda, \Delta, \Delta'\)}
    \DisplayProof
  \]
  where a propositional variable \(p\) must appear on the left side and right side of the sequent.
  It cannot be the case that \(p \in \Gamma \cap \Delta\), since then \(w\) would have rule \(\ax\) instead of \(\ninv\).
  This leaves three options left:
  \begin{enumerate}
    \item If \(p \in \Gamma \cap \Delta'\)  then \(p \in V\) as the vocabulary of \(\Delta'\) is \(V\), so \(p\) is a conjunct of \(\iota\).
      The desired proof is
      \[
        \AxiomC{}
        \RightLabel{\(\ax\)}
        \UnaryInfC{\(p, \Gamma' \Rightarrow \Delta'\)}
        \doubleLine
        \RightLabel{\(\Wk + \wedgeL\)}
        \UnaryInfC{\(\iota, \Gamma' \Rightarrow \Delta'\)}
        \DisplayProof
      \]
    \item If \(p \in \Gamma' \cap \Delta\) then \(p \in V\) as the vocabulary of \(\Gamma'\) is \(V\), so \(\neg p\) is a conjunct of \(\iota\).
      The desired proof is
      \[
        \AxiomC{}
        \RightLabel{\(\ax\)}
        \UnaryInfC{\(\Gamma' \Rightarrow \Delta', p\)}
        \RightLabel{\(\negL\)}
        \UnaryInfC{\(\neg p, \Gamma' \Rightarrow \Delta'\)}
        \doubleLine
        \RightLabel{\(\Wk + \wedgeL\)}
        \UnaryInfC{\(\iota, \Gamma' \Rightarrow \Delta'\)}
        \DisplayProof
      \]
    \item If \(p \in \Gamma' \cap \Delta'\) then the desired proof is
      \[
        \AxiomC{}
        \RightLabel{\(\ax\)}
        \UnaryInfC{\(\iota, \Gamma' \Rightarrow \Delta'\)}
        \DisplayProof
      \]
  \end{enumerate}

  Subcase last rule of \(\pi\) is \(\botL\).
  Then \(\pi\) is of shape
  \[
    \AxiomC{}
    \RightLabel{\(\botL\)}
    \UnaryInfC{\(\Sigma, \Gamma, \Gamma' \Rightarrow \Lambda, \Delta, \Delta'\)}
    \DisplayProof
  \]
  where \(\bot \in \Gamma'\) (\(\Sigma\) consists of \(\interp\)-formulas only and \(\Gamma\) of propositional variables).
  The desired proof is
  \[
    \AxiomC{}
    \RightLabel{\(\botL\)}
    \UnaryInfC{\(\iota, \Gamma' \Rightarrow \Delta'\)}
    \DisplayProof
  \]

  Subcase last rule of \(\pi\) is \(\botR\).
  Then \(\pi\) is of shape
  \[
    \AxiomC{\(\pi_0\)}
    \noLine
    \UnaryInfC{\(\Sigma, \Gamma, \Gamma' \Rightarrow \Lambda, \Delta, \Delta'_0\)}
    \RightLabel{\(\botR\)}
    \UnaryInfC{\(\Sigma, \Gamma, \Gamma' \Rightarrow \Lambda, \Delta, \bot, \Delta'_0\)}
    \DisplayProof
  \]
  where \(\Delta' = \bot, \Delta'_0\) (as \(\bot\) cannot occur in \(\Lambda\) nor in \(\Delta\)).
  Then the desired proof is
  \[
    \AxiomC{\(\beta(w,\pi_0)\)}
    \noLine
    \UnaryInfC{\(\iota, \Gamma' \Rightarrow \Delta'_0\)}
    \RightLabel{\(\botR\)}
    \UnaryInfC{\(\iota, \Gamma' \Rightarrow \bot, \Delta'_0\)}
    \DisplayProof
  \]

  Subcase last rule of \(\pi\) is \(\toL\).
  Then \(\pi\) is of shape
  \[
    \AxiomC{\(\pi_0\)}
    \noLine
    \UnaryInfC{\(\Sigma, \Gamma, \Gamma'_0 \Rightarrow \Lambda, \Delta, \phi', \Delta'\)}
    \AxiomC{\(\pi_0\)}
    \noLine
    \UnaryInfC{\(\Sigma, \Gamma, \psi', \Gamma'_0 \Rightarrow \Lambda, \Delta, \Delta'\)}
    \RightLabel{\(\toL\)}
    \BinaryInfC{\(\Sigma, \Gamma, \phi' \to \psi', \Gamma'_0 \Rightarrow \Lambda, \Delta, \Delta'\)}
    \DisplayProof
  \]
  where \(\Gamma' = \phi' \to \psi', \Gamma'_0\) (as an implication cannot occur in \(\Sigma\) nor in \(\Gamma\)).
  Then the desired proof is
  \[
    \AxiomC{\(\beta(w,\pi_0)\)}
    \noLine
    \UnaryInfC{\(\iota,  \Gamma'_0 \Rightarrow \phi', \Delta'\)}
    \AxiomC{\(\beta(w,\pi_0)\)}
    \noLine
    \UnaryInfC{\(\iota, \psi', \Gamma'_0 \Rightarrow  \Delta'\)}
    \RightLabel{\(\toL\)}
    \BinaryInfC{\(\iota, \phi' \to \psi', \Gamma'_0 \Rightarrow  \Delta'\)}
    \DisplayProof
  \]

  Subcase last rule of \(\pi\) is \(\toR\).
  Then \(\pi\) is of shape
  \[
    \AxiomC{\(\pi_0\)}
    \noLine
    \UnaryInfC{\(\Sigma, \Gamma, \phi', \Gamma' \Rightarrow \Lambda, \Delta, \psi', \Delta'_0\)}
    \RightLabel{\(\botR\)}
    \UnaryInfC{\(\Sigma, \Gamma, \Gamma' \Rightarrow \Lambda, \Delta, \phi' \to \psi', \Delta'_0\)}
    \DisplayProof
  \]
  where \(\Delta' = \phi' \to \psi', \Delta'_0\) (as an implication cannot occur in \(\Lambda\) nor in \(\Delta\)).
  Then the desired proof is
  \[
    \AxiomC{\(\beta(w,\pi_0)\)}
    \noLine
    \UnaryInfC{\(\iota, \phi', \Gamma' \Rightarrow \psi', \Delta'_0\)}
    \RightLabel{\(\toR\)}
    \UnaryInfC{\(\iota, \Gamma' \Rightarrow \phi' \to \psi', \Delta'_0\)}
    \DisplayProof
  \]

  Subcase last rule of \(\pi\) is \(\interpIKT\).
  Then \(\pi\) is of shape
  \[
    \AxiomC{\(\left[
        \begin{matrix}
          \pi_i \\
          \psi_i, (\Phi_{[0,i)}, \phi) \interp \bot \Rightarrow \Phi_{[0,i)}, \phi
        \end{matrix}
    \right]_{i \leq k}\)}
    \RightLabel{\(\interpIKT\)}
    \UnaryInfC{\(\Sigma, \Gamma, \Gamma' \Rightarrow \Lambda, \Delta, \Delta'\)}
    \DisplayProof
  \]
  where \(\interpIKT\) has been applied with ordering \(\phi_0 \interp \psi_0, \ldots, \phi_{k-1} \interp \psi_{k-1}\) and principal formula \(\psi_{k} \interp \phi\).
  We know that each \(\phi_i \interp \psi_i\) belongs to \(\Sigma \cup \Gamma'\).
  We can divide the sequence in two as
  \begin{align*}
    &\phi_{i_0} \interp \psi_{i_0}, \ldots, \phi_{i_{m-1}} \interp \psi_{i_{m-1}}, \\
    &\phi_{j_0} \interp \psi_{j_0}, \ldots, \phi_{j_{n-1}} \interp \psi_{j_{n-1}},
  \end{align*}
  such that
  \begin{enumerate}
    \item if \(x' < x < m\) then \(i_{x'} < i_x\) and if \(y' < y < n\) then \(j_{y'} < j_y\),
    \item \(\langle \phi_{i_x} \interp \psi_{i_x}\rangle_{x < m}\) is a sequence of \(\Sigma\)  and \(\langle \phi_{j_y} \interp \psi_{j_y}\rangle_{y < n}\) is a sequence of \(\Gamma'\).
      We note that we take into account repetitions, i.e., if \(\phi_{i_{x'}} \interp \psi_{i_{x'}}\) cannot occur more in the sequence \(\langle \phi_{i_x} \interp \psi_{i_x}\rangle_{x < m}\) than it occurs in \(\Sigma\).
      Similarly for the sequence \(\langle \phi_{j_y} \interp \psi_{j_y}\rangle_{y < n}\).
  \end{enumerate}
  We will need the following auxiliary definitons 
  \begin{align*}
    &\Phi^i_{I} = \{\phi_{i_x} \mid x \in I\}, &&\Phi^j_{I} = \{\phi_{j_y} \mid y \in I\}, \\
    &x_y = \max \left(\{x < m \mid i_x < j_y\} \cup \{-1\}\right),
    &&y_x = \max \left(\{y < n \mid j_y < i_x\} \cup \{-1\}\right).
  \end{align*}
  In words, \(x_y\) is just the biggest \(x\) such that \(\phi_{i_x} \interp \psi_{i_x}\) occurs before \(\phi_{j_y} \interp \psi_{j_y}\) in the original order, or \(-1\) in such a \(x\) does not exists.
  For \(y_x\) the situation is analogous.
  This definitions allow us given a set \(\Phi_{[0,i)}\) split it into its \(i_x\)-part and its \(j_x\)-part.
  More precisely, for \(x < m\) we have that \(\Phi_{[0,i_x)} = \Phi^i_{[0,x)}, \Phi^j_{[0,y_x]}\) and for \(y < n\) we have that \(\Phi_{[0,j_y)} = \Phi^i_{[0,x_y]}, \Phi^j_{[0,y)}\).
  Finally, we can turn to the definition of the preproof \(\beta(w,\pi)\). 
  It depends on \(\psi_m \interp \phi \in \Lambda\) or not.
  \begin{itemize}
    \item Assume \(\psi_k \interp \phi \in \Lambda\).
      In this case we define \(\psi_{i_m} = \psi_k\) and \(s = (\langle \phi_{i_x} \interp \psi_{i_x} \rangle_{x < m}, \psi_{i_m} \interp \phi)\).
      The desired preproof is
      \[
        \AxiomC{\(\cdots\)}
        \AxiomC{\(\tau^x_{y_x + 1} \quad \cdots \quad \tau^x_0\)}
        \RightLabel{\(\interpIKT\)}
        \UnaryInfC{\(\Gamma' \Rightarrow \Delta', \iota_{\Phi^i_{[0,x)} \cup \{\phi\}, \psi_{i_x}} \interp \neg \iota_{\Phi^i_{[0,x)} \cup \{\phi\}, \epsilon}\)}
        \UnaryInfC{\(\neg (\iota_{\Phi^i_{[0,x)} \cup \{\phi\}, \psi_{i_x}} \interp \neg \iota_{\Phi^i_{[0,x)} \cup \{\phi\}, \epsilon}), \Gamma' \Rightarrow \Delta'\)}
        \AxiomC{\(\cdots\)}
        \doubleLine
        \RightLabel{\(\veeL\)}
        \TrinaryInfC{\(\bigvee_{x \leq m} \neg (\iota_{\Phi^i_{[0,x)} \cup \{\phi\}, \psi_{i_x}} \interp \neg \iota_{\Phi^i_{[0,x)} \cup \{\phi\}, \epsilon}), \Gamma' \Rightarrow \Delta'\)}
        \dashedLine
        \UnaryInfC{\(\iota_s, \Gamma' \Rightarrow \Delta'\)}
        \doubleLine\RightLabel{\(\Wk + \wedgeL\)}
        \UnaryInfC{\(\iota, \Gamma' \Rightarrow \Delta'\)}
        \DisplayProof
      \]
      where the displayed \(\interpIKT\) has been applied with ordering \(\phi_{j_0} \interp \psi_{j_0}, \ldots, \phi_{j_{y_x}} \interp \psi_{j_{y_x}}\) and principal formula \(\iota_{\Phi^i_{[0,x)} \cup \{\phi\}, \psi_{i_x}} \interp \neg \iota_{\Phi^i_{[0,x)} \cup \{\phi\}, \epsilon}\).
      To fully define the preproof, we still need to define \(\tau^x_y\) for \(y \leq y_{x} + 1\).
      First, to define \(\tau^x_{y_x+1}\) we remember that
      \[
        \pi_{i_x} \vdash \psi_{i_x}, (\Phi_{[0,i_x)}, \phi) \interp \bot \Rightarrow \Phi_{[0,i_x)}, \phi,
      \]
      or equivalently
      \[
        \pi_{i_x} \vdash \psi_{i_x}, (\Phi^i_{[0,x)}, \Phi^j_{[0,y_x]}, \phi) \interp \bot \Rightarrow \Phi^i_{[0,x)}, \Phi^j_{[0,y_x]},\phi.
      \]
      Then definition of \(\tau^x_{y_x + 1}\) is
      \[
        \AxiomC{\(\beta(w_{\Phi^{i}_{[0,x)} \cup \{\phi\}, \psi_{i_x}}, \pi_{i_x})\)}
        \noLine
        \UnaryInfC{\(\iota_{\Phi^{i}_{[0,x)} \cup \{\phi\}, \psi_{i_x}}, \Phi^j_{[0,y_x]} \interp \bot \Rightarrow \Phi^j_{[0,y_x]}\)}
        \RightLabel{\(\Wk\)}
        \UnaryInfC{\(\iota_{\Phi^{i}_{[0,x)} \cup \{\phi\}, \psi_{i_x}}, (\Phi^j_{[0,y_x]}, \neg \iota_{\Phi^i_{[0,x)} \cup \{\phi\}, \epsilon}) \interp \bot \Rightarrow \Phi^j_{[0,y_x]}, \neg \iota_{\Phi^i_{[0,x)} \cup \{\phi\}, \epsilon}\)}
        \DisplayProof
      \]
      Finally, we are going to define \(\tau^x_y\) for \(0 \leq y \leq y_x\).
      We remember that
      \[
        \pi_{j_y} \vdash \psi_{j_y}, (\Phi_{[0,j_y)}, \phi) \interp \bot \Rightarrow \Phi_{[0,j_y)}, \phi
      \]
      or equivalently
      \[
        \pi_{j_y} \vdash \psi_{j_y}, (\Phi^i_{[0,x_y]}, \Phi^j_{[0,y)}, \phi) \interp \bot \Rightarrow \Phi^i_{[0,x_y]}, \Phi^j_{[0,y)}, \phi.
      \]
      We notice that \(\Phi^i_{[0,x_y]} \subseteq \Phi^i_{[0,x)}\) since \(0 \leq y \leq y_x\).
      By an applying of weakening we obtain a proof
      \[
        \pi'_{j_y} \vdash \psi_{j_y}, (\Phi^i_{[0,x)}, \Phi^j_{[0,y)}, \phi) \interp \bot \Rightarrow \Phi^i_{[0,x)}, \Phi^j_{[0,y)}, \phi.
      \]
      Then the definiton of \(\tau^x_y\) is
      \[
        \AxiomC{\(\beta(w_{\Phi^i_{[0,x)} \cup \{\phi\}, \epsilon}, \pi'_{j_y})\)}
        \noLine
        \UnaryInfC{\(\iota_{\Phi^i_{[0,x)} \cup \{\phi\}, \epsilon}, \psi_{j_y}, \Phi^j_{[0,y)} \interp \bot \Rightarrow \Phi^j_{[0,y)}\)}
        \RightLabel{\(\negR\)}
        \UnaryInfC{\(\psi_{j_y}, \Phi^j_{[0,y)} \interp \bot \Rightarrow \Phi^j_{[0,y)}, \neg \iota_{\Phi^i_{[0,x)} \cup \{\phi\}, \epsilon}\)}
        \RightLabel{\(\Wk\)}
        \UnaryInfC{\(\psi_{j_y}, (\Phi^j_{[0,y)}, \neg \iota_{\Phi^i_{[0,x)} \cup \{\phi\}, \epsilon}) \interp \bot \Rightarrow \Phi^j_{[0,y)}, \neg \iota_{\Phi^i_{[0,x)} \cup \{\phi\}, \epsilon}\)}
        \DisplayProof
      \]

    \item Assume \(\psi_k \interp \phi \not \in \Lambda\), so \(\psi_m \interp \phi \in \Delta'\).
      In this case we define \(\psi_{j_n} = \psi_k\) and for \(y \leq n\) define \(s_y = (\langle \phi_{i_{x}} \interp \psi_{i_{x}} \rangle_{x < x_y + 1}, \epsilon)\), i.e., \(s_y\) is the sequence of \(\phi_{i_{x}} \interp \psi_{i_x}\) that occurs before position \(j_y\) in the original sequence.
      Then
      \[
        \iota_{s_y} = \neg\iota_{\Phi^i_{[0,x_y]},\epsilon} \interp \bigvee_{x \leq x_y} \iota_{\Phi^i_{[0,x)},\psi_{i_{x}}}
      \]
      where we used that \([0,x_y] = [0, x_y + 1)\) and \(x \leq x_y\) iff \(x < x_y + 1\).
      The desired preproof is
      \[
        \AxiomC{\(\tau_{n} \quad \tau'_n \quad \cdots \quad \tau_0 \quad \tau'_0\)}
        \RightLabel{\(\interpIKT\)}
        \UnaryInfC{\(\iota_{s_{0}}, \ldots, \iota_{s_{n}}, \Gamma' \Rightarrow \Delta'\)}
        \doubleLine\RightLabel{\(\Ctr + \Wk + \wedgeL\)}
        \UnaryInfC{\(\iota, \Gamma' \Rightarrow \Delta'\)}
        \DisplayProof
      \]
      where \(\interpIKT\) has been applied with ordering \(\iota_{s_{0}}, \phi_{j_0} \interp \psi_{j_0}, \iota_{s_{1}} \phi_{j_1} \interp \psi_{j_1}, \ldots, \phi_{j_{n-1}} \interp \psi_{j_{n-1}}, \iota_{s_{n}}\) and principal formula \(\psi_{j_n} \interp \phi\).
      To fully define the preprof, we still need to define the \(\tau_y\) and \(\tau'_y\) for \(y \leq n\).
      First, to define \(\tau_y\), we remember that
      \[
        \pi_{j_y} \vdash \psi_{j_y}, (\Phi_{[0,j_y)}, \phi) \interp \bot \Rightarrow \Phi_{[0,j_y)}, \phi,
      \]
      or equivalently
      \[
        \pi_{j_y} \vdash \psi_{j_y}, (\Phi^i_{[0,x_y]}, \Phi^j_{[0,y)}, \phi) \interp \bot \Rightarrow \Phi^i_{[0,x_y]}, \Phi^j_{[0,y)}, \phi.
      \]
      Then the definition of \(\tau_y\) is
      \[
        \AxiomC{\(\beta(w_{\Phi^i_{[0,x_{y}]}, \epsilon}, \pi_{j_y})\)}
        \noLine
        \UnaryInfC{\(\iota_{\Phi^i_{[0,x_{y}]}, \epsilon}, \psi_{j_y}, \left(\Phi^j_{[0,y)}, \phi\right) \interp \bot \Rightarrow\Phi^j_{[0,y)}, \phi\)}
        \UnaryInfC{\(\psi_{j_y}, \left(\Phi^j_{[0,y)}, \phi\right) \interp \bot \Rightarrow\Phi^j_{[0,y)}, \phi, \neg \iota_{\Phi^i_{[0,x_{y}]}, \epsilon}\)}
        \RightLabel{\(\Wk\)}
        \UnaryInfC{\(\psi_{j_y}, \left(\Phi^j_{[0,y)}, \phi, \left\{\neg \iota_{\Phi^i_{[0,x_{y'}]}, \epsilon}\right\}_{y' \leq y}\right) \interp \bot \Rightarrow\Phi^j_{[0,y)}, \phi, \left\{\neg \iota_{\Phi^i_{[0,x_{y'}]}, \epsilon}\right\}_{y' \leq y}\)}
        \DisplayProof
      \]

      Finally, to define \(\tau'_y\), we remember that
      \[
        \pi_{i_x} \vdash \psi_{i_x}, (\Phi_{[0,i_x)}, \phi) \interp \bot \Rightarrow \Phi_{[0,i_x)}, \phi,
      \]
      or equivalently
      \[
        \pi_{i_x} \vdash \psi_{i_x}, (\Phi^i_{[0,x)},\Phi^j_{[0,y_x]}, \phi) \interp \bot \Rightarrow \Phi^i_{[0,x)},\Phi^j_{[0,y_x]}, \phi.
      \]
      Notice that \(\Phi^j_{[0,y_x]} \subseteq \Phi^j_{[0,y)}\) for \(x \leq x_y\).
      Then the definition of \(\tau'_y\) is
      \[
        \AxiomC{\(\cdots\)}
        \AxiomC{\(\beta(w_{\Phi^i_{[0,x)}, \psi_{i_x}}, \pi_{i_x})\)}
        \noLine
        \UnaryInfC{\(\iota_{\Phi^i_{[0,x)}, \psi_{i_x}}, \left(\Phi^j_{[0,y_x]}, \phi\right) \interp \bot \Rightarrow\Phi^j_{[0,y_x]}, \phi\)}
        \RightLabel{\(\Wk\)}
        \UnaryInfC{\(\iota_{\Phi^i_{[0,x)}, \psi_{i_x}}, \left(\Phi^j_{[0,y)}, \phi\right) \interp \bot \Rightarrow\Phi^j_{[0,y)}, \phi\)}
        \AxiomC{\(\cdots\)}
        \doubleLine\RightLabel{\(\veeL\)}
        \TrinaryInfC{\(\bigvee_{x \leq x_y} \iota_{\Phi^i_{[0,x)}, \psi_{i_x}}, \left(\Phi^j_{[0,y)}, \phi\right) \interp \bot \Rightarrow\Phi^j_{[0,y)}, \phi\)}
        \RightLabel{\(\Wk\)}
        \UnaryInfC{\(\bigvee_{x \leq x_y} \iota_{\Phi^i_{[0,x)}, \psi_{i_x}}, \left(\Phi^j_{[0,y)}, \phi, \left\{\neg \iota_{\Phi^i_{[0,x_{y'}]}, \epsilon}\right\}_{y' < y}\right) \interp \bot \Rightarrow\Phi^j_{[0,y)}, \phi, \left\{\neg \iota_{\Phi^i_{[0,x_{y'}]}, \epsilon}\right\}_{y' < y}\)}
        \DisplayProof
      \]
  \end{itemize}
  With this we finish the definition of the function \(\beta\).

  Let us argue that \(\beta(w,\pi)\) is a proof and not only a preproof.
  To each pair \(\langle w, \pi\rangle\) of a node \(T\) and a proof \(\pi\) assign a meausre \(\omega^2 |\Gamma_w \Rightarrow \Delta_w| + \omega \ell(w) + \lheight(\pi)\), where \(|\Gamma_w \Rightarrow \Delta_w|\) is the size of the sequent, \(\ell(w)\) is the length of \(w\) as a sequence of \(\mathbb{N}\) and \(\lheight(\pi)\) is the local height of \(\pi\).
  We notice that every corecursive call of \(\beta\) strictly decreases this measure except in the case \(w\) is \(\ninv\) and the last rule of \(\pi\) is \(\interpIKT\).
  However, in those cases between the conclusion of the preproof and the corecursive calls an instance of \(\interpIKT\) occurs.
  This fact guarantees that any infinite branch of \(\beta(w,\pi)\) will go through the premise of a \(\interpIKT\) rule infinitely often, as desired.
\end{proof}

\begin{theorem}
  \label{cor:uniform-interpolation-IL}
  \(\IL\) has uniform interpolation.
\end{theorem}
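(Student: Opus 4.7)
The plan is to combine the interpolation template construction with the two verification lemmas. Given a formula \(\phi\) and a vocabulary \(V \subseteq \vocab(\phi)\), I would take an interpolation template \(T\) for the sequent \(\phi \Rightarrow\) built with respect to \(V\), and set \(\iota := \iota_T\).

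To verify that \(\iota\) is indeed a uniform interpolant of \(\phi\) relative to \(V\), three conditions must be checked. For the vocabulary condition, I would argue by inspection of the defining clauses of the pre-interpolant \(\rho_w\): propositional variables enter only at applications of \(\interp^*_{\IKT}\), where they are filtered through the conjuncts \(\bigwedge(\Gamma \cap V)\) and \(\bigwedge \neg(\Delta \cap V)\); every other clause either produces a constant (\(\bot\) or \(\top\)), introduces a bound variable \(x_w\), or propagates sub-interpolants unchanged. Hence \(\vocab(\rho_w) \subseteq V \cup B\). Solving the modal equational system \(\mathcal{E}_T\) then replaces every \(x_w\) by a formula whose vocabulary is forced to lie in \(V\) by the very definition of a solution in \(\IL\), yielding \(\vocab(\iota) \subseteq V\).

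For the soundness condition, Lemma~\ref{lm:first-verification} applied to \(T\) gives \(\gentzenIL \vdash \phi \Rightarrow \iota\), and then Theorem~\ref{tm:IL-and-fgentzenILcut} together with Corollary~\ref{cr:from-gentzen-to-fgentzen} and the cut-elimination result Corollary~\ref{tm:gentzenILcut-to-gentzenIL} deliver \(\IL \vdash \phi \to \iota\). For the universal property, given \(\psi\) with \(\vocab(\psi) \subseteq V\) and \(\IL \vdash \phi \to \psi\), the same chain of equivalences first produces \(\gentzenIL \vdash \phi \Rightarrow \psi\); then Lemma~\ref{lm:second-verification} applied with \(\Gamma' = \varnothing\) and \(\Delta' = \{\psi\}\) yields \(\gentzenIL \vdash \iota \Rightarrow \psi\), whence \(\IL \vdash \iota \to \psi\). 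At the level of the theorem itself the real work has been absorbed into the verification lemmas and the soundness/universal conditions reduce to bookkeeping; the only genuine subtlety left is the vocabulary tracking through the fixpoint solution of \(\mathcal{E}_T\), which is handled cleanly by the definition of a solution.
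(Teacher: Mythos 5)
Your proof follows the same approach as the paper: take an interpolation template \(T\) for \(\phi \Rightarrow\), set \(\iota := \iota_T\), use Lemma~\ref{lm:first-verification} for the soundness condition, and Lemma~\ref{lm:second-verification} (with \(\Gamma' = \varnothing\), \(\Delta' = \{\psi\}\)) for the universal property. You are merely more explicit than the paper about the vocabulary tracking (which the paper dismisses as ``clear'') and about the chain of translations between \(\gentzenIL\) and \(\IL\), but the argument is identical in substance.
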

\begin{proof}
  Let \(T\) be a \(V\)-interpolation scheme of \(\phi \Rightarrow\) and \(\iota\) its interpolant, i.e., \(\iota = \iota_T\).
  It is clear that \(\vocab(\iota) \subseteq V\) and thanks to the first veritification property we have that \(\gentzenIL \vdash \phi \Rightarrow \iota\) so \(\IL \vdash \phi \to \iota\).
  Given \(\psi\) with \(\vocab(\psi) \subseteq V\) and \(\IL \vdash \phi \to \psi\), so we would have that \(\gentzenIL \vdash \phi \Rightarrow \psi\).
  Then, by the second veritification property, we obtain \(\gentzenIL \vdash \iota \Rightarrow \psi\) so we can conclude \(\IL \vdash \iota \to \psi\).
\end{proof}

\subsection{Uniform interpolation of \(\ILP\)}
\label{subsec:ILP}

\begin{definition}
  \(\ILP\) is obtained from \(\IL\) by adding the axiom (scheme)
  \[
    \tag{P}
    \phi \interp \psi \to \nec(\phi \interp \psi).
  \]
\end{definition}

In \cite{Areces1998} it is proven that \(\ILP\) can be strongly interpreted in \(\IL\).
Then the interpretation is used to lift the result of Craig interpolation in \(\IL\) to Craig interpolation in \(\ILP\).
Let us define the interpretation from \(\IL\) to \(\ILP\) and show that it also suffices to prove uniform interpolation in \(\ILP\) from uniform interpolation in \(\IL\).

\begin{definition}
  We define the function \((\bullet)^\sharp\) as
  \begin{align*}
    &p^\sharp := p, \\
    &\bot^\sharp := \bot, \\
    &(\phi \to \psi)^\sharp := \phi^\sharp \to \psi^\sharp, \\
    &(\phi \interp \psi)^\sharp := \dnec(\phi^\sharp \interp \psi^\sharp).
  \end{align*}
\end{definition}

Notice that then
\begin{align*}
  &(\nec \phi)^\sharp
  = (\neg \phi \interp \bot)^\sharp
  = \dnec (\neg \phi^\sharp \interp \bot)
  = \dnec\nec \phi^\sharp, \\
  &(\pos \phi)^\sharp
  = (\neg (\phi \interp \bot))^\sharp
  = \neg \dnec (\phi^\sharp \interp \bot)
  = \pos \phi^\sharp \vee \pos \pos \phi^\sharp.
\end{align*}

\begin{proposition}
  \label{prop:properties-of-interpretation}
  We have that for any formula \(\phi\)
  \begin{enumerate}
    \item \(\vocab(\phi) = \vocab(\phi^\sharp)\).
    \item \(\ILP \vdash \phi \leftrightarrow \phi^\sharp\),
    \item \(\ILP \vdash \phi\) implies \(\IL \vdash \phi^\sharp\).
  \end{enumerate}
\end{proposition}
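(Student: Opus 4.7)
The plan is to prove each of the three parts in order, with the first two by structural induction on $\phi$ and the third by induction on Hilbert derivations. Part 1 is routine: the clauses for atoms, $\bot$, and $\to$ manifestly preserve propositional variables, and the outer $\dnec$ in the $\interp$-clause introduces no new ones.

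For Part 2, I will induct on the structure of $\phi$. The only non-trivial case is $\phi \interp \psi$. From the induction hypotheses $\ILP \vdash \phi \leftrightarrow \phi^\sharp$ and $\ILP \vdash \psi \leftrightarrow \psi^\sharp$, necessitation together with Lemma~\ref{lm:equivalence} (whose derivation uses only $\IL$-axioms and hence goes through in $\ILP$) yields $\ILP \vdash (\phi \interp \psi) \leftrightarrow (\phi^\sharp \interp \psi^\sharp)$. Axiom (P) supplies $\ILP \vdash (\phi^\sharp \interp \psi^\sharp) \to \nec(\phi^\sharp \interp \psi^\sharp)$, and combining this with the trivial $\alpha \to \alpha$ gives $\ILP \vdash (\phi^\sharp \interp \psi^\sharp) \leftrightarrow \dnec(\phi^\sharp \interp \psi^\sharp)$. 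Chaining these two equivalences produces the desired $\ILP \vdash (\phi \interp \psi) \leftrightarrow (\phi \interp \psi)^\sharp$.

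For Part 3, I will induct on the length of a Hilbert proof of $\phi$ in $\ILP$ and show $\IL \vdash \alpha^\sharp$ at each step. Because $(\bullet)^\sharp$ commutes with $\to$ and fixes $\bot$, propositional tautologies translate to tautologies and modus ponens is immediate. For necessitation, if $\IL \vdash \phi^\sharp$ then necessitation yields $\nec \phi^\sharp$ and axiom (4) yields $\nec \nec \phi^\sharp$, so $\IL \vdash \dnec \nec \phi^\sharp = (\nec \phi)^\sharp$. Each modal axiom of $\ILP$ requires a short verification: the standard strategy is to extract both $\alpha$ and $\nec \alpha$ from any $\dnec\alpha$-premise, invoke the original $\IL$-axiom for the unboxed conclusion, and obtain the boxed part by applying necessitation and (K) to the original axiom. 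For example, the translation of (L) reduces to the original L\"ob axiom after observing that (4) makes $\nec \phi^\sharp \leftrightarrow \dnec \nec \phi^\sharp$, and the translation of (P), namely $\dnec(\phi^\sharp \interp \psi^\sharp) \to \dnec \nec \dnec(\phi^\sharp \interp \psi^\sharp)$, reduces to two applications of (4).

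The main obstacle will be the axiom-by-axiom bookkeeping, especially for (J4) and (J5), whose translations involve the disjunctive shape $(\pos \phi)^\sharp = \pos \phi^\sharp \vee \pos \pos \phi^\sharp$. For $(J5)^\sharp$ one has to combine $\pos \phi^\sharp \interp \phi^\sharp$ and $\pos \pos \phi^\sharp \interp \phi^\sharp$ (the latter obtained by (J5) applied to $\pos\phi^\sharp$ and (J2)) via (J3); and for $(J4)^\sharp$ one must derive $\pos \pos \phi^\sharp \to \pos \pos \psi^\sharp$ from $\nec(\phi^\sharp \interp \psi^\sharp)$ using (J4) inside a box and duality, then package both disjuncts into the disjunctive conclusion. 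These are routine but tedious, and are the only place where the shape of $(\bullet)^\sharp$ interacts non-trivially with the axioms.
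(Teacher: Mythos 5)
Your proposal is correct and follows essentially the same route as the paper: structural induction for Parts 1 and 2 (with the $\interp$-case in Part 2 handled via Lemma~\ref{lm:equivalence} and axiom (P)), and induction on Hilbert derivation length for Part 3, dispatching the propositional and modus ponens cases trivially, handling necessitation via necessitation plus (4), and verifying the translated modal axioms one by one with the only non-routine cases being (J4), (J5), and (P). Your treatments of (J5) and (P) coincide with the paper's, and your slightly different handling of (J4) (boxing (J4) to get the $\pos\pos$-part rather than first collapsing via $\pos\pos\chi\to\pos\chi$) is an equally valid minor variant.
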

\begin{proof}
  Proof of 1. Trivial by induction on \(\phi\).

  Proof of 2.
  By induction on \(\phi\), if \(\phi\) is atomic or an implication the result is trivial.
  Finally, assume that \(\phi = \phi_0 \interp \phi_1\), and by I.H.\ we have that \(\ILP \vdash \phi_i \leftrightarrow \phi^\sharp_i\) for \(i = 0,1\).
  It is easy to show that
  \[
    \ILP \vdash (\phi_0 \interp \phi_1) \leftrightarrow (\phi^\sharp_0 \interp \phi^\sharp_1).
  \]
  By propositional reasoning we have that \(\ILP \vdash \dnec(\phi^\sharp_0 \interp \phi^\sharp_1) \to (\phi^\sharp_0 \interp \phi^\sharp_1)\).
  By axiom \((P)\) we obtain the other direction so
  \[
    \ILP \vdash (\phi^\sharp_0 \interp \phi^\sharp_1) \leftrightarrow  \dnec (\phi^\sharp_0 \interp \phi^\sharp_1).
  \]
  Putting both displayed equivalences together we obtain the desired result.

  Proof of 3.
  We proceed by induction on the length of proof of \(\ILP \vdash \phi\).
  First, we prove that the translation of the axioms of \(\ILP\) are provable in \(\IL\).
  The translation of the axioms \(\mathrm{(K)}\), \(\mathrm{(4)}\), \(\mathrm{(L)}\) \((\mathrm{J1})\), \((\mathrm{J2})\), \((\mathrm{J3})\) is straightforward to show.
  Let us see how to show \(\mathrm{(J4)}\), \(\mathrm{(J5)}\) and \((\mathrm{P})\).

  The translation of \(\mathrm{(J4)}\) is \(\dnec(\phi^\sharp \interp \psi^\sharp) \to (\pos \phi^\sharp \vee \pos\pos \phi^\sharp \to \pos \psi^\sharp \vee \pos\pos\psi^\sharp)\).
  Using that for any \(\chi\), \(\IL \vdash \pos\pos \chi \to \pos\chi\) it suffices to show that \(\IL \vdash \dnec(\phi^\sharp \interp \psi^\sharp) \to (\pos \phi^\sharp \to \pos \psi^\sharp \vee \pos\pos\psi^\sharp)\), but this easily follows from axiom \(\mathrm{(J4)}\) itself.

  The translation of \(\mathrm{(J5)}\) is \(\dnec((\pos \phi^\sharp \vee \pos\pos \phi^\sharp) \interp \phi^\sharp)\).
  By necessitation it suffices to show that \(\IL \vdash(\pos \phi^\sharp \vee \pos\pos \phi^\sharp) \interp \phi^\sharp\).
  By axiom \(\mathrm{(J5)}\) we know that \(\IL \vdash \pos \phi^\sharp \interp \phi^\sharp\) and \(\IL \vdash \pos\pos \phi^\sharp \interp \pos \phi^\sharp\).
  Using axiom \(\mathrm{(J2)}\) we obtain that \(\IL \vdash \pos\pos\phi^\sharp \interp \phi^\sharp\) and then by axiom \(\mathrm{(J3)}\) we conclude the desired \(\IL \vdash (\pos \phi^\sharp \vee \pos\pos \phi^\sharp) \interp \phi^\sharp\).

  The translation of axiom \(\mathrm{(P)}\) is \(\dnec(\phi^\sharp \interp \psi^\sharp) \to \dnec \nec \dnec(\phi^\sharp \interp \psi^\sharp)\), but this is provable by using axiom \(\mathrm{(4)}\).

  Assume that \(\ILP \vdash \phi\) since there are shorter proofs of \(\ILP \vdash \psi \to \phi\) and \(\ILP \vdash \psi\).
  By the induction hypothesis we obtain that \(\IL \vdash \psi^\sharp \to \phi^\sharp\) and \(\IL \vdash \psi^\sharp\).
  We can conclude then the desired \(\IL \vdash \phi^\sharp\).

  Finally, assume that \(\phi = \nec \psi\) and there is a shorter proof of \(\ILP \vdash \psi\).
  By the induction hypothesis we have that \(\IL \vdash \psi^\sharp\), then using necessitation once we obtain \(\IL \vdash \nec \psi^\sharp\) and using it twice we obtain \(\IL \vdash \nec\nec\psi^\sharp\).
  We can conclude that \(\IL \vdash \nec \psi^\sharp \wedge \nec\nec\psi^\sharp\), i.e., \(\IL \vdash \dnec \nec \psi^\sharp.\)
\end{proof}

Thanks to the previous Lemma, we can provide uniform interpolation for \(\ILP\).

\begin{theorem}
  \(\ILP\) has uniform interpolation.
\end{theorem}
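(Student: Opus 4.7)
The plan is to leverage Theorem~\ref{cor:uniform-interpolation-IL}, i.e., uniform interpolation for \(\IL\), and transfer it to \(\ILP\) via the strong interpretation \((\bullet)^\sharp\) whose relevant properties are collected in Proposition~\ref{prop:properties-of-interpretation}. Given a formula \(\phi\) and a vocabulary \(V \subseteq \vocab(\phi)\), I would first form \(\phi^\sharp\); by item~1 of Proposition~\ref{prop:properties-of-interpretation} we have \(\vocab(\phi^\sharp) = \vocab(\phi)\), so in particular \(V \subseteq \vocab(\phi^\sharp)\). Then I would take \(\iota\) to be the \(\IL\)-uniform interpolant of \(\phi^\sharp\) with respect to \(V\) granted by Theorem~\ref{cor:uniform-interpolation-IL}, and claim that this same \(\iota\) serves as the \(\ILP\)-uniform interpolant of \(\phi\) with respect to \(V\).

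The three defining conditions for \(\iota\) to be an \(\ILP\)-uniform interpolant of \(\phi\) are then verified as follows. The vocabulary condition \(\vocab(\iota) \subseteq V\) is immediate from the choice of \(\iota\). For the implication \(\ILP \vdash \phi \to \iota\), I would combine \(\IL \vdash \phi^\sharp \to \iota\) (hence \(\ILP \vdash \phi^\sharp \to \iota\), since \(\ILP\) extends \(\IL\)) with \(\ILP \vdash \phi \to \phi^\sharp\) coming from item~2 of Proposition~\ref{prop:properties-of-interpretation}.

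For the universal property, suppose \(\psi\) satisfies \(\vocab(\psi) \subseteq V\) and \(\ILP \vdash \phi \to \psi\). Applying item~3 of Proposition~\ref{prop:properties-of-interpretation} to the formula \(\phi \to \psi\) yields \(\IL \vdash (\phi \to \psi)^\sharp\), which by definition of \((\bullet)^\sharp\) on implications is \(\IL \vdash \phi^\sharp \to \psi^\sharp\). Since \(\vocab(\psi^\sharp) = \vocab(\psi) \subseteq V\) by item~1, the uniform interpolation property of \(\iota\) in \(\IL\) gives \(\IL \vdash \iota \to \psi^\sharp\), whence \(\ILP \vdash \iota \to \psi^\sharp\). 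Invoking item~2 once more to get \(\ILP \vdash \psi^\sharp \to \psi\) and chaining the two implications, we conclude \(\ILP \vdash \iota \to \psi\), as required.

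There is no real obstacle here beyond bookkeeping: the proof is essentially a diagram chase using that \((\bullet)^\sharp\) preserves vocabulary, is an \(\ILP\)-equivalence, and takes \(\ILP\)-theorems to \(\IL\)-theorems. The only delicate point worth stressing is that one must translate both sides of the hypothesis \(\ILP \vdash \phi \to \psi\) simultaneously via \((\bullet)^\sharp\) in order to land inside \(\IL\) while still having \(\vocab(\psi^\sharp) \subseteq V\), and then translate the interpolant inequality back to \(\ILP\) using item~2. No new proof-theoretic machinery is required.
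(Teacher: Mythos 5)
Your proof is correct and follows essentially the same route as the paper: choose \(\iota\) to be the \(\IL\)-uniform interpolant of \(\phi^\sharp\) with respect to \(V\), then use item~3 of Proposition~\ref{prop:properties-of-interpretation} on \(\phi \to \psi\) to obtain \(\IL \vdash \phi^\sharp \to \psi^\sharp\), and transfer the resulting \(\IL\)-implications back to \(\ILP\) via item~2. There are no gaps, and the bookkeeping you flag (translating both sides of the hypothesis simultaneously and then returning to \(\ILP\)) is exactly the structure of the paper's argument.
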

\begin{proof}
  Let \(\phi\) be a formula and \(V\) a vocabulary.
  Define \(\iota\) as the uniform \(\IL\)-interpolant of \(\phi^\sharp\) in \(V\), which exists by Theorem~\ref{cor:uniform-interpolation-IL}.
  In particular we have that
  \begin{enumerate}
    \item \(\vocab(\iota) \subseteq V\),
    \item \(\IL \vdash \phi^\sharp \to \iota\), and
    \item for any \(\psi\) with \(\vocab(\psi) \subseteq V\) if \(\IL \vdash \phi^\sharp \to \psi\) then \(\IL \vdash \iota \to \psi\).
  \end{enumerate}
  From \(\IL \vdash \phi^\sharp \to \iota\) we obtain that \(\ILP \vdash \phi^\sharp \to \iota\).
  By Proposition~\ref{prop:properties-of-interpretation} we get \(\ILP \vdash \phi \leftrightarrow \phi^\sharp\) so \(\ILP \vdash \phi \to \iota\).

  Finally, let \(\psi\) be such that \(\vocab(\psi) \subseteq V\) and assume that \(\ILP \vdash \phi \to \psi\).
  Using Proposition~\ref{prop:properties-of-interpretation} we get that \(\IL \vdash \phi^\sharp \to \psi^\sharp\).
  Since \(\vocab(\psi^\sharp) = \vocab(\psi) \subseteq V\) we can use that \(\iota\) is the uniform \(\IL\)-interpolant of \(\phi^\sharp\) in \(V\) to obtain that \(\IL \vdash \iota \to \psi^\sharp\).
  Then \(\ILP \vdash \iota \to \psi^\sharp\) and by Proposition~\ref{prop:properties-of-interpretation} once again we get \(\ILP \vdash \iota \to \psi\), as desired.
\end{proof}

\section{Future work}
\label{sec:future-work}
There are multiple directions for future work.
First, not much has been done with respect to uniform interpolation of bimodal (an exception being \cite{kogure2023interpolationpropertiesbimodalprovability}) and unary interpretability logics.
Due to the generality of our method,  it should be possible, once the correct sequent calculi for these logics are found,  to adapt the presented results  for these logics.

In a different direction, it should be possible to extend our techniques  to show Lyndon uniform interpolation.
This will imply that multiple provability logics would have one of the strongest possible interpolation property.

Finally, it is worth noting that attempts to demonstrate uniform interpolation of~\(\IL\) by semantic methods have been unsuccessful.
Hence, our new techniques based on non-wellfounded proofs have created a gap between what can be achieved by semantic and syntactic methods.
It will be interesting to find a semantic proof of uniform interpolation and close this gap, as this would yield semantics tools that corresponds to our non-wellfounded proofs.
%
%Finally, previously it has been tried to establish the uniform interpolation of \(\IL\) via a semantical method, but it has not been possible to due so.
%This shows that the new syntactic techniques adquired by the adoption of non-wellfounded proofs have created a gap between the syntactical methods and the semantical methods.
%It would be of interest for the authors to try to close this gap, finding a semantical proof and thus restoring the balance between syntactical and semantical techniques.
%In other words, finding an analogue in the class of models of allowing non-wellfounded proofs.

\begin{appendices}
\section{Completeness of \(\fgentzenIL\)}
\label{sec:completeness-of-gil}

We show that \(\fgentzenIL\) proves all modal axioms of \(\IL\)

(K): $\Box(\phi\rightarrow\psi)\rightarrow(\Box\phi\rightarrow\Box\psi)$

\begin{prooftree}
	\AxiomC{}
  \RightLabel{Ax}
	\UnaryInfC{$\phi, \ldots \Rightarrow \phi, \ldots$}
	\AxiomC{}
  \RightLabel{Ax}
	\UnaryInfC{$\psi, \ldots \Rightarrow \psi, \ldots$}
  \RightLabel{\({\to}\mathrm{L}\)}
	\BinaryInfC{$\phi \to \psi, \phi, (\neg \psi, \neg(\phi\rightarrow\psi), \neg \phi,\bot) \interp \bot \Rightarrow  \bot, \psi$}
  \RightLabel{\(\neg\mathrm{R}\)}
  \doubleLine
	\UnaryInfC{$(\neg \psi, \neg(\phi\rightarrow\psi), \neg \phi,\bot) \interp \bot \Rightarrow \neg (\phi \to \psi), \neg \phi, \bot, \psi$}
  \RightLabel{\(\neg\mathrm{L}\)}
	\UnaryInfC{$\neg \psi, (\neg \psi, \neg(\phi\rightarrow\psi), \neg \phi,\bot) \interp \bot \Rightarrow \neg (\phi \to \psi), \neg \phi, \bot$}
	\AxiomC{}
  \RightLabel{\(\bot\mathrm{L}\)}
	\UnaryInfC{$\bot, \ldots \Rightarrow \ldots$}
	\AxiomC{}
  \RightLabel{\(\bot\mathrm{L}\)}
	\UnaryInfC{$\bot, \ldots \Rightarrow \ldots$}
  \RightLabel{\(\interp_{\IL}\)}
	\TrinaryInfC{$\neg (\phi\rightarrow\psi) \interp \bot, \neg\phi \interp \bot \Rightarrow \neg\psi \interp \bot$}
  \dashedLine
	\UnaryInfC{$\Box(\phi\rightarrow\psi), \Box\phi \Rightarrow \Box\psi$}
  \RightLabel{\({\to}\mathrm{R}\)}
  \doubleLine
	\UnaryInfC{$\Rightarrow\Box(\phi\rightarrow\psi)\rightarrow\Box\phi\rightarrow\Box\psi$}
\end{prooftree}

(4): $\Box\phi\rightarrow\Box\Box\phi$

\begin{prooftree}
	\AxiomC{}
	\RightLabel{Ax}
	\UnaryInfC{$(\neg \nec \phi, \bot, \neg \phi) \interp \bot\Rightarrow\bot,\neg\phi, \neg\phi\interp \bot$}
	\RightLabel{$\neg$L}
	\UnaryInfC{$\neg(\neg\phi\interp \bot),(\neg \nec \phi, \bot, \neg \phi) \interp \bot\Rightarrow\bot,\neg\phi$}
  \dashedLine
	\UnaryInfC{$\neg\Box\phi,(\neg \nec \phi, \bot, \neg \phi) \interp \bot\Rightarrow\bot,\neg\phi$}
	\AxiomC{}
	\RightLabel{$\bot$L}
	\UnaryInfC{$\bot, \ldots \Rightarrow \ldots$}
	\RightLabel{$\rhd_{\IL}$}
	\BinaryInfC{$\neg\phi\rhd\bot\Rightarrow\neg\Box\phi\rhd\bot$}
	\RightLabel{$\rightarrow$R}
	\UnaryInfC{$\Rightarrow\neg \phi \interp \bot\to \neg\Box\phi \interp \bot$}
  \dashedLine
	\UnaryInfC{$\Rightarrow\Box\phi\rightarrow\Box\Box\phi$}
\end{prooftree}

(L): \(\nec(\nec \phi \to \phi) \to \nec \phi\)

\begin{prooftree}
  \AxiomC{}
  \RightLabel{Ax}
  \UnaryInfC{\(\neg \phi \interp \bot, \ldots \Rightarrow \neg \phi \interp \bot, \ldots\)}
  \AxiomC{}
  \RightLabel{Ax}
  \UnaryInfC{\(\phi, \ldots \Rightarrow \phi, \ldots\)}
  \RightLabel{\({\to}\mathrm{L}\)}
  \BinaryInfC{\((\neg \phi \interp \bot) \to \phi, (\neg \phi, \neg(\nec \phi \to \phi), \bot) \interp \bot \Rightarrow  \bot, \phi\)}
  \dashedLine
  \UnaryInfC{\(\nec \phi \to \phi, (\neg \phi, \neg(\nec \phi \to \phi), \bot) \interp \bot \Rightarrow  \bot, \phi\)}
  \RightLabel{\(\neg\mathrm{R}\)}
  \UnaryInfC{\((\neg \phi, \neg(\nec \phi \to \phi), \bot) \interp \bot \Rightarrow \neg(\nec \phi \to \phi), \bot, \phi\)}
  \RightLabel{\(\neg\mathrm{L}\)}
  \UnaryInfC{\(\neg \phi, (\neg \phi, \neg(\nec \phi \to \phi), \bot) \interp \bot \Rightarrow \neg(\nec \phi \to \phi), \bot\)}
  \AxiomC{}
  \RightLabel{\(\bot\mathrm{L}\)}
  \UnaryInfC{\(\bot, \ldots \Rightarrow \ldots\)}
  \RightLabel{\(\interp_{\IL}\)}
  \BinaryInfC{\(\neg (\nec \phi \to \phi) \interp \bot \Rightarrow \neg \phi \interp \bot\)}
  \dashedLine
  \UnaryInfC{\(\nec(\nec \phi \to \phi) \Rightarrow \nec \phi\)}
  \RightLabel{\({\to}\mathrm{R}\)}
  \UnaryInfC{\( \Rightarrow \nec(\nec \phi \to \phi) \to \nec \phi\)}
\end{prooftree}

(J1): \(\nec(\phi \to \psi) \to \phi \interp \psi\)

\begin{prooftree}
  \AxiomC{}
  \RightLabel{Ax}
  \UnaryInfC{\(\phi, \ldots \Rightarrow \phi, \ldots\)}
  \AxiomC{}
  \RightLabel{Ax}
  \UnaryInfC{\(\psi, \ldots \Rightarrow \psi\)}
  \RightLabel{\({\to}\mathrm{L}\)}
  \BinaryInfC{\(\phi \to \psi, \phi, (\phi, \neg(\phi \to \psi), \psi) \interp \bot \Rightarrow \psi\)}
  \RightLabel{\(\neg\mathrm{R}\)}
  \UnaryInfC{\(\phi, (\phi, \neg(\phi \to \psi), \psi) \interp \bot \Rightarrow \neg(\phi \to \psi), \psi\)}
  \AxiomC{}
  \RightLabel{\(\bot\mathrm{L}\)}
  \UnaryInfC{\(\bot, \ldots \Rightarrow \bot\)}
  \RightLabel{\(\interp_{\IL}\)}  
  \BinaryInfC{\(\neg (\phi \to \psi) \interp \bot \Rightarrow \phi \interp \psi\)}
  \dashedLine
  \UnaryInfC{\( \nec(\phi \to \psi) \Rightarrow \phi \interp \psi\)}
  \RightLabel{\({\to}\mathrm{R}\)}
  \UnaryInfC{\( \Rightarrow\nec(\phi \to \psi) \to \phi \interp \psi\)}
\end{prooftree}

(J2): \(\phi \interp \psi \to \psi \interp \chi \to \phi \interp \chi\)

\begin{prooftree}
  \AxiomC{}
  \RightLabel{Ax}
  \UnaryInfC{\(\phi, (\phi, \phi, \psi, \chi) \interp \bot \Rightarrow \phi, \psi, \chi\)}
  \AxiomC{}
  \RightLabel{Ax}
  \UnaryInfC{\(\psi, (\psi, \psi, \chi) \interp \bot \Rightarrow \psi, \chi\)}
  \AxiomC{}
  \RightLabel{Ax}
  \UnaryInfC{\(\chi, (\chi, \chi) \interp \bot \Rightarrow \chi\)}
  \RightLabel{\(\interp_{\IL}\)}
  \TrinaryInfC{\( \phi \interp \psi, \psi \interp \chi \Rightarrow \phi \interp \chi\)}
  \RightLabel{\({\to}\mathrm{R}\)}
  \doubleLine
  \UnaryInfC{\( \Rightarrow \phi \interp \psi \to \psi \interp \chi \to \phi \interp \chi\)}
\end{prooftree}
where \(\interp_{\IL}\) has been applied with ordering \(\psi \interp \chi, \phi \interp \psi\) and main formula \(\phi \interp \chi\).

(J3): \((\phi \interp \chi) \to (\psi \interp \chi) \to (\phi \vee \psi) \interp \chi\)

\begin{prooftree}
  \AxiomC{}
  \RightLabel{Ax}
  \UnaryInfC{\(\phi, \ldots \Rightarrow \phi, \ldots\)}
  \AxiomC{}
  \RightLabel{Ax}
  \UnaryInfC{\(\psi, \ldots \Rightarrow \psi, \ldots\)}
  \RightLabel{\(\vee\mathrm{L}\)}
  \BinaryInfC{\(\phi \vee \psi, (\phi \vee \psi, \phi, \psi, \chi) \interp \bot \Rightarrow \phi, \psi, \chi\)}
  \AxiomC{}
  \RightLabel{Ax}
  \UnaryInfC{\(\chi, \ldots \Rightarrow \chi,\ldots\)}
  \AxiomC{}
  \RightLabel{Ax}
  \UnaryInfC{\(\chi, \ldots \Rightarrow \chi,\ldots\)}
  \RightLabel{\(\interp_{\IL}\)}
  \TrinaryInfC{\(\phi \interp \chi, \psi \interp \chi \Rightarrow (\phi \vee \psi) \interp \chi\)}
  \doubleLine\RightLabel{\({\to}\mathrm{R}\)}
  \UnaryInfC{\( \Rightarrow (\phi \interp \chi) \to (\psi \interp \chi) \to (\phi \vee \psi) \interp \chi\)}
\end{prooftree}

(J4): \(\phi \interp \psi \to (\pos \phi \interp \pos \psi)\)

\begin{prooftree}
  \AxiomC{}
  \RightLabel{(J3)}
  \UnaryInfC{\( \phi \interp \psi, \psi \interp \bot\Rightarrow \phi \interp \bot\)}
  \RightLabel{\(\neg\mathrm{R}\)}
  \UnaryInfC{\( \phi \interp \psi \Rightarrow \neg(\psi \interp \bot), \phi \interp \bot\)}
  \RightLabel{\(\neg\mathrm{L}\)}
  \UnaryInfC{\( \phi \interp \psi, \neg( \phi \interp \bot) \Rightarrow \neg(\psi \interp \bot)\)}
  \dashedLine
  \UnaryInfC{\( \phi \interp \psi, \pos \phi \Rightarrow \pos \psi\)}
  \RightLabel{\({\to}\mathrm{R}\)}
  \doubleLine
  \UnaryInfC{\( \Rightarrow \phi \interp \psi \to (\pos \phi \to \pos \psi)\)}
\end{prooftree}

(J5): $\Diamond\phi\rhd\phi$

\begin{prooftree}
  \AxiomC{}
  \RightLabel{Ax}
  \UnaryInfC{\((\pos \phi, \phi) \interp \bot \Rightarrow \phi, \phi \interp \bot \)}
  \RightLabel{\(\neg\mathrm{L}\)}
  \UnaryInfC{\(\neg(\phi \interp \bot), (\pos \phi, \phi) \interp \bot \Rightarrow \phi\)}
  \dashedLine
  \UnaryInfC{\(\pos \phi, (\pos \phi, \phi) \interp \bot \Rightarrow \phi\)}
	\RightLabel{$\interp_{\IL}$}
	\UnaryInfC{$\Rightarrow\pos\phi\interp\phi$}
\end{prooftree}

\end{appendices}

\printbibliography[title={Bibliography}]

\end{document}